\DeclareFontFamily{OT2}{cmr}{\hyphenchar\font45 }
\DeclareFontShape{OT2}{cmr}{m}{n}{%
   <5><6><7><8><9>gen*wncyr%
   <10><10.95><12><14.4><17.28><20.74><24.88>wncyr10}{}
\DeclareFontShape{OT2}{cmr}{b}{n}{%
   <5><6><7><8><9>gen*wncyb%
   <10><10.95><12><14.4><17.28><20.74><24.88>wncyb10}{}
\DeclareMathAlphabet{\mathcyr}{OT2}{cmr}{m}{n}
\DeclareMathAlphabet{\mathcyb}{OT2}{cmr}{b}{n}
\SetMathAlphabet{\mathcyr}{bold}{OT2}{cmr}{b}{n}
\theoremstyle{plain}
    \newtheorem{theorem}{Theorem}[section]
\theoremstyle{definition}
    \newtheorem{definition}[theorem]{Definition}
    \newtheorem{proposition}[theorem]{Proposition}
    \newtheorem{lemma}[theorem]{Lemma}
    \newtheorem{corollary}[theorem]{Corollary}
    \newtheorem{remark}[theorem]{Remark}
\newcommand{\bZ}{\mathbb{Z}}
\newcommand{\bQ}{\mathbb{Q}}
\newcommand{\bR}{\mathbb{R}}
\newcommand{\cA}{\mathcal{A}}
\newcommand{\cB}{\mathcal{B}}
\newcommand{\cF}{\mathcal{F}}
\newcommand{\cS}{\mathcal{S}}
\newcommand{\cZ}{\mathcal{Z}}
\newcommand{\frH}{\mathfrak{H}}
\newcommand{\frS}{\mathfrak{S}}
\newcommand{\frZ}{\mathfrak{Z}}
\newcommand{\sh}{\mathbin{\mathcyr{sh}}}
\newcommand{\bk}{\boldsymbol{k}}
\newcommand{\bl}{\boldsymbol{l}}
\newcommand{\pp}{\boldsymbol{p}}
\newcommand{\reg}{\mathrm{reg}}
\newcommand{\dep}{\mathrm{dep}}
\newcommand{\wt}{\mathrm{wt}}
\newcommand{\jump}[1]{\ensuremath{[\![#1]\!]} }
\address[Masataka Ono]{Global Education Center, Waseda University, 1-6-1 Nishi-Waseda, Shinjuku-ku, Tokyo, 169-8050, Japan}
\email{m-ono@aoni.waseda.jp}
\address[Kosuke Sakurada]{Seiwa Gakuen High School, 3-4-1, Kinoshita, Wakabayashi-ku, Sendai, 984-0047, Japan}
\email{sakurada.kosuke@seiwa.ac.jp}
\address[Shin-ichiro Seki]{Department of Mathematical Sciences\\ Aoyama Gakuin University\\ 5-10-1 Fuchinobe, Chuo-ku, Sagamihara-shi, Kanagawa, 252-5258, Japan}
\email{seki@math.aoyama.ac.jp}
\title{A note on $\cF_n$-multiple zeta values}
\author{Masataka Ono, Kosuke Sakurada and Shin-ichiro Seki}
\thanks{This research was supported in part by JSPS KAKENHI Grant Numbers JP16H06336 and 18J00151.}
\begin{document}
\maketitle
\begin{abstract}
For several evaluations of special values and several relations known only in $\cA_n$-multiple zeta values or $\cS_n$-multiple zeta values, we prove that they are uniformly valid in $\cF_n$-multiple zeta values for both the case where $\cF=\cA$ and $\cF=\cS$.
In particular, the Bowman--Bradley type theorem and sum formulas for $\cS_2$-multiple zeta values are proved.
\end{abstract}
\section{Introduction}
We call a tuple of positive integers $\bk=(k_1,\dots,k_r)$ an \emph{index}. We call $\wt(\bk)\coloneqq k_1+\cdots+k_r$ (resp.~$\dep(\bk)\coloneqq r$) the \emph{weight} (resp.~\emph{depth}) of $\bk$.
If the condition $k_r\geq 2$ is satisfied, then we state that the index $\bk=(k_1,\dots, k_r)$ is \emph{admissible}.
For an admissible index $\bk=(k_1,\dots, k_r)$, the \emph{multiple zeta value} (MZV) $\zeta(\bk)$ and the \emph{multiple zeta-star value} (MZSV) $\zeta^{\star}(\bk)$ are defined by 
\begin{align*}
\zeta(\bk)
\coloneqq \sum_{0<n_1<\cdots<n_r}\frac{1}{n^{k_1}_1\cdots n^{k_r}_r}, \qquad 
\zeta^\star(\bk)\coloneqq \sum_{1\leq n_1\leq \cdots \leq n_r}\frac{1}{n^{k_1}_1\cdots n^{k_r}_r}.
\end{align*}
These series are convergent.
We set $\zeta(\varnothing)=\zeta^\star(\varnothing)=1$ for the empty index $\varnothing$ (= the empty tuple). 

First we recall the definition of \emph{$\cA_n$-multiple zeta(-star) values} ($\cA_n$-MZ(S)Vs) introduced by Rosen; see \cite{Ro1,Se2}.
For a positive integer $n$, set
\begin{align*}
\cA_n\coloneqq \left.\prod_p\bZ/p^n\bZ\right/\bigoplus_p\bZ/p^n\bZ,
\end{align*}
where $p$ runs over all prime numbers.
For an index $\bk=(k_1, \ldots, k_r)$, the $\cA_n$-MZV $\zeta_{\cA_n}^{}(\bk)$ and the $\cA_n$-MZSV $\zeta_{\cA_n}^{\star}(\bk)$ are defined by
\begin{align*}
\zeta^{}_{\cA_n}(\bk)\coloneqq \left(\sum_{0<n_1<\cdots<n_r<p}\frac{1}{n^{k_1}_1\cdots n^{k_r}_r} \bmod{p^n}\right)_p, 
\end{align*}
\begin{align*}
\zeta^\star_{\cA_n}(\bk)\coloneqq \left(\sum_{1\leq n_1\leq \cdots\leq n_r\leq p-1}\frac{1}{n^{k_1}_1\cdots n^{k_r}_r} \bmod{p^n}\right)_p 
\end{align*}
as elements of $\cA_n$.
We also set $\zeta^{}_{\cA_n}(\varnothing)=\zeta^\star_{\cA_n}(\varnothing)=1$.

Next we recall the definition of \emph{$t$-adic symmetric multiple zeta values} ($\widehat{\cS}$-MZVs) introduced by Jarossary \cite{J2}.
Let $t$ be an indeterminate.
For $\bullet \in \{*, \sh\}$ and an index $\bk=(k_1,\dots, k_r)$, set
\begin{align*}
\zeta^\bullet_{\widehat{\cS}}(\bk)&=\sum_{i=0}^r(-1)^{k_{i+1}+\cdots+k_r}\zeta^\bullet(k_1, \ldots, k_i)\\
&\quad\times\sum_{l_{i+1}, \ldots, l_r\geq0}\left[\prod_{j=i+1}^r\binom{k_j+l_j-1}{l_j}\right]\zeta^\bullet(k_r+l_r, \ldots, k_{i+1}+l_{i+1})t^{l_{i+1}+\cdots+l_r} \in \cZ\jump{t}.
\end{align*}
Here, $\cZ$ is the $\bQ$-subalgebra of $\bR$ generated by all MZVs and $\zeta^*(\bk) \in \cZ$ (resp.~$\zeta^{\sh}(\bk) \in \cZ$) is the harmonic (resp.~shuffle) regularized MZV.
See Subsection~\ref{subsec:alg_setup} for details.
It is known that $\zeta^*_{\widehat{\cS}}(\bk)-\zeta^{\sh}_{\widehat{\cS}}(\bk) \in (\zeta(2)\cZ)\jump{t}$ for any index $\bk$ (\cite[Proposition~3.2.4]{J2} and \cite[Proposition~2.1]{OSY}).
Thus, 
\begin{align*}
\zeta^{}_{\widehat{\cS}}(\bk)\coloneqq \zeta^\bullet_{\widehat{\cS}}(\bk) \bmod{\zeta(2)}
\end{align*}
is independent of the choice of the regularization $\bullet \in \{*, \sh\}$ and defines a well-defined element of $\overline{\cZ}\jump{t}\coloneqq (\cZ/\zeta(2)\cZ)\jump{t}$.
We call $\zeta^{}_{\widehat{\cS}}(\bk)$ the $\widehat{\cS}$-MZV.
We also define the \emph{$t$-adic symmetric multiple zeta-star value} ($\widehat{\cS}$-MZSV) $\zeta^\star_{\widehat{\cS}}(\bk)$ by
\[
\zeta^{\star}_{\widehat{\cS}}(k_1,\dots, k_r)=\sum_{\substack{\square \  \text{is either a comma `,'} \\ \text{or a plus `$+$'}}}\zeta^{}_{\widehat{\cS}}(k_1\square \cdots \square k_r).
\]
See \cite[Definition~1.1]{HMO} for another equivalent definition of the $\widehat{\cS}$-MZSV.
For a positive integer $n$, let $\pi_n\colon\overline{\cZ}\jump{t}\twoheadrightarrow\overline{\cZ}\jump{t}/(t^n)$ be the natural projection.
\begin{definition}
For an index $\bk=(k_1, \ldots, k_r)$, we define the \emph{$\cS_n$-multiple zeta(-star) value} ($\cS_n$-MZ(S)V) by 
\begin{align*}
\zeta^{}_{\cS_n}(\bk)\coloneqq \pi_n\bigl(\zeta^{}_{\widehat{\cS}}(\bk)\bigr), \quad \zeta^\star_{\cS_n}(\bk)\coloneqq \pi_n\bigl(\zeta^\star_{\widehat{\cS}}(\bk)\bigr)=\sum_{\substack{\square \  \text{is either a comma `,'} \\ \text{or a plus `$+$'}}}\zeta^{}_{\cS_n}(k_1\square \cdots \square k_r).
\end{align*}
\end{definition}
Note that $\zeta^{}_{\cS_1}(\bk)$ coincides with the usual symmetric multiple zeta value (SMZV) $\zeta^{}_{\cS}(\bk)$ defined by Kaneko and Zagier \cite{KZ}.

$\cA_n$-MZ(S)Vs and $\cS_n$-MZ(S)Vs are the main objects of this article and together they are called $\cF_n$-MZ(S)Vs; $\cF$ derives from the first letter of the word ``finite''.
Similar to the conjecture \cite[Conjecture~4.3]{OSY}, it is conjectured that $\cA_n$-MZVs and $\cS_n$-MZVs satisfy relations of the same form.
Hence, a relation among $\cA_n$-MZVs or $\cS_n$-MZVs is always described collectively as a relation of $\cF_n$-MZVs, at least conjecturally.
The purpose of this paper is to confirm that several evaluations of special values and several relations known only in $\cA_n$-MZVs or $\cS_n$-MZVs are uniformly valid in $\cF_n$-MZVs.
In some cases, we only deal with $n=1,2,3$.

The remainder of the paper is structured as follows.
In Section~\ref{sec:Prelim}, we prepare relevant tools including Zagier's formula for MZVs, the double shuffle relation for $\cF_n$-MZVs and the relation for $\cF_n$-MZVs derived from the antipode.
In Section~\ref{sec:sp_value}, we put forward some explicit evaluations of $\cF_n$-MZ(S)Vs.
In Section~\ref{sec:BB}, we prove the Bowman--Bradley type theorem for $\cF_2$-MZ(S)Vs.
In Section~\ref{sec:sum_formulas}, we prove sum formulas for $\cF_n$-MZ(S)Vs with respect to specific $n$.
Some complicated but elementary calculations for binomial coefficients (= the proof of Proposition~\ref{prop:true value of C}) are proved in the Appendix.
\section*{acknowledgement}
The authors would like to thank Dr.~Kenji Sakugawa for helpful discussion concerning Proposition~\ref{prop:Z_[A_n]} and Theorem~\ref{thm:dep-1_general}.
The authors also would like to thank Hanamichi Kawamura and the anonymous referee for valuable comments on the manuscript. 
\section{Preliminaries}\label{sec:Prelim}
In this section, we prepare tools which are used in the following sections.
\subsection{Algebraic setup}\label{subsec:alg_setup}
First we recall the notion of the harmonic algebra introduced in \cite{H1}.
Let $\frH^1\coloneqq\bQ+e_1\bQ\langle e_0,e_1\rangle \supset\frH^0\coloneqq\bQ+e_1\bQ\langle e_0,e_1\rangle e_0$, where $\bQ\langle e_0,e_1\rangle$ is a non-commutative polynomial algebra in two variables $e_0$ and $e_1$.
For a positive integer $k$, we set $e_k\coloneqq e_1e_0^{k-1}$.
We define the harmonic product $\ast$ on $\frH^1$ by $w\ast 1=1\ast w=w$, $e_{k_1}w_1\ast e_{k_2}w_2=e_{k_1}(w_1\ast e_{k_2}w_2)+e_{k_2}(e_{k_1}w_1\ast w_2)+e_{k_1+k_2}(w_1\ast w_2)$ ($w, w_1, w_2$ are words in $\frH^1$, $k_1,k_2\in\bZ_{>0}$) with $\bQ$-bilinearity.
We also define the shuffle product $\sh$ on $\bQ\langle e_0,e_1\rangle$ by $w\sh 1=1\sh w=w$, $u_1w_1\sh u_2w_2=u_1(w_1\sh u_2w_2)+u_2(u_1w_1\sh w_2)$ ($w, w_1, w_2$ are words in $\bQ\langle e_0,e_1\rangle$, $u_1,u_2\in\{e_0,e_1\}$) with $\bQ$-bilinearity.
Let $\bullet \in \{\ast,\sh\}$.
It is known that $\frH^1$ becomes a commutative $\bQ$-algebra with respect to the multiplication $\bullet$, which is denoted by $\frH^1_{\bullet}$.
The subspace $\frH^0$ of $\frH^1$ is closed under $\bullet$ and becomes a $\bQ$-subalgebra of $\frH^1_{\bullet}$, which is denoted by $\frH^0_{\bullet}$.
We define Muneta's shuffle product $\widetilde{\sh}$ on $\frH^1$ (\cite[\S3]{Mun}) by $w\widetilde{\sh} 1=1\widetilde{\sh} w=w$, $e_{k_1}w_1\widetilde{\sh} e_{k_2}w_2=e_{k_1}(w_1\widetilde{\sh} e_{k_2}w_2)+e_{k_2}(e_{k_1}w_1\widetilde{\sh} w_2)$ ($w, w_1, w_2$ are words in $\frH^1$, $k_1,k_2\in\bZ_{>0}$) with $\bQ$-bilinearity.

Next, we recall the harmonic (resp.~shuffle) regularized MZV introduced in \cite{IKZ}.
It is known that $\frH^1_{\bullet} \cong \frH^0_{\bullet}[e_1]$ as a $\bQ$-algebra (see \cite{H1} for $\bullet=*$ and \cite{Re} for $\bullet=\sh$).
Therefore, for $\bullet \in \{*,\sh\}$, any $a\in \frH^1_{\bullet}$ has a unique expression $a=\sum_{i=0}^{n} a_i \bullet e^{\bullet i}_1$, where $n\in \bZ_{\geq0}$, $a_i \in \frH^0_{\bullet}$ $(0 \leq i \leq n)$ and $e^{\bullet i}_1\coloneqq \overbrace{e_1 \bullet \cdots \bullet e_1}^{i}$.
By this expression, we define a $\bQ$-algebra homomorphism $\reg_{\bullet} \colon \frH^1_{\bullet} \cong \frH^0_{\bullet}[e_1] \rightarrow \frH^0_{\bullet}$ by $\reg_{\bullet}\left(\sum_{i=0}^n a_i \bullet e^{\bullet i}_1\right)\coloneqq a_0$.
We set $e_{\bk}\coloneqq e_{k_1}\cdots e_{k_r}$ for a non-empty index $\bk=(k_1,\dots, k_r)$ and $e_{\varnothing}\coloneqq 1$.
Then we define a $\bQ$-linear map $Z \colon \frH^0 \to \bR$ by $Z(e_{\bk})\coloneqq \zeta(\bk)$ for any admissible index $\bk$.
By using this terminology, we define the harmonic (resp.~ shuffle) regularized MZV $\zeta^{*}(\bk)$ (resp.~$\zeta^{\sh}(\bk)$) by $\zeta^{*}(\bk)=(Z\circ \reg_{*})(e_{\bk})$ (resp.~$\zeta^{\sh}(\bk) \coloneqq (Z \circ \reg_{\sh})(e_{\bk}))$ for any index $\bk$.

To calculate the shuffle regularized MZV, we use the following fact. 
\begin{lemma}[{Regularization formula, \cite[Proposition 8]{IKZ}}]\label{reg formula}
Let $w=w'e_0$ be an element of $\frH^0$ with $w'\in\frH^1$.
Then, for a non-negative integer $m$, we have
\[
\reg_{\sh}(we^m_1)=(-1)^m(w'\sh e^m_1)e_0.
\]
\end{lemma}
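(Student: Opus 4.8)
The plan is to induct on $m$, proving the formula for all $w=w'e_0\in\frH^0$ simultaneously (the induction is on the exponent $m$, but at the inductive step I will need the statement for a \emph{different} word, so the universal quantifier over $w$ is essential). The base case $m=0$ is immediate: since $w\in\frH^0$ is already fixed by the projection $\reg_\sh$, we get $\reg_\sh(w)=w=(w'\sh 1)e_0$, which is the asserted identity. Note also that $w=w'e_0\in\frH^0$ forces $w$ to be a nonempty word ending in $e_0$, hence starting with $e_1$, so $w'\in e_1\bQ\langle e_0,e_1\rangle$; I will use this to guarantee membership in $\frH^0$ below.

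The engine of the induction is a single structural fact: $\reg_\sh\colon\frH^1_\sh\to\frH^0_\sh$ is a $\bQ$-algebra homomorphism for $\sh$, and $\reg_\sh(e_1)=0$ (in $\frH^1_\sh\cong\frH^0_\sh[e_1]$ the letter $e_1$ is the polynomial variable, so its constant term vanishes). Consequently $\reg_\sh(u\sh e_1)=\reg_\sh(u)\sh\reg_\sh(e_1)=0$ for every $u\in\frH^1$. I will apply this to $u=we_1^{m-1}$.

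The computational heart is to expand $u\sh e_1$ using the combinatorial description of the shuffle of a word with a single letter: $u\sh e_1$ is the sum, over all gaps of the word $u=w'e_0e_1^{m-1}$, of the word obtained by inserting one extra $e_1$ into that gap. I would split the gaps into those lying after the $e_0$ (inside the trailing block $e_1^{m-1}$), of which there are exactly $m$ and each of which simply rebuilds $we_1^m$, and those lying inside $w'$ (before the $e_0$), whose contributions sum to $(w'\sh e_1)\,e_0\,e_1^{m-1}$. This yields the identity
\[
(we_1^{m-1})\sh e_1 = m\,we_1^m + \bigl((w'\sh e_1)e_0\bigr)e_1^{m-1}
\]
in $\frH^1$. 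Applying $\reg_\sh$ and using that the left-hand side has vanishing image gives $m\,\reg_\sh(we_1^m)=-\reg_\sh\!\bigl(\tilde w\,e_1^{m-1}\bigr)$, where $\tilde w\coloneqq(w'\sh e_1)e_0$ again lies in $\frH^0$ (its words start with $e_1$ and end with $e_0$).

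Finally I would feed $\tilde w=\tilde w'e_0$, with $\tilde w'=w'\sh e_1$, into the induction hypothesis at exponent $m-1$, obtaining $\reg_\sh(\tilde w\,e_1^{m-1})=(-1)^{m-1}\bigl((w'\sh e_1)\sh e_1^{m-1}\bigr)e_0$, and then simplify using associativity of $\sh$ together with the elementary computation $e_1\sh e_1^{m-1}=m\,e_1^m$ to reach $(-1)^{m-1}m\,(w'\sh e_1^m)e_0$. Dividing by $m$ (legitimate since we work over $\bQ$ and $m\geq 1$ in the inductive step) delivers the claim. The only genuine obstacle is the careful bookkeeping of the shuffle expansion, namely counting precisely the $m$ insertions that regenerate $we_1^m$ and verifying that the remaining insertions assemble exactly into $(w'\sh e_1)e_0e_1^{m-1}$ with $\tilde w\in\frH^0$; everything else is then forced by the homomorphism property and the induction hypothesis.
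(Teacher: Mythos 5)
Your proof is correct. Note that the paper itself gives no argument for this lemma: it is quoted verbatim from \cite[Proposition~8]{IKZ}, so there is no internal proof to compare against. Your induction on $m$ (with the statement quantified over all $w=w'e_0\in\frH^0$) is a clean, self-contained derivation: the base case is the identity of $\reg_{\sh}$ on $\frH^0$; the expansion $(we_1^{m-1})\sh e_1=m\,we_1^{m}+\bigl((w'\sh e_1)e_0\bigr)e_1^{m-1}$ is the correct single-letter shuffle decomposition (the $m$ gaps in the trailing block $e_1^{m-1}$, including the gaps adjacent to $e_0$ and at the end, each reproduce $we_1^m$, and the gaps inside $w'$ assemble into $(w'\sh e_1)e_0e_1^{m-1}$); the vanishing of $\reg_{\sh}$ on shuffle multiples of $e_1$ follows from the homomorphism property and $\reg_{\sh}(e_1)=0$; and the simplification $(w'\sh e_1)\sh e_1^{m-1}=w'\sh(e_1\sh e_1^{m-1})=m\,(w'\sh e_1^{m})$ together with division by $m$ over $\bQ$ closes the induction. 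Your check that $\tilde w=(w'\sh e_1)e_0$ again lies in $\frH^0$ (using that $w'$ has no constant term, which is forced by $w'e_0\in\frH^0$) is exactly the point that makes the induction hypothesis applicable. This is essentially the standard argument; the original source obtains the same statement via the explicit description of the isomorphism $\frH^1_{\sh}\cong\frH^0_{\sh}[e_1]$ (equivalently, a generating-series identity expressing $we_1^m$ in terms of shuffle powers of $e_1$ with $\frH^0$-coefficients), of which your computation extracts precisely the constant term.
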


\subsection{Zagier's formulas for MZVs}
We quote some results on MZVs.
We use these results to evaluate some $\cS_1$-MZ(S)Vs and $\cS_2$-MZ(S)Vs. 
\begin{theorem}[{\cite[Theorem 1]{Z}}]\label{Zagier 1}
For non-negative integers $a$ and $b$, we have
\[
\zeta(\{2\}^a, 3, \{2\}^b)=2\sum_{r=1}^{a+b+1}(-1)^r\left\{\binom{2r}{2a+2}-\left(1-\frac{1}{2^{2r}}\right)\binom{2r}{2b+1}\right\}\zeta(\{2\}^{a+b-r+1})\zeta(2r+1),
\]
where $\{2\}^a$ denotes $a$ repetitions $\underbrace{2,\dots,2}_a$.
In particular, we have
\begin{multline}\label{Zagier 1 mod zeta(2)}
\zeta(\{2\}^a, 3, \{2\}^b)\\
\equiv2(-1)^{a+b+1}\left\{\binom{2a+2b+2}{2a+2}-\left(1-\frac{1}{4^{a+b+1}}\right)\binom{2a+2b+2}{2b+1}\right\}\zeta(2a+2b+3) \bmod{\zeta(2)}.
\end{multline}
\end{theorem}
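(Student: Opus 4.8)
The statement consists of two parts, the exact evaluation and the congruence \eqref{Zagier 1 mod zeta(2)} modulo $\zeta(2)$, and my plan is to deduce the congruence as an immediate corollary of the exact formula, so that the real work lies in the first identity (which is Zagier's theorem). Writing $H(a,b)\coloneqq\zeta(\{2\}^a,3,\{2\}^b)$, I would attack the exact formula by the generating-function method. Represent each value as a Chen iterated integral over the simplex $0<t_1<\cdots<t_w<1$: with $\omega_0=dt/t$ and $\omega_1=dt/(1-t)$, the index $(\{2\}^a,3,\{2\}^b)$ corresponds to the word $(\omega_1\omega_0)^a\,\omega_1\omega_0^2\,(\omega_1\omega_0)^b$. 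I would then form the two-variable generating function $\sum_{a,b\geq0}H(a,b)\,x^{2a}y^{2b}$ and sum the repeated blocks $\omega_1\omega_0$ on the left and on the right as geometric series; this collapses the infinite family of iterated integrals into a single integral whose integrand is an elementary function of the integration variables depending analytically on $x$ and $y$.

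The crux is to evaluate this generating integral in closed form. After the geometric summation it becomes a beta-type integral, and its value can be packaged in terms of $\Gamma(1\pm x)$, $\Gamma(1\pm y)$ and $\sin\pi x$, equivalently via the series $\sum_{m\geq0}(-1)^m\zeta(\{2\}^m)x^{2m}=\frac{\sin\pi x}{\pi x}$ (using $\zeta(\{2\}^m)=\pi^{2m}/(2m+1)!$) together with the odd zeta values $\zeta(2r+1)$, which arise as Taylor coefficients of a digamma / logarithmic-derivative factor. Extracting the coefficient of $x^{2a}y^{2b}$ then reproduces the stated expression: the binomials $\binom{2r}{2a+2}$ and $\binom{2r}{2b+1}$ come from a binomial expansion of a power $(\,\cdot\,)^{2r}$, and the factor $1-2^{-2r}$ reflects that the relevant factor contributes both $\zeta(2r+1)$ and its alternating companion $\sum_{n\geq1}(-1)^{n-1}n^{-(2r+1)}=(1-2^{-2r})\zeta(2r+1)$. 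A cleaner modern alternative that I would also consider is the connected-sum (connector) method, in which one introduces a beta-type connecting factor and a transport relation that shifts the index from one side to the other, yielding the same evaluation inductively.

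The main obstacle is exactly this closed-form evaluation. On the generating-function side it rests on a nontrivial hypergeometric identity (a ${}_3F_2$ evaluation at $1$, or an equivalent contiguous relation) and on careful regularization at the endpoints, where the individual blocks are only conditionally convergent; on the connected-sum side it amounts to finding the correct connector and verifying its transport relation. Either way this is where all the content sits, and the bookkeeping that turns the closed form into the displayed double sum is routine by comparison.

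Granting the exact formula, the congruence \eqref{Zagier 1 mod zeta(2)} is immediate. Since $\zeta(\{2\}^m)=\pi^{2m}/(2m+1)!$ is a rational multiple of $\zeta(2)^m$, we have $\zeta(\{2\}^m)\in\zeta(2)\cZ$ for every $m\geq1$; hence every term of the sum over $r$ with $a+b-r+1\geq1$, i.e.\ $r\leq a+b$, vanishes modulo $\zeta(2)$. Only the top term $r=a+b+1$ survives, where $\zeta(\{2\}^0)=1$, and substituting this value (so that $2r=2a+2b+2$, $(-1)^r=(-1)^{a+b+1}$, $1-2^{-2r}=1-4^{-(a+b+1)}$ and $\zeta(2r+1)=\zeta(2a+2b+3)$) yields precisely the stated congruence.
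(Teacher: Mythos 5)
Your deduction of the congruence \eqref{Zagier 1 mod zeta(2)} is exactly the paper's (implicit) argument: since $\zeta(\{2\}^m)=\pi^{2m}/(2m+1)!$ lies in $\zeta(2)\cZ$ for $m\geq 1$, only the $r=a+b+1$ term of Zagier's exact formula survives modulo $\zeta(2)$, and the paper, like you, takes that exact formula as a black box from \cite[Theorem~1]{Z} rather than proving it. Your sketch of a generating-function/iterated-integral proof of the exact identity is a reasonable outline of Zagier's actual method, but as you acknowledge it leaves the key closed-form (hypergeometric) evaluation unverified; this is extra material the paper does not attempt, so for the part the paper actually establishes your argument is correct and essentially identical.
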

\begin{theorem}[{\cite[Proposition 7]{Z}}]\label{Zagier 2}
Let $m$ and $n$ be positive integers with $n \geq2$ and $k\coloneqq m+n$ being odd. Define a positive integer $K$ as $k=2K+1$. Then we have
\begin{align*}
\zeta(m,n)=(-1)^m\sum_{s=0}^{K-1}\left\{\binom{k-2s-1}{m-1}+\binom{k-2s-1}{n-1}-\delta_{n, 2s}+(-1)^m\delta_{s, 0}\right\}\zeta(2s)\zeta(k-2s).
\end{align*}
Here $\delta_{x,y}$ is Kronecker's delta, and we understand $\zeta(0)=-\frac{1}{2}$. In particular, we have
\begin{align}\label{Zagier 2 mod zeta(2)}
\zeta(m, n)\equiv (-1)^{m+1}\frac{1}{2}\left\{\binom{k}{m}+(-1)^m\right\}\zeta(k) \bmod{\zeta(2)}.
\end{align}
\end{theorem}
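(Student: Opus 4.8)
Since this is Euler's classical reflection formula for double zeta values of odd weight, the plan is to derive it from the regularized double shuffle relations, for which the harmonic and shuffle products $\ast,\sh$ together with the regularization maps $\reg_{\ast},\reg_{\sh}$ of Subsection~\ref{subsec:alg_setup} are exactly the required tools. The two starting identities are the harmonic product relation
\[
\zeta^{\ast}(m)\zeta^{\ast}(n)=\zeta(m,n)+\zeta(n,m)+\zeta(k),
\]
coming from $e_m\ast e_n=e_me_n+e_ne_m+e_{m+n}$, and the shuffle product relation
\[
\zeta^{\sh}(m)\zeta^{\sh}(n)=\sum_{\substack{i+j=k\\ i,j\geq1}}\left[\binom{j-1}{m-1}+\binom{j-1}{n-1}\right]\zeta^{\sh}(i,j),
\]
obtained by expanding $e_m\sh e_n=e_1e_0^{m-1}\sh e_1e_0^{n-1}$ and reading off the coefficient of each word $e_ie_j$. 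Since the hypothesis permits $m=1$, I would work throughout with the regularized values: for $m,n\geq2$ the non-admissible boundary terms $j=1$ carry coefficient $0$, while the remaining cases are controlled by the regularization formula (Lemma~\ref{reg formula}).

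The harmonic relation is symmetric in $m$ and $n$, so by itself it only produces the combination $\zeta(m,n)+\zeta(n,m)$; the asymmetry needed to isolate $\zeta(m,n)$ comes from the shuffle relation. I would therefore run both relations over all decompositions $a+b=k$ at once and solve the resulting linear system for the individual double zeta values. The structural point is that this system is non-degenerate precisely when $k$ is odd: then every decomposition $k=a+b$ pairs an even argument with an odd one, so the products appearing on the right are exactly $\zeta(2s)\zeta(k-2s)$, and the double shuffle matrix becomes invertible, whereas for even weight it degenerates (reflecting genuinely new periods such as $\zeta(2,6)$). Organizing the solution through the generating series $\sum_{i,j}\zeta(i,j)X^{i-1}Y^{j-1}$ converts the stuffle and shuffle relations into a single functional equation whose odd part can be read off in closed form, and this is the cleanest route to the stated binomial expression.

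The ``in particular'' statement is then immediate. Reducing modulo $\zeta(2)$ annihilates every $\zeta(2s)$ with $s\geq1$, since $\zeta(2s)\in\zeta(2)\cZ$, so only the $s=0$ term of the main formula survives. Substituting $\zeta(0)=-\tfrac12$ and $\delta_{n,0}=0$ (as $n\geq2$), that term equals $(-1)^m\left[\binom{k-1}{m-1}+\binom{k-1}{n-1}+(-1)^m\right]\cdot\left(-\tfrac12\right)\zeta(k)$, and Pascal's identity combined with $\binom{k-1}{m}=\binom{k-1}{n-1}$ yields $\binom{k-1}{m-1}+\binom{k-1}{n-1}=\binom{k}{m}$, which is precisely \eqref{Zagier 2 mod zeta(2)}.

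The hard part is purely combinatorial: extracting from the solved system the exact coefficient $\binom{k-2s-1}{m-1}+\binom{k-2s-1}{n-1}$ and, above all, pinning down the two correction terms $-\delta_{n,2s}$ and $(-1)^m\delta_{s,0}$. These come from boundary and diagonal effects — the first from a collision in the binomial bookkeeping when the outer exponent meets an even index $2s$, the second from the separate treatment of the $\zeta(k)$ produced by the harmonic product and the convention $\zeta(0)=-\tfrac12$ at $s=0$. Tracking these edge contributions accurately, rather than any conceptual difficulty, is where the care lies.
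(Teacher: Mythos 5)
The paper does not prove the main identity at all: Theorem~\ref{Zagier 2} is quoted verbatim from \cite[Proposition~7]{Z}, and the only thing the paper itself extracts is the congruence \eqref{Zagier 2 mod zeta(2)}, obtained by observing that $\zeta(2s)\in\zeta(2)\cZ$ for $s\geq1$ so that only the $s=0$ term survives. Your treatment of that step is correct and identical to the paper's: the $s=0$ term is $(-1)^m\bigl\{\binom{k-1}{m-1}+\binom{k-1}{n-1}+(-1)^m\bigr\}\cdot(-\tfrac12)\zeta(k)$, and since $\binom{k-1}{n-1}=\binom{k-1}{m}$, Pascal's rule gives $\binom{k-1}{m-1}+\binom{k-1}{n-1}=\binom{k}{m}$, which is \eqref{Zagier 2 mod zeta(2)}.

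For the main formula, however, your argument has a genuine gap. You correctly set up the two input relations (the harmonic relation $\zeta(m)\zeta(n)=\zeta(m,n)+\zeta(n,m)+\zeta(k)$ and the shuffle relation with coefficients $\binom{j-1}{m-1}+\binom{j-1}{n-1}$, with the regularization Lemma~\ref{reg formula} handling $m=1$), and the claim that the resulting linear system determines all weight-$k$ double zetas in odd weight is true. But you never solve the system: the invertibility of the ``double shuffle matrix'' in odd weight is asserted rather than proved, the generating-series functional equation is invoked but not written down or analyzed, and the precise coefficients of the answer --- in particular the two correction terms $-\delta_{n,2s}$ and $(-1)^m\delta_{s,0}$, which you acknowledge are ``where the care lies'' --- are attributed to unspecified ``boundary and diagonal effects'' instead of being derived. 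As it stands the proposal is a plausible plan for reproving Euler's decomposition, not a proof of it; to close the gap you would either have to carry out the generating-function computation explicitly (essentially reproducing Zagier's or Euler's partial-fraction argument) or, more in the spirit of this paper, simply cite \cite[Proposition~7]{Z} as the authors do and reserve your own work for the reduction modulo $\zeta(2)$.
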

\subsection{Double shuffle relation for $\cF_n$-MZVs}
The double shuffle relation (DSR) for $\cF_n$-MZVs with $\cF \in \{\cA, \cS\}$ established by Jarossay is a key tool in this paper.
We define $\bQ$-linear maps $Z_{\cA_n} \colon \frH^1 \to \cA_n$ and $Z_{\cS_n} \colon \frH^1 \to \overline{\cZ}\jump{t}/(t^n)$ by
\[
Z_{\cA_n}(e_{\bk})
\coloneqq \zeta^{}_{\cA_n}(\bk), \qquad Z_{\cS_n}(e_{\bk})
\coloneqq \zeta^{}_{\cS_n}(\bk)
\]
for any index $\bk$.
\begin{theorem}[{DSR for $\cF_n$-MZVs, \cite{J2}.~cf.~\cite[Theorems~1.3 and 1.9]{OSY}}]\label{DSRn}
For indices $\bk$ and $\bl=(l_1, \ldots, l_s)$ and a positive integer $n$, we have the harmonic relation for $\cF_n$-MZVs
\begin{equation}\label{eq:harmonic_relation}
Z_{\cF_n}(e_{\bk} *e_{\bl})=Z_{\cF_n}(e_{\bk})Z_{\cF_n}(e_{\bl})
\end{equation}
and the shuffle relation for $\cF_n$-MZVs
\begin{equation}\label{eq:shuffle_relation}
Z_{\cF_n}(e_{\bk} \sh e_{\bl})=(-1)^{\wt(\bl)}\sum_{\substack{\bl'=(l'_1, \ldots, l'_s) \in \bZ^s_{\geq0} \\ \wt(\bl') \leq n-1}}\left[\prod_{j=1}^s\binom{l_j+l'_j-1}{l'_j}\right]Z_{\cF_n}(e_{\bk}e_{\overline{\bl+\bl'}})x^{\wt(\bl')}_{\cF_n}.
\end{equation}
Here, we set $\wt(\bl')\coloneqq l'_1+\cdots+l'_s$,  $\overline{\bl+\bl'}\coloneqq (l_s+l_s', \dots, l_1+l_1')$ and
\[
x_{\cF_n}\coloneqq 
\begin{cases}
\pp_n\coloneqq (p \bmod{p^n})_p & \text{if $\cF=\cA$}, \\
t\bmod{t^n} & \text{if $\cF=\cS$}.
\end{cases}
\]
\end{theorem}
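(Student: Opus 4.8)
The plan is to prove the four assertions---the harmonic relation \eqref{eq:harmonic_relation} and the shuffle relation \eqref{eq:shuffle_relation}, each in the case $\cF=\cA$ and in the case $\cF=\cS$---separately, since the arithmetic model $\cA_n$ and the $t$-adic model $\cS_n$ are built by genuinely different mechanisms and only the \emph{shapes} of the resulting identities agree. Throughout I would exploit that $Z_{\cA_n}$ and $Z_{\cS_n}$ are defined on all of $\frH^1$ (no admissibility is needed, as the relevant sums are finite for $\cA$ and regularized for $\cS$), so that both sides of each relation make sense verbatim.

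For the harmonic relation in the case $\cF=\cA$, I would argue directly. For each prime $p$ the truncated multiple harmonic sums satisfy the classical quasi-shuffle identity as an equality of rational numbers, obtained by splitting a product of two nested sums according to whether summation variables coincide or are strictly ordered; this is exactly the recursion defining $\ast$. Reducing mod $p^n$ and assembling over $p$ shows that $Z_{\cA_n}$ is a $\bQ$-algebra homomorphism $(\frH^1,\ast)\to\cA_n$, which is \eqref{eq:harmonic_relation}. For $\cF=\cS$ the relation is less transparent, because $\zeta^{}_{\widehat{\cS}}$ is given by Jarossay's explicit coproduct-type formula rather than by a sum. Here I would use that the harmonic regularized value $\zeta^{\ast}$ is multiplicative for $\ast$ (as $\reg_{\ast}$ is an algebra map and $Z$ is a $\ast$-homomorphism on $\frH^0$) and then check that the deconcatenation-with-reversal appearing in the definition of $\zeta^{\ast}_{\widehat{\cS}}$ is compatible with $\ast$, so that $\bk\mapsto\zeta^{\ast}_{\widehat{\cS}}(\bk)$ is itself a $\ast$-homomorphism; passing mod $\zeta(2)$ and mod $(t^n)$ then yields \eqref{eq:harmonic_relation} for $Z_{\cS_n}$.

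Both cases of the shuffle relation rest on the single binomial expansion $(1-x)^{-l}=\sum_{l'\ge 0}\binom{l+l'-1}{l'}x^{l'}$. For $\cF=\cA$ the essential input is the reversal substitution $m_j\mapsto p-m_{s+1-j}$ applied to the $\bl$-block of the truncated sum, together with $(p-m)^{-l}=(-1)^l m^{-l}(1-p/m)^{-l}$, i.e.\ the expansion with $x=p/m$; discarding all terms of $p$-order $\ge n$ leaves precisely the sum over $\wt(\bl')\le n-1$ with weights $\prod_j\binom{l_j+l'_j-1}{l'_j}$, sign $(-1)^{\wt(\bl)}$, reversed-and-shifted index $\overline{\bl+\bl'}$, and factor $\pp_n^{\wt(\bl')}$. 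For $\cF=\cS$ the very same expansion is built into Jarossay's definition with $x=t$ (this is the suffix sum $\sum_{l_j\ge 0}\binom{k_j+l_j-1}{l_j}\cdots t^{\cdots}$), which is exactly why $x_{\cF_n}$ interpolates $\pp_n$ and $t\bmod t^n$. Here I would work with the shuffle regularized representative $\zeta^{\sh}_{\widehat{\cS}}$---legitimate since $\zeta^{\ast}_{\widehat{\cS}}-\zeta^{\sh}_{\widehat{\cS}}\in(\zeta(2)\cZ)\jump{t}$---and combine the $\sh$-multiplicativity of $\zeta^{\sh}$ with the reversal induced by the path change $t\mapsto 1-t$ on iterated integrals.

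I expect the shuffle relation to be the main obstacle. First, the spectator index $\bk$ is \emph{concatenated} rather than shuffled on the right-hand side, so one must show that the interleavings produced by $e_\bk\sh e_\bl$ reorganize, after the reflection, into the single family $e_\bk e_{\overline{\bl+\bl'}}$ with the stated weights; a direct check already at $n=1$ shows this forces auxiliary reversal identities for the shorter indices that appear, so the clean route is to package all values $Z_{\cF_n}(e_\bk\,\cdot\,)$ into one non-commutative generating series following Jarossay and to read off \eqref{eq:shuffle_relation} as the coefficientwise form of the functional equation that this series satisfies under the reflection, the reflection touching only the $\bl$-variables. The delicate bookkeeping of binomial coefficients here is the combinatorial content later isolated in the Appendix (cf.\ Proposition~\ref{prop:true value of C}). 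Second, and conceptually, one must verify that the arithmetic reflection $n\mapsto p-n$ and the analytic reflection $t\mapsto 1-t$ truncate to literally the same formula, which is what makes the uniform statement over $\cF\in\{\cA,\cS\}$ possible; by comparison the harmonic relation is routine, apart from the non-obvious compatibility of Jarossay's coproduct formula with $\ast$ flagged above.
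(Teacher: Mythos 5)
First, a point of comparison: the paper does not prove Theorem~\ref{DSRn} at all --- it is imported verbatim from Jarossay \cite{J2} and \cite{OSY}, so there is no in-paper argument to measure your attempt against. Judged on its own, your sketch correctly identifies the mechanisms that the cited proofs actually use: the stuffle decomposition of products of truncated harmonic sums for \eqref{eq:harmonic_relation} with $\cF=\cA$; the reflection $m\mapsto p-m$ together with the expansion $(p-m)^{-l}=(-1)^l\sum_{l'\ge0}\binom{l+l'-1}{l'}p^{l'}m^{-l-l'}$ truncated at $p$-order $n$ for \eqref{eq:shuffle_relation} with $\cF=\cA$; and, for $\cF=\cS$, the fact that $\bk\mapsto\zeta^{\ast}_{\widehat{\cS}}(\bk)$ (resp.\ $\zeta^{\sh}_{\widehat{\cS}}$) should be a $\ast$- (resp.\ $\sh$-) homomorphism up to the stated twist. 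The shape of the right-hand side of \eqref{eq:shuffle_relation}, including the sign $(-1)^{\wt(\bl)}$, the reversed shifted index $\overline{\bl+\bl'}$ and the factor $x_{\cF_n}^{\wt(\bl')}$, is accounted for correctly.

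However, as a proof the proposal has two genuine gaps, both of which you flag but neither of which you close. (1) For the shuffle relation, the left-hand side $Z_{\cF_n}(e_{\bk}\sh e_{\bl})$ is a sum of single nested sums indexed by shuffle words, in which the $\bk$- and $\bl$-variables are interleaved; there is no ``$\bl$-block'' to reflect. One must first introduce an intermediate two-chain object (a bi-indexed truncated sum with a coupling condition on the top variables, or equivalently Jarossay's generating series) and prove the combinatorial lemma that its interleaving expansion equals the shuffle-product side while its reflection equals the concatenation side. You name this as ``the main obstacle'' and propose the generating-series packaging, but the lemma itself --- which is the entire content of the theorem in the $\cA$ case and of \cite[Theorem~1.9]{OSY} in the $\cS$ case --- is not supplied. (2) For \eqref{eq:harmonic_relation} with $\cF=\cS$, the statement that the deconcatenation-with-reversal in the definition of $\zeta^{\ast}_{\widehat{\cS}}$ ``is compatible with $\ast$'' is not a step to be checked in passing; it is an antipode/convolution identity in the quasi-shuffle Hopf algebra and is precisely \cite[Theorem~1.3]{OSY}. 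Finally, a misattribution: the binomial bookkeeping in the Appendix (Proposition~\ref{prop:true value of C}) belongs to the evaluation of $\zeta^{}_{\cF_2}(\{1\}^a,2,\{1\}^b)$ in Theorem~\ref{S2 1 2 1}, not to the proof of the double shuffle relation.
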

We refer to the case $\bk=\varnothing$ of the shuffle relation as the \emph{reversal formula}.

We also use the following relation for $\cF_n$-MZVs.
\begin{proposition}\label{antipode}
For an index $\bk=(k_1, \ldots, k_r)$, $n \in \bZ_{\ge1}$ and $\cF \in \{\cA, \cS\}$, we have
\begin{align*}
\sum_{i=0}^r(-1)^i\zeta^{}_{\cF_n}(k_1, \ldots, k_i)\zeta^{\star}_{\cF_n}(k_r, \ldots, k_{i+1})=0.
\end{align*}
\end{proposition}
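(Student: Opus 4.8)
The plan is to deduce this identity from the Hopf-algebra structure underlying the harmonic product, combined with the harmonic relation for $\cF_n$-MZVs recorded in Theorem~\ref{DSRn}. Recall that $\frH^1_{\ast}$ becomes a connected graded commutative Hopf algebra once equipped with the deconcatenation coproduct $\Delta(e_{k_1}\cdots e_{k_r})=\sum_{i=0}^r e_{k_1}\cdots e_{k_i}\otimes e_{k_{i+1}}\cdots e_{k_r}$ and the counit $\varepsilon$ (Hoffman, \cite{H1}). Its antipode $S$ has the explicit description
\[
S(e_{k_1}\cdots e_{k_m})=(-1)^m\sum_{\substack{\square \ \text{is either a comma `,'}\\ \text{or a plus `$+$'}}} e_{(k_m\square\cdots\square k_1)},
\]
which is exactly the combinatorics that produces an MZSV from MZVs. (One checks this by the antipode recursion; for instance $S(e_{k_1}e_{k_2})=e_{k_2}e_{k_1}+e_{k_1+k_2}$.)

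First I would use this antipode formula to rewrite the star-values in the statement. Applying the homomorphism $Z_{\cF_n}$ and comparing with the definition $\zeta^{\star}_{\cF_n}(\bk)=\sum_{\square}\zeta^{}_{\cF_n}(k_1\square\cdots\square k_r)$, one obtains, for each $0\le i\le r$,
\[
\zeta^{\star}_{\cF_n}(k_r,\ldots,k_{i+1})=(-1)^{r-i}\,Z_{\cF_n}\bigl(S(e_{k_{i+1}}\cdots e_{k_r})\bigr),
\]
because reversing the index $(k_{i+1},\ldots,k_r)$ of the length-$(r-i)$ word $e_{k_{i+1}}\cdots e_{k_r}$ and then inserting the separators $\square$ reproduces precisely the defining sum of the MZSV. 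Substituting this into the left-hand side of the proposition and extracting the sign $(-1)^{r-i}$ collapses the two signs into a global factor $(-1)^r$.

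Next I would invoke the harmonic relation \eqref{eq:harmonic_relation}, which asserts that $Z_{\cF_n}$ is an algebra homomorphism from $\frH^1_{\ast}$ to the target ring. This allows me to write the resulting sum of products as a single application of $Z_{\cF_n}$ to
\[
\sum_{i=0}^r e_{k_1}\cdots e_{k_i}\ast S(e_{k_{i+1}}\cdots e_{k_r})=\bigl(m_{\ast}\circ(\mathrm{id}\otimes S)\circ\Delta\bigr)(e_{k_1}\cdots e_{k_r}),
\]
where $m_{\ast}$ denotes multiplication by the harmonic product $\ast$. By the defining property of the antipode in the Hopf algebra $(\frH^1_{\ast},\Delta,S)$, the right-hand side equals $(u\circ\varepsilon)(e_{k_1}\cdots e_{k_r})$, where $u$ is the unit; this vanishes because $\bk$ is a non-empty index. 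Hence the whole expression equals $(-1)^r Z_{\cF_n}(0)=0$, as claimed.

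The main obstacle will be pinning down the explicit antipode formula together with its sign and reversal bookkeeping, so that it matches the MZSV combinatorics exactly; everything afterwards is formal once \eqref{eq:harmonic_relation} is available, since that relation is exactly the algebra-homomorphism property of $Z_{\cF_n}$ needed to pull the sum inside. Note that no feature distinguishing $\cA_n$ from $\cS_n$ enters beyond \eqref{eq:harmonic_relation}, so the two cases $\cF\in\{\cA,\cS\}$ are treated simultaneously.
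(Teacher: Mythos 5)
Your proposal is correct and is essentially the paper's own argument: the paper also deduces the identity from the harmonic relation \eqref{eq:harmonic_relation} together with the antipode identity in the harmonic algebra, which it cites as \cite[Proposition~6]{IKOO} rather than deriving from the Hopf-algebra axioms as you do. Your derivation of that identity via $m_{\ast}\circ(\mathrm{id}\otimes S)\circ\Delta=u\circ\varepsilon$ and the explicit antipode of the quasi-shuffle Hopf algebra is a valid substitute for the citation (and even gets the sign right, which the paper notes is mistaken in the cited source).
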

\begin{proof}
This follows from the harmonic relation and \cite[Proposition 6]{IKOO} (Note that the sign of \cite[Proposition 6]{IKOO} is mistaken).
The case $\cF=\cA$ was first mentioned in \cite[Corollary~3.16 (42)]{SS}.
\end{proof}
\section{Special values}\label{sec:sp_value}
In this section, we explicitly evaluate some $\cF_n$-MZ(S)Vs.
For positive integers $n$ and $k$, set 
\begin{align*}
\frZ_{\cF_n}(k)\coloneqq 
\begin{cases}
\displaystyle\left(\frac{B_{p^{n-1}(p-1)-k+1}}{k-1+p^{n-1}}\bmod{p^n} \right)_p \in \cA_n & \text{if $\cF=\cA$}, \\
\zeta(k) \bmod{\zeta(2)} \in\overline{\cZ}\jump{t}/(t^n) & \text{if $\cF=\cS$}.
\end{cases}
\end{align*}
Here, $B_j$ is the $j$-th Seki--Bernoulli number and $\widehat{B}_j$ denotes $\frac{B_j}{j}$.
\begin{proposition}\label{prop:Z_[A_n]}
Let $n$ and $k$ be positive integers.
For $1\leq l\leq n-1$,
\[
\frZ_{\cA_n}(k+l)\pp_n^l=\sum_{j=1}^{n-l}(-1)^j\binom{n-l}{j}\left(\widehat{B}_{j(p-1)-k-l+1}\cdot p^l\bmod{p^n}\right)_p \in \cA_n
\]
holds.
In particular,
\[
\frZ_{\cA_2}(k+1)\pp_2=\left(\frac{B_{p-k-1}}{k+1}\cdot p\bmod{p^2}\right)_p\in \cA_2.
\]
\end{proposition}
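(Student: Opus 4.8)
The plan is to reduce the statement, for each (large) prime $p$, to a congruence in $\bZ_p$ between Bernoulli numbers and then to recognize it as a finite-difference incarnation of Kummer's congruences. Write $K\coloneqq k+l$, $M\coloneqq p^{n-1}(p-1)-K+1$ and $N\coloneqq n-l$, so that the $p$-component of $\frZ_{\cA_n}(k+l)\pp_n^l$ is $\frac{B_M}{K-1+p^{n-1}}\,p^l$. First I would exploit the identity $M=p^n-(K-1+p^{n-1})$, which gives $M\equiv-(K-1+p^{n-1})\pmod{p^n}$; since $M\equiv 1-K\not\equiv0\pmod{p-1}$ for $p$ large, $\widehat{B}_M=B_M/M$ is $p$-integral, and therefore
\[
\frac{B_M}{K-1+p^{n-1}}=\widehat{B}_M\cdot\frac{M}{K-1+p^{n-1}}\equiv-\widehat{B}_M\pmod{p^n}.
\]
As both sides of the asserted identity carry the factor $p^l$, it then suffices to prove the congruence of cofactors modulo $p^{N}$, namely $-\widehat{B}_M\equiv\sum_{j=1}^{N}(-1)^j\binom{N}{j}\widehat{B}_{j(p-1)-K+1}\pmod{p^N}$.

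The key step is to read this off from a finite difference. Set $G(x)\coloneqq\widehat{B}_{x(p-1)-K+1}$; then $\widehat{B}_M=G(p^{n-1})$ and the right-hand sum is $\sum_{j=1}^{N}(-1)^j\binom{N}{j}G(j)$. Because $-(K-1)\not\equiv0\pmod{p-1}$, the values $G(x)$ for $x\in\bZ_{\ge1}$ are, up to the Euler factor $1-p^{\,x(p-1)-K}$ (which is $\equiv1\pmod{p^N}$ here, as all exponents exceed $N$ for large $p$), interpolated by the Kubota--Leopoldt $p$-adic $L$-function along the residue class $1-K\bmod(p-1)$; I continue to write $G$ for the resulting $p$-adically analytic function $\bZ_p\to\bZ_p$. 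The basic Kummer congruence then gives $G(p^{n-1})\equiv G(0)\pmod{p^n}$ (since $p^{n-1}\equiv0\pmod{p^{n-1}}$ and $n-1\ge N$), so modulo $p^N$ the desired congruence becomes
\[
\sum_{j=0}^{N}(-1)^j\binom{N}{j}G(j)=(-1)^N(\Delta^N G)(0)\equiv0\pmod{p^N},
\]
where $\Delta$ is the forward difference operator.

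The main obstacle is precisely this last divisibility $v_p\bigl((\Delta^N G)(0)\bigr)\ge N$: the pairwise Kummer congruence only shows that $G$ is Lipschitz with constant $p^{-1}$, and that alone does \emph{not} force the $N$-th difference to be divisible by $p^N$ (for instance $p\binom{x}{2}$ is such a Lipschitz function with second difference exactly $p$). What saves the day is that $G$ is not merely Lipschitz but Iwasawa-analytic: writing $G(x)=f(u^x-1)$ with $f\in\bZ_p[[T]]$ and $u=1+p$, one checks that each application of $\Delta$ replaces $f$ by a power series in $p\,\bZ_p[[T]]$, so that $(\Delta^N G)(0)\in p^N\bZ_p$. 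This is the strong (general) Kummer congruence, and it is the substantive input I would cite. Finally I would dispose of two side issues: when $K$ is even every Bernoulli number occurring has odd index $\ge3$, so both sides vanish and the statement is trivial; and for the displayed special case $n=2$ one has $N=1$, so only the basic congruence $G(p)\equiv G(1)\pmod p$ is needed, after which $-\widehat{B}_{p-k-1}=-\frac{B_{p-k-1}}{p-k-1}\equiv\frac{B_{p-k-1}}{k+1}\pmod p$ (because $\frac{-1}{p-k-1}\equiv\frac{1}{k+1}\pmod p$) yields the stated closed form.
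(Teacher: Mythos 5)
Your proof is correct, but it reaches the result by a genuinely different route from the paper. The paper simply quotes an explicit Kummer-type congruence of Z.-H.~Sun \cite[Corollary~4.1]{Su}, which already expresses $\frac{B_{p^{n-1}(p-1)-k-l+1}}{k+l-1+p^{n-1}}$ as a linear combination of the $\widehat{B}_{j(p-1)-k-l+1}$ with coefficients $(-1)^{n+l}(-1)^{j-1}\binom{p^{n-1}-1-j}{n-l-j}\binom{p^{n-1}-1}{j-1}$, and then finishes by the elementary reduction of these binomial coefficients to $\binom{n-l}{j}$ modulo $p^{n-1}$ (which suffices after multiplication by $p^{l}\widehat{B}$). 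You instead re-derive the required congruence from the structure of the Kubota--Leopoldt $p$-adic $L$-function: after the (correct) normalizations $\frac{B_M}{K-1+p^{n-1}}\equiv-\widehat{B}_M$ and the removal of the Euler factors, the statement becomes the vanishing modulo $p^{N}$ of the $N$-th finite difference of $x\mapsto L_p(K-x(p-1),\omega^{1-K})$, which you obtain from the Iwasawa power-series representation. Your observation that the naive Lipschitz (pairwise Kummer) bound is insufficient and that it is the analytic structure which forces $(\Delta^N G)(0)\in p^N\bZ_p$ is exactly the right point, and your treatment of the parity degeneration ($K$ even) and of the $n=2$ special case is correct. The trade-off: the paper's argument is shorter and stays within elementary congruences once Sun's result is granted, whereas yours replaces that citation by standard $p$-adic $L$-function machinery and makes it transparent why a congruence of this shape must hold; note that your argument genuinely needs the $L$-function formulation, since the $j=0$ term of the difference corresponds to a non-positive index where no Bernoulli number is available. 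One cosmetic remark: with $u=1+p$ the function you difference is really $x\mapsto f\bigl(u^{(p-1)x+1-K}-1\bigr)$, i.e.\ a power series in $v^x-1$ with $v=u^{p-1}$; since $v\in 1+p\bZ_p$ the factor-of-$p$-per-difference argument goes through verbatim, but the bookkeeping should be stated with $v$ rather than $u$.
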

\begin{proof}
Let $p$ be a sufficiently large prime number.
Then, by using the Kummer-type congruence proved by Zhi-Hong Sun \cite[Corollary~4.1]{Su}, we have
\begin{align*}
&\frac{B_{p^{n-1}(p-1)-k-l+1}}{k+l-1+p^{n-1}}\cdot p^l\\
&\equiv(-1)^{n+l}\sum_{j=1}^{n-l}(-1)^{j-1}\binom{p^{n-1}-1-j}{n-l-j}\binom{p^{n-1}-1}{j-1}\widehat{B}_{j(p-1)-k-l+1}\cdot p^l\pmod{p^{n}}.
\end{align*}
Since
\[
(-1)^{n+l-1}\binom{p^{n-1}-1-j}{n-l-j}\binom{p^{n-1}-1}{j-1} \equiv\binom{n-l}{j} \pmod{p^{n-1}},
\]
we have the desired formula.
\end{proof}
\subsection{Depth 1 case}
\begin{theorem}\label{thm:dep-1_general}
For positive integers $n$ and $k$, we have
\begin{align*}
\zeta^{}_{\cF_n}(k)=(-1)^k\sum_{l=1}^{n-1}\binom{k+l-1}{l}\frZ_{\cF_n}(k+l)x_{\cF_n}^l.
\end{align*}
\end{theorem}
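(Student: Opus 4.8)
The plan is to treat the two cases $\cF=\cS$ and $\cF=\cA$ separately, since the uniform statement conceals two rather different mechanisms that happen to produce identical-looking formulas. In both cases the real content is a single parity phenomenon, which I will isolate at the end.

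For $\cF=\cS$ I would argue directly from the definition of the $\widehat{\cS}$-MZV. Specializing the defining series to the depth-one index $\bk=(k)$, only the terms $i=0,1$ survive and give
\[
\zeta^\bullet_{\widehat{\cS}}(k)=\zeta^\bullet(k)+(-1)^k\sum_{l\ge0}\binom{k+l-1}{l}\zeta^\bullet(k+l)t^l.
\]
Splitting off the $l=0$ summand shows the part of degree $0$ in $t$ equals $(1+(-1)^k)\zeta^\bullet(k)$. This vanishes modulo $\zeta(2)$: it is $0$ for odd $k$, while for even $k\ge2$ one has $\zeta(k)\in\zeta(2)\cZ$, and $\zeta^\bullet(1)=0$ disposes of $k=1$. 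Applying $\pi_n$ to discard the powers $t^{\ge n}$, and recalling $\frZ_{\cS_n}(k+l)=\zeta(k+l)\bmod\zeta(2)$ and $x_{\cS_n}=t\bmod t^n$, yields the claimed identity with the sum beginning at $l=1$.

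For $\cF=\cA$ I would start from $\zeta^{}_{\cA_n}(k)=\bigl(\sum_{m=1}^{p-1}m^{-k}\bmod p^n\bigr)_p$ and, using $m^{-k}\equiv m^{\,p^{n-1}(p-1)-k}\pmod{p^n}$ for $p\nmid m$, rewrite the component at $p$ as a genuine power sum $\sum_{m=1}^{p-1}m^N$ with $N\coloneqq p^{n-1}(p-1)-k$. Expanding this power sum by the Bernoulli-polynomial (Faulhaber) formula and discarding all powers $p^{\ge n}$ leaves only the contributions of $p^1,\dots,p^{n-1}$, so that
\[
\zeta^{}_{\cA_n}(k)\equiv\sum_{l=1}^{n-1}\frac1l\binom{N}{l-1}B_{N+1-l}\,p^l\pmod{p^n}.
\]
Here $N+1-l=p^{n-1}(p-1)-(k+l)+1$ is precisely the Bernoulli index occurring in $\frZ_{\cA_n}(k+l)$, so each summand already carries the correct Bernoulli number and only the rational prefactors remain to be matched.

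The heart of the $\cA$-argument, and the step I expect to be the main obstacle, is this prefactor comparison. Using $N\equiv-k\pmod{p^{n-1}}$ one reduces $\tfrac1l\binom{N}{l-1}$ modulo $p^{n-l}$ to $(-1)^{l-1}(k+l-2)!/(l!\,(k-1)!)$, whereas the target prefactor $(-1)^k\binom{k+l-1}{l}/(k+l-1+p^{n-1})$ reduces modulo $p^{n-l}$ to $(-1)^k(k+l-2)!/(l!\,(k-1)!)$; multiplying through by $p^l$ upgrades a congruence mod $p^{n-l}$ to one mod $p^n$, which is all that is required. The signs $(-1)^{l-1}$ and $(-1)^k$ disagree in general, and reconciling them is the crux: $B_{N+1-l}$, hence $\frZ_{\cA_n}(k+l)$, vanishes for large $p$ unless its index is even, i.e.\ unless $k+l$ is odd, and on exactly that parity $(-1)^{l-1}=(-1)^k$. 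The same parity constraint annihilates the even-weight terms in the $\cS$-case, which is why the single factor $(-1)^k$ is correct uniformly. I would keep the bookkeeping clean by working throughout with $p$ sufficiently large (legitimate in $\cA_n$), invoking von Staudt--Clausen to confirm the relevant $B_{N+1-l}$ are $p$-integral, and, should a passage to ordinary Bernoulli numbers be wanted, routing the conclusion through Proposition~\ref{prop:Z_[A_n]}.
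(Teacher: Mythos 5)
Your proposal is correct and follows essentially the same route as the paper's proof: the $\cF=\cS$ case is read off from the defining series of the $\widehat{\cS}$-MZV at depth one, and the $\cF=\cA$ case combines Euler's theorem, Faulhaber's formula, the congruence reducing the binomial prefactor via $\varphi(p^n)-k\equiv -k\pmod{p^{n-1}}$, and the vanishing of odd-index Bernoulli numbers to reconcile the sign. The only difference is presentational; the paper compresses the prefactor comparison into the single congruence $\binom{\varphi(p^n)-k}{l}\equiv(-1)^l\binom{k+l-1}{l}\pmod{p^{n-1}}$, which is exactly your computation.
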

\begin{proof}
The case $\cF=\cA$ is a special case of \cite[Theorem 1]{W}.
Nevertheless, we can state the direct proof as follows.
Let $p$ be a sufficiently large prime number.
By Euler's formula and Faulhaber's formula, we have
\begin{align*}
\sum_{m=1}^{p-1}\frac{1}{m^k} &\equiv\sum_{m=1}^{p-1}m^{\varphi(p^n)-k} \\ &\equiv \frac{1}{\varphi(p^n)-k+1}\sum_{l=1}^{n-1}\binom{\varphi(p^n)-k+1}{l}B_{\varphi(p^n)-k-l+1}\cdot p^l \\ &= -\sum_{l=1}^{n-1}\binom{\varphi(p^n)-k}{l}\frac{B_{\varphi(p^n)-k-l+1}}{k+l-1+p^{n-1}}\cdot p^l \pmod{p^n},
\end{align*}
where $\varphi$ is Euler's totient function.
By a simple congruence
\[
\binom{\varphi(p^n)-k}{l} \equiv (-1)^l\binom{k+l-1}{l}\pmod{p^{n-1}}
\]
and the fact that $B_j$ vanishes for odd $j\geq 3$, we have the desired equality in $\cA_n$.
Since the case $\cF=\cS$ is clear by definition, this completes the proof.
\end{proof}
\begin{remark}
By combining the case $\cF=\cA$ of Theorem~\ref{thm:dep-1_general} and Proposition~\ref{prop:Z_[A_n]}, we have
\[
\sum_{m=1}^{p-1}\frac{1}{m^k}\equiv(-1)^k\sum_{l=1}^{n-1}\binom{k+l-1}{l}\sum_{j=1}^{n-l}(-1)^j\binom{n-l}{j}\widehat{B}_{j(p-1)-k-l+1}p^l\pmod{p^n}
\]
for a sufficiently large prime $p$.
We can check that this holds for $p\geq n+k+1$.
This congruence is a generalization of \cite[Theorem~5.1~(a) and Remark~5.1]{Su} and \cite[Theorem~2.1]{Tau}.
However, the proof is identical to that put forward by Sun.
\end{remark}
\subsection{Depth 2 case}
Let $\tau_n\colon\overline{\cZ}\jump{t}\to\overline{\cZ}[t]$ be the truncation map defined by $\tau_n(\sum_{l=0}^{\infty}z_lt^l)\coloneqq\sum_{l=0}^{n-1}z_lt^l$ for a positive integer $n$.
In the following argument, we often identify $\zeta_{\cS_n}^{\bullet}(\bk)$ with $\tau_n(\zeta_{\widehat{\cS}}^{\bullet}(\bk))$, where $\bullet\in\{\varnothing, \star\}$.
Furthermore, we often abbreviate $\zeta(\bk) \bmod\zeta(2)$ (resp.~$\zeta^{\sh}(\bk)\bmod\zeta(2)$) to $\zeta(\bk)$ (resp.~$\zeta^{\sh}(\bk)$) in $\overline{\cZ}$.
\begin{theorem}\label{S_2 for k_1 k_2}
Let $k_1$ and $k_2$ be positive integers. Assume that $k\coloneqq k_1+k_2$ is even. Then we have 
\begin{align}
\zeta^{}_{\cF_2}(k_1, k_2)&=\frac{1}{2}\left\{(-1)^{k_1}k_2\binom{k+1}{k_1}-(-1)^{k_2}k_1\binom{k+1}{k_2}-k\right\}\frZ_{\cF_2}(k+1)x_{\cF_2}, \label{eq: S2 k_1 k_2}\\
\zeta^\star_{\cF_2}(k_1, k_2)&=\frac{1}{2}\left\{(-1)^{k_1}k_2\binom{k+1}{k_1}-(-1)^{k_2}k_1\binom{k+1}{k_2}+k\right\}\frZ_{\cF_2}(k+1)x_{\cF_2}. \label{eq: S2 k_1 k_2star}
\end{align}
\end{theorem}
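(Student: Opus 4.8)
The plan is to determine $\zeta^{}_{\cF_2}(k_1,k_2)$ and $\zeta^{}_{\cF_2}(k_2,k_1)$ by two linear relations coming from the double shuffle relation (Theorem~\ref{DSRn}), fed by the depth-one evaluation (Theorem~\ref{thm:dep-1_general}) and by a depth-two evaluation of the associated $\cF_1$-MZVs. Throughout I use that $x_{\cF_2}^2=0$ in $\cF_2$: indeed $\pp_2^2=0$ in $\cA_2$ and $t^2\equiv 0\pmod{t^2}$. Consequently, for any index $\bm$ one has the collapse $\zeta^{}_{\cF_2}(\bm)\,x_{\cF_2}=\zeta^{}_{\cF_1}(\bm)\,x_{\cF_2}$, where $\zeta^{}_{\cF_1}(\bm)$ denotes the constant term (reduction modulo $t$, resp.\ modulo $p$), the product being well defined in $\cF_2$. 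By Theorem~\ref{thm:dep-1_general} and $(-1)^k=1$, one has $\zeta^{}_{\cF_2}(k)=k\,\frZ_{\cF_2}(k+1)\,x_{\cF_2}$.

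First I extract the symmetric combination. The harmonic relation \eqref{eq:harmonic_relation} applied to $e_{k_1}*e_{k_2}=e_{k_1}e_{k_2}+e_{k_2}e_{k_1}+e_{k_1+k_2}$ gives $\zeta^{}_{\cF_2}(k_1,k_2)+\zeta^{}_{\cF_2}(k_2,k_1)+\zeta^{}_{\cF_2}(k)=\zeta^{}_{\cF_2}(k_1)\zeta^{}_{\cF_2}(k_2)$; each factor on the right is a multiple of $x_{\cF_2}$, so their product is a multiple of $x_{\cF_2}^2=0$, whence
\[
\zeta^{}_{\cF_2}(k_1,k_2)+\zeta^{}_{\cF_2}(k_2,k_1)=-k\,\frZ_{\cF_2}(k+1)\,x_{\cF_2}.
\]
For the antisymmetric combination I use the reversal formula, i.e.\ the case $\bk=\varnothing$ of \eqref{eq:shuffle_relation}, applied to $\bl=(k_1,k_2)$; only $\bl'\in\{(0,0),(1,0),(0,1)\}$ contribute, and with $(-1)^k=1$ together with the collapse above I obtain
\[
\zeta^{}_{\cF_2}(k_1,k_2)-\zeta^{}_{\cF_2}(k_2,k_1)=\bigl[k_1\,\zeta^{}_{\cF_1}(k_2,k_1+1)+k_2\,\zeta^{}_{\cF_1}(k_2+1,k_1)\bigr]\,x_{\cF_2}.
\]

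The one remaining ingredient is the depth-two, weight-$(k+1)$ (odd) evaluation
\[
\zeta^{}_{\cF_1}(a,b)=(-1)^b\binom{a+b}{a}\,\frZ_{\cF_1}(a+b)\qquad(a+b\ \text{odd}).
\]
For $\cF=\cS$ this follows from $\zeta^{}_{\cS_1}(a,b)\equiv-\zeta^{\sh}(b,a)+\zeta^{\sh}(a,b)\pmod{\zeta(2)}$ (the middle product term $(-1)^b\zeta^{\sh}(a)\zeta^{\sh}(b)$ dies mod $\zeta(2)$ because, $a+b$ being odd, one of $a,b$ is even $\ge 2$ or equals $1$) together with Zagier's congruence \eqref{Zagier 2 mod zeta(2)}; when $a=1$ or $b=1$ the reversed argument is non-admissible and I first apply the shuffle regularization formula (Lemma~\ref{reg formula}) to express $\zeta^{\sh}(\,\cdot\,,1)$ through convergent MZVs. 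For $\cF=\cA$ it is the classical finite-MZV evaluation of $\sum_{0<i<j<p}i^{-a}j^{-b}$ in terms of $B_{p-a-b}$, which has exactly this shape after rewriting via $\frZ_{\cA_1}$. I then substitute into the two displayed relations, using $\binom{k+1}{k_1+1}=\binom{k+1}{k_2}$ and $\binom{k+1}{k_2+1}=\binom{k+1}{k_1}$, the parity identity $(-1)^{k_1}=(-1)^{k_2}$ (valid since $k$ is even), and $\frZ_{\cF_1}(k+1)\,x_{\cF_2}=\frZ_{\cF_2}(k+1)\,x_{\cF_2}$ (trivial for $\cS$; for $\cA$ this is the Kummer-type congruence $\frZ_{\cA_2}(k+1)\equiv\frZ_{\cA_1}(k+1)\pmod p$, cf.\ Proposition~\ref{prop:Z_[A_n]}). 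Half the sum of the two relations gives \eqref{eq: S2 k_1 k_2}, and \eqref{eq: S2 k_1 k_2star} then follows immediately from $\zeta^\star_{\cF_2}(k_1,k_2)=\zeta^{}_{\cF_2}(k_1,k_2)+\zeta^{}_{\cF_2}(k)$.

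The main obstacle is the $\cF_1$-input displayed above, and in particular making it genuinely uniform in $\cF$: the $\cS$ case rests on Zagier's formula plus a careful treatment of the index-ending-in-$1$ terms via shuffle regularization, whereas the $\cA$ case rests on a separate Bernoulli-number computation, and one must verify that both land on the same closed form and are compatible with the passage from $\frZ_{\cF_1}$ to $\frZ_{\cF_2}$. Everything else is bookkeeping with the binomial identities and the parity of $k$.
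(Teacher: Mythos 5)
Your proof is correct, and it takes a genuinely different route from the paper's. The paper treats the two cases separately: for $\cF=\cA$ it simply cites Zhao's theorem, and for $\cF=\cS$ it unwinds the definition of $\zeta_{\widehat{\cS}}$ directly, reads off the $t$-coefficient $k_2\zeta(k_2+1,k_1)+k_1\zeta(k_2,k_1+1)$, and evaluates it with Zagier's congruence \eqref{Zagier 2 mod zeta(2)} (handling $k_1=1$ separately via Lemma~\ref{reg formula}). You instead extract the symmetric part from the harmonic relation (where the product $\zeta_{\cF_2}(k_1)\zeta_{\cF_2}(k_2)$ dies because $x_{\cF_2}^2=0$) and the antisymmetric part from the reversal formula, which collapses the correction terms to $\cF_1$-double zeta values of odd weight $k+1$; the known evaluation $\zeta_{\cF_1}(a,b)=(-1)^b\binom{a+b}{a}\frZ_{\cF_1}(a+b)$ (which the paper itself invokes elsewhere, citing \cite[(7.2), Example 9.4 (2)]{Kan}) then finishes the computation, together with the ``In particular'' clause of Proposition~\ref{prop:Z_[A_n]} to pass from $\frZ_{\cA_1}(k+1)\pp_2$ to $\frZ_{\cA_2}(k+1)\pp_2$. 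What your approach buys is uniformity in $\cF$ and a structural reduction of the $\cF_2$ depth-two problem to the $\cF_1$ depth-two problem plus the double shuffle relation --- in particular you need only the level-one (not level-two) $\cA$-evaluation. What it costs is that the hard analytic content for $\cS$ (Zagier's formula plus the shuffle-regularization of indices ending in $1$) is not avoided, merely relocated into the $\cF_1$ input, which you justify by sketch/citation rather than in full; since that input is standard and is used as known elsewhere in the paper, this is an acceptable level of detail. All the individual steps I checked (the harmonic expansion of $e_{k_1}*e_{k_2}$, the three terms $\bl'\in\{(0,0),(1,0),(0,1)\}$ of the reversal formula with coefficients $1,k_1,k_2$, the binomial symmetries, and the final averaging) are correct and reproduce \eqref{eq: S2 k_1 k_2} and \eqref{eq: S2 k_1 k_2star} exactly.
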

\begin{proof}
The case $\cF=\cA$ was proved by Zhao, see \cite[Theorem 3.2]{Zh}.
Hereafter, we consider the case $\cF=\cS$.
First, we prove \eqref{eq: S2 k_1 k_2} for the case $k_1\geq2$. By the definition of $\zeta^{}_{\cS_2}(k_1,k_2)$, we have
\begin{align*} 
\zeta^{}_{\cS_2}(k_1, k_2)
=\zeta^{}_{\cS_1}(k_1, k_2)+\{k_2\zeta(k_2+1, k_1)+k_1\zeta(k_2, k_1+1)\}t.
\end{align*}
Since $k_1+k_2$ is even, we have $\zeta^{}_{\cS_1}(k_1, k_2)=0$ by definition.
Therefore, by using \eqref{Zagier 2 mod zeta(2)}, we obtain \eqref{eq: S2 k_1 k_2} for the case $k_1 \ge2$.
Next, we prove \eqref{eq: S2 k_1 k_2} for the case $k_1=1$ (then $k_2$ is odd).
We have
\begin{align}\label{by def}
\zeta^{}_{\cS_2}(1, k_2)=\{k_2\zeta^{\sh}(k_2+1, 1)+\zeta(k_2, 2)\}t.
\end{align}
By applying Theorem \ref{reg formula} for $w=e_1e^{k_2}_0$ and the sum formula for MZVs of depth~$2$, we have
\begin{equation}\label{calc of zeta sh}
\zeta^{\sh}(k_2+1, 1)=-\zeta(k_2, 2)-\cdots-\zeta(2, k_2)-2\zeta(1, k_2+1)=-\zeta(k_2+2)-\zeta(1, k_2+1).
\end{equation}
By \eqref{Zagier 2 mod zeta(2)}, we have
\begin{equation}\label{after Zagier}
\zeta(1, k_2+1)=\frac{k_2+1}{2}\zeta(k_2+2), \quad \zeta(k_2, 2)=\frac{1}{2}\left\{\frac{(k_2+2)(k_2+1)}{2}-1\right\}\zeta(k_2+2).
\end{equation}
From \eqref{by def}, \eqref{calc of zeta sh}, \eqref{after Zagier}, we obtain \eqref{eq: S2 k_1 k_2} for the case $k_1=1$. 
The formula \eqref{eq: S2 k_1 k_2star} follows from \eqref{eq: S2 k_1 k_2}, the fact $\zeta^\star_{\cS_2}(k_1, k_2)=\zeta^{}_{\cS_2}(k_1, k_2)+\zeta^{}_{\cS_2}(k_1+k_2)$, and $\zeta^{}_{\cS_2}(k)=(-1)^kk\zeta(k+1)t$ (Theorem~\ref{thm:dep-1_general} with $\cF_n=\cS_2$). 
\end{proof}
\subsection{Depth 3 case}
\begin{theorem}
Let $k_1, k_2, k_3$ be positive integers. Suppose that $k \coloneqq k_1+k_2+k_3$ is odd. Then we have
\begin{align*}
\zeta^{}_{\cF_1}(k_1, k_2, k_3)
=-\zeta^{\star}_{\cF_1}(k_1, k_2, k_3)
=\frac{1}{2}\left\{(-1)^{k_1}\binom{k}{k_1}-(-1)^{k_3}\binom{k}{k_3}\right\}\frZ_{\cF_1}(k).
\end{align*}
\end{theorem}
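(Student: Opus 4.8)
The plan is to reduce the depth-$3$ statement to the depth-$2$ and depth-$1$ cases, using only combinatorial relations that hold uniformly in $\cF_1$. Two facts will be used throughout. First, since $n=1$ the sum in Theorem~\ref{thm:dep-1_general} is empty, so every depth-$1$ value vanishes: $\zeta^{}_{\cF_1}(k)=0$. Second, the case $\bk=\varnothing$ of the shuffle relation in Theorem~\ref{DSRn} (the reversal formula) gives $\zeta^{}_{\cF_1}(l_1,\dots,l_s)=(-1)^{l_1+\cdots+l_s}\zeta^{}_{\cF_1}(l_s,\dots,l_1)$; applying this termwise to the definition of the star value yields the reversal formula for star values, $\zeta^{\star}_{\cF_1}(l_s,\dots,l_1)=(-1)^{l_1+\cdots+l_s}\zeta^{\star}_{\cF_1}(l_1,\dots,l_s)$.

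First I would establish the left-hand equality. Apply Proposition~\ref{antipode} to $\bk=(k_1,k_2,k_3)$. The $i=1$ term carries the factor $\zeta^{}_{\cF_1}(k_1)=0$ and the $i=2$ term carries the factor $\zeta^{\star}_{\cF_1}(k_3)=\zeta^{}_{\cF_1}(k_3)=0$, so only $i=0$ and $i=3$ survive, giving $\zeta^{\star}_{\cF_1}(k_3,k_2,k_1)=\zeta^{}_{\cF_1}(k_1,k_2,k_3)$. Since $k$ is odd, the reversal formula for star values rewrites the left side as $-\zeta^{\star}_{\cF_1}(k_1,k_2,k_3)$, which is exactly $\zeta^{}_{\cF_1}(k_1,k_2,k_3)=-\zeta^{\star}_{\cF_1}(k_1,k_2,k_3)$.

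Next I would compute the common value. Expanding the star value by its definition and substituting the identity just proved gives
\begin{align*}
2\zeta^{}_{\cF_1}(k_1,k_2,k_3)=-\zeta^{}_{\cF_1}(k_1+k_2,k_3)-\zeta^{}_{\cF_1}(k_1,k_2+k_3)-\zeta^{}_{\cF_1}(k),
\end{align*}
where the last term vanishes as a depth-$1$ value. Everything thus reduces to the depth-$2$ odd-weight evaluation $\zeta^{}_{\cF_1}(a,b)=(-1)^{a+1}\binom{a+b}{a}\frZ_{\cF_1}(a+b)$ (for $a+b$ odd), applied to $(a,b)=(k_1+k_2,k_3)$ and $(a,b)=(k_1,k_2+k_3)$. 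Substituting these, using $\binom{k}{k_1+k_2}=\binom{k}{k_3}$ together with the parity identity $(-1)^{k_1+k_2+1}=(-1)^{k_3}$ (valid because $k$ is odd), collapses the right-hand side to $\frac{1}{2}\{(-1)^{k_1}\binom{k}{k_1}-(-1)^{k_3}\binom{k}{k_3}\}\frZ_{\cF_1}(k)$, as claimed.

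It remains to supply the depth-$2$ input, and this is where the only real work lies. For $\cF=\cA$ this is the classical finite double zeta value formula and can be cited. For $\cF=\cS$ I would argue from the definition of the symmetric MZV: when $a+b$ is odd one checks $\zeta^{}_{\cS_1}(a,b)\equiv\zeta^{\bullet}(a,b)-\zeta^{\bullet}(b,a)\pmod{\zeta(2)}$, the product term dropping out because one of $a,b$ is even and $\zeta(\text{even}\ge 2)\in\zeta(2)\cZ$. The main obstacle is that one of these two double values is non-admissible precisely when $k_3=1$, so the shuffle regularization must be handled in that boundary case; I expect to treat it exactly as in the $k_1=1$ case of Theorem~\ref{S_2 for k_1 k_2}, using Lemma~\ref{reg formula} to rewrite $\zeta^{\sh}(a,1)$ and then applying \eqref{Zagier 2 mod zeta(2)} to the resulting admissible pieces. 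For admissible arguments one applies \eqref{Zagier 2 mod zeta(2)} directly and simplifies with $\binom{k}{a}=\binom{k}{b}$ and the odd-weight parity, which produces exactly $(-1)^{a+1}\binom{a+b}{a}\frZ_{\cS_1}(a+b)$, completing the reduction.
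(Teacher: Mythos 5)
Your argument is correct and follows essentially the same route as the paper: Proposition~\ref{antipode} together with the reversal formula gives $\zeta^{}_{\cF_1}(k_1,k_2,k_3)=-\zeta^{\star}_{\cF_1}(k_1,k_2,k_3)$, and expanding the star value reduces everything to the odd-weight $\cF_1$-double zeta evaluation. The only divergence is that the paper simply cites that depth-$2$ evaluation from \cite{Kan}, whereas you sketch a proof of it for $\cF=\cS$; the sketch is viable, but note that the non-admissible boundary case arises when $k_1=1$ as well as when $k_3=1$ (since $\zeta^{\bullet}(k_2+k_3,1)$ then appears), so the regularization step via Lemma~\ref{reg formula} is needed in both situations, not only the one you name.
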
 
\begin{proof}
The case $\cF=\cA$ was proved by Hoffman and Zhao; see \cite[Theorem~6.2]{H2} or \cite[Theorem~ 3.5]{Zh}.
Hereafter, we consider the case $\cF=\cS$. By Proposition \ref{antipode} and the reversal formula for $\cF_1$-MZVs, we have
\begin{equation}\label{eq:calc_dep3}
\zeta^{\star}_{\cF_1}(k_1,k_2,k_3)
=(-1)^{k_1+k_2+k_3}\zeta_{\cF_1}(k_1,k_2,k_3)
=-\zeta^{}_{\cF_1}(k_1,k_2,k_3).
\end{equation}
From
\begin{equation*}
\zeta^{\star}_{\cF_1}(k_1,k_2,k_3)
=\zeta^{}_{\cF_1}(k_1,k_2,k_3)
+\zeta^{}_{\cF_1}(k_1+k_2,k_3)
+\zeta^{}_{\cF_1}(k_1,k_2+k_3)
\end{equation*}
and the explicit formula for $\cF_1$-double zeta values \cite[(7.2), Example 9.4 (2)]{Kan}, we have
\begin{align} \label{eq:F1dep3}
\zeta^{}_{\cF_1}(k_1,k_2,k_3)
&=-\frac{\zeta^{}_{\cF_1}(k_1+k_2,k_3)+\zeta^{}_{\cF_1}(k_1,k_2+k_3)}{2}\\
&=-\frac{1}{2}
\left\{
(-1)^{k_3}\binom{k}{k_1+k_2}+(-1)^{k_2+k_3}\binom{k}{k_1}
\right\}
\frZ_{\cF_1}(k) \nonumber \\
&=\frac{1}{2}
\left\{
(-1)^{k_1}\binom{k}{k_1}-(-1)^{k_3}\binom{k}{k_3}
\right\}
\frZ_{\cF_1}(k). \nonumber
\end{align}
The formula for $\zeta^{\star}_{\cF_1}(k_1,k_2,k_3)$ is obtained by \eqref{eq:calc_dep3} and \eqref{eq:F1dep3}.
\end{proof}
\subsection{General depth case}
\begin{theorem}\label{rep F3}
For positive integers $r,k$ and for $\cF \in \{\cA, \cS\}$, we have
\begin{align}
\zeta^{}_{\cF_2}(\{k\}^r)&=(-1)^{r-1}k\frZ_{\cF_2}(rk+1)x_{\cF_2}, \label{rep.k non star} \\
\zeta^\star_{\cF_2}(\{k\}^r)&=k\frZ_{\cF_2}(rk+1)x_{\cF_2}. \label{rep.k star}
\end{align}
Moreover, we have
\begin{multline}\label{eq: rep.k F3}
\zeta^{}_{\cF_3}(\{k\}^r)=(-1)^{rk+r-1}\Biggl[k\frZ_{\cF_3}(rk+1)x_{\cF_3}+\\
\left\{\frac{k(rk+1)}{2}\frZ_{\cF_3}(rk+2)-k^2\sum_{l=1}^{r-1}\frZ_{\cF_3}(lk+1)\frZ_{\cF_3}((r-l)k+1)\right\}x_{\cF_3}^2\Biggr]
\end{multline}
and 
\begin{multline}\label{eq: rep.k F3 star}
\zeta^\star_{\cF_3}(\{k\}^r)=(-1)^{rk}\Biggl[k\frZ_{\cF_3}(rk+1)x_{\cF_3}+\\
\Biggl\{\frac{k(rk+1)}{2}\frZ_{\cF_3}(rk+2)
+k^2\sum_{l=1}^{r-1}\frZ_{\cF_3}(lk+1)\frZ_{\cF_3}((r-l)k+1)\Biggr\}x_{\cF_3}^2\Biggr].
\end{multline}
\end{theorem}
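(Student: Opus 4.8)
The plan is to treat $\cF=\cA$ and $\cF=\cS$ at once by packaging the values into generating series in an auxiliary variable $X$, exploiting that $x_{\cF_n}^{n}=0$ forces every series below to be a polynomial of $x_{\cF_n}$-degree $<n$. Set
\[
A(X)=\sum_{r\ge0}(-1)^r\zeta^{}_{\cF_n}(\{k\}^r)X^r,\qquad B(X)=\sum_{r\ge0}\zeta^{\star}_{\cF_n}(\{k\}^r)X^r.
\]
Applying Proposition~\ref{antipode} to the index $\{k\}^r$ and summing over $r$ against $X^r$ shows that these series are mutually inverse, $A(X)B(X)=1$. Hence it suffices to determine the star series $B(X)$; the non-star formulas follow by inverting, which for $n=2,3$ is the finite computation $A=B^{-1}$ modulo $x_{\cF_n}^{n}$.

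The heart of the argument is a Newton-type recursion for the star values, namely $r\,\zeta^{\star}_{\cF_n}(\{k\}^r)=\sum_{m=1}^{r}\zeta^{}_{\cF_n}(mk)\,\zeta^{\star}_{\cF_n}(\{k\}^{r-m})$, equivalently $B(X)=\exp\bigl(\sum_{m\ge1}\tfrac1m\zeta^{}_{\cF_n}(mk)X^m\bigr)$. Since the harmonic relation \eqref{eq:harmonic_relation} says exactly that $Z_{\cF_n}$ is an algebra homomorphism for $\ast$, and since $\zeta^{\star}_{\cF_n}(\{k\}^{j})=Z_{\cF_n}\bigl(d(e_k^{\,j})\bigr)$, where $d(e_k^{\,j})\coloneqq\sum_{a_1+\cdots+a_s=j}e_{a_1k}\cdots e_{a_sk}$ is the composition sum defining the star value, the recursion reduces to the purely combinatorial identity $\sum_{m=1}^{r}e_{mk}\ast d(e_k^{\,r-m})=r\,d(e_k^{\,r})$ in $\frH^1_{\ast}$. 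I would prove this by expanding the quasi-shuffle product: each composition of $r$ arises from a composition of $r-m$ either by inserting the block $e_{mk}$ into one of the gaps or by merging it into an existing block, and a short count shows every composition of $r$ occurs with total multiplicity exactly $r$.

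With $B(X)=\exp\bigl(\Phi(X)\bigr)$ and $\Phi(X)=\sum_{m\ge1}\tfrac1m\zeta^{}_{\cF_n}(mk)X^m$, I would substitute the depth-one evaluation of Theorem~\ref{thm:dep-1_general}, giving $\tfrac1m\zeta^{}_{\cF_n}(mk)=(-1)^{mk}\sum_{l=1}^{n-1}\tfrac1m\binom{mk+l-1}{l}\frZ_{\cF_n}(mk+l)x_{\cF_n}^{l}$, so $\Phi$ is $O(x_{\cF_n})$. For $n=2$ one has $\exp(\pm\Phi)=1\pm\Phi$ modulo $x_{\cF_2}^{2}$, and reading off the coefficient of $X^r$ yields \eqref{rep.k star} and \eqref{rep.k non star} at once; the sign $(-1)^{rk}$ is harmless because $\frZ_{\cF_n}(j)=0$ whenever $j$ is even. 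For $n=3$ one has $\exp(\pm\Phi)=1\pm\Phi+\tfrac12\Phi^{2}$ modulo $x_{\cF_3}^{3}$: the linear part reproduces the $\frZ_{\cF_3}(rk+1)$- and $\frZ_{\cF_3}(rk+2)$-terms, while the quadratic part $\tfrac12\Phi^{2}$ contributes precisely the convolution $\sum_{l=1}^{r-1}\frZ_{\cF_3}(lk+1)\frZ_{\cF_3}((r-l)k+1)$, with opposite signs in $B$ and $A$ coming from $\exp(\pm\Phi)$. This gives \eqref{eq: rep.k F3 star} and \eqref{eq: rep.k F3}.

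The main obstacle is the harmonic-algebra identity $\sum_{m}e_{mk}\ast d(e_k^{\,r-m})=r\,d(e_k^{\,r})$: one must match the insertion-and-merging terms of the quasi-shuffle against the composition sum defining the star value and confirm the multiplicity $r$. Everything afterward is routine bookkeeping of binomial coefficients and signs in the extraction, straightforward for $n\le3$; as a consistency check, the resulting $\cF=\cA$ formulas recover Zhao's computations of $\zeta_{\cA_2}(\{k\}^r)$ and $\zeta_{\cA_3}(\{k\}^r)$.
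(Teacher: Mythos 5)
Your route is genuinely different from the paper's. The paper specializes its $\cF_n$-symmetric sum formula (Theorem~\ref{sym sum}) to $\bk=(\{k\}^r)$, so that the left-hand side becomes $r!\,\zeta^{}_{\cF_3}(\{k\}^r)$, and observes that modulo $x_{\cF_3}^3$ only the partitions with at most two blocks survive; you instead package the same symmetric-function yoga as Newton's identities, writing $B(X)=\exp(\Phi(X))$ with $\Phi(X)=\sum_{m\ge1}\frac{1}{m}\zeta^{}_{\cF_n}(mk)X^m$ and $A(X)=B(X)^{-1}$. Your supporting steps check out: Proposition~\ref{antipode} applied to $(\{k\}^r)$ does give $A(X)B(X)=1$; your quasi-shuffle identity $\sum_{m=1}^r e_{mk}\ast d(e_k^{r-m})=r\,d(e_k^{r})$ is correct (a composition $(b_1,\dots,b_u)$ of $r$ arises $u$ times by insertion and $\sum_{j}(b_j-1)=r-u$ times by merging); and \eqref{eq:harmonic_relation} converts it into the recursion $r\,\zeta^{\star}_{\cF_n}(\{k\}^r)=\sum_{m=1}^r\zeta^{}_{\cF_n}(mk)\,\zeta^{\star}_{\cF_n}(\{k\}^{r-m})$. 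For $n=2$ the extraction reproduces \eqref{rep.k non star} and \eqref{rep.k star} exactly. What your approach buys is independence from Theorem~\ref{sym sum} (you use only the antipode and the harmonic relation), at the cost of proving the insertion--merging count; the two are of comparable difficulty.

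The one real problem is your final step for $n=3$. The coefficient of $X^r$ in $\frac12\Phi(X)^2$ equals $(-1)^{rk}\frac{k^2}{2}\sum_{l=1}^{r-1}\frZ_{\cF_3}(lk+1)\frZ_{\cF_3}((r-l)k+1)\,x_{\cF_3}^2$; the factor $\frac12$ does not cancel, so your method yields the convolution coefficient $\frac{k^2}{2}$, not the $k^2$ appearing in \eqref{eq: rep.k F3} and \eqref{eq: rep.k F3 star}, and your assertion that the quadratic part ``contributes precisely the convolution'' glosses over this. Do not adjust your computation to match the display: $\frac{k^2}{2}$ is the correct value. Indeed, for $r=k=2$ the harmonic relation gives $2\zeta^{}_{\cF_3}(2,2)+\zeta^{}_{\cF_3}(4)=\zeta^{}_{\cF_3}(2)^2$, and Theorem~\ref{thm:dep-1_general} gives $\zeta^{}_{\cF_3}(2)=2\frZ_{\cF_3}(3)x_{\cF_3}$ and $\zeta^{}_{\cF_3}(4)=4\frZ_{\cF_3}(5)x_{\cF_3}$ (the even-argument $\frZ$'s vanish), whence $\zeta^{}_{\cF_3}(2,2)=-2\frZ_{\cF_3}(5)x_{\cF_3}+2\frZ_{\cF_3}(3)^2x_{\cF_3}^2$, whereas \eqref{eq: rep.k F3} predicts the coefficient $4\frZ_{\cF_3}(3)^2$. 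The discrepancy originates in the paper's own proof, which counts the unordered two-block partitions $\{B_1,B_2\}$ of Theorem~\ref{sym sum} as ordered pairs ($\binom{r}{l}$ for each $1\le l\le r-1$), overcounting by $2$. So your argument is sound, but you should state explicitly that it proves \eqref{eq: rep.k F3} and \eqref{eq: rep.k F3 star} with $k^2$ replaced by $\frac{k^2}{2}$, which is the corrected form of the theorem.
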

\begin{remark}
If $rk$ is odd, then $\frZ_{\cF_3}(rk+1)$ and $\frZ_{\cF_3}(lk+1)\frZ_{\cF_3}((r-l)k+1)$ are 0, and we have
\begin{align*}
\zeta^{}_{\cF_3}(\{k\}^r)=(-1)^r\frac{k(rk+1)}{2}\frZ_{\cF_3}(rk+2)x_{\cF_3}^2, \quad \zeta^\star_{\cF_3}(\{k\}^r)=-\frac{k(rk+1)}{2}\frZ_{\cF_3}(rk+2)x_{\cF_3}^2.
\end{align*}
These formulas for the case $\cF=\cA$ were first proved by Zhou and Cai in the last remark of \cite{ZC} but our proof differs from theirs.
\end{remark}
\begin{proof}
Since \eqref{rep.k non star} and \eqref{rep.k star} follows from \eqref{eq: rep.k F3} and \eqref{eq: rep.k F3 star} by taking modulo $x^2_{\cF_3}$, it is sufficient to prove \eqref{eq: rep.k F3} and \eqref{eq: rep.k F3 star}. 
Note that $(-1)^{rk}\frZ_{\cF_2}(rk+1)x_{\cF_2}=\frZ_{\cF_2}(rk+1)x_{\cF_2}$ holds because if $rk$ is odd, then $\frZ_{\cF_2}(rk+1)x_{\cF_2}=0$.

By Theorem~\ref{thm:dep-1_general} and the symmetric sum formula~\eqref{sym sum} proved in Section~\ref{sec:sum_formulas} with $\bk=(\{k\}^r)$, we have
\begin{equation}\label{F3 explicit}
\begin{split}
&r!\zeta^{}_{\cF_3}(\{k\}^r)\\
&=(-1)^{rk+r-1}(r-1)!\left\{rk\frZ_{\cF_3}(rk+1)x_{\cF_3}+\binom{rk+1}{2}\frZ_{\cF_3}(rk+2)x_{\cF_3}^2\right\}\\
&\quad +(-1)^{rk+r-2}\sum_{\substack{B_1 \sqcup B_2=\{1, \ldots, r\} \\ B_1,B_2\neq\varnothing}}(\#B_1-1)!(\#B_2-1)!b_1b_2\frZ_{\cF_3}(b_1+1)\frZ_{\cF_3}(b_2+1)x_{\cF_3}^2,
\end{split}
\end{equation}
where $b_1=b_1(\{k\}^r)$ and $b_2=b_2(\{k\}^r)$ are defined as in Theorem~\ref{sym sum non star}.
Set $l\coloneqq \#B_1$. Then we see that $1\leq l \leq r-1$, $\#B_2=r-l$, $b_1=lk$ and $b_2=(r-l)k$. Moreover, the number of ways of dividing $\{1, \ldots, r\}$ into two non-empty subsets $B_1$ and $B_2$ with $\#B_1=l$ is just $\binom{r}{l}$. Therefore, the summation for the partition in the right-hand side of \eqref{F3 explicit} coincides with 
\begin{align*}
&\sum_{l=1}^{r-1}\binom{r}{l}(l-1)!(r-l-1)!\cdot l(r-l)k^2\frZ_{\cF_3}(lk+1)\frZ_{\cF_3}((r-l)k+1)x_{\cF_3}^2\\
&=k^2\cdot r!\sum_{l=1}^{r-1}\frZ_{\cF_3}(lk+1)\frZ_{\cF_3}((r-l)k+1)x_{\cF_3}^2.
\end{align*}
Thus we obtained \eqref{eq: rep.k F3}.
The formula \eqref{eq: rep.k F3 star} is obtained in the same manner.
\end{proof}
\begin{theorem}\label{S1 1 2 1}
For non-negative integers $a$ and $b$, we have
\begin{align}
\zeta^{}_{\cF_1}(\{1\}^a, 2, \{1\}^b)
=(-1)^b\binom{a+b+2}{a+1}\frZ_{\cF_1}(a+b+2), \label{eq: S1 121}\\
\zeta^\star_{\cF_1}(\{1\}^a, 2, \{1\}^b)
=(-1)^b\binom{a+b+2}{a+1}\frZ_{\cF_1}(a+b+2). \label{eq: S1 121star}
\end{align}
\end{theorem}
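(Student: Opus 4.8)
The plan is to prove \eqref{eq: S1 121} and \eqref{eq: S1 121star} by treating $\cF=\cA$ through known results on finite multiple harmonic sums and $\cF=\cS$ by an explicit shuffle regularization, and then to reduce the star identity \eqref{eq: S1 121star} to the non-star identity \eqref{eq: S1 121} uniformly in $\cF$ by means of Proposition~\ref{antipode}.

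For the non-star identity with $\cF=\cS$, I would start from the defining expression of $\zeta^{}_{\widehat{\cS}}$ with $\bullet=\sh$; reducing modulo $t$ and $\zeta(2)$ gives
\[
\zeta^{}_{\cS_1}(\{1\}^a,2,\{1\}^b)\equiv\sum_{i=0}^{r}(-1)^{k_{i+1}+\cdots+k_r}\zeta^{\sh}(k_1,\dots,k_i)\zeta^{\sh}(k_r,\dots,k_{i+1})\pmod{\zeta(2)},
\]
where $r=a+b+1$ and $(k_1,\dots,k_r)=(\{1\}^a,2,\{1\}^b)$. Since the single entry $2$ lies on one side of each cut, for every $i$ either the prefix $(k_1,\dots,k_i)$ or the reversed suffix $(k_r,\dots,k_{i+1})$ consists only of $1$'s; as $\reg_{\sh}(e_1^i)=0$ for $i\ge1$ forces $\zeta^{\sh}(\{1\}^i)=0$, all terms vanish except $i=0$ and $i=r$, leaving
\[
\zeta^{}_{\cS_1}(\{1\}^a,2,\{1\}^b)\equiv\zeta^{\sh}(\{1\}^a,2,\{1\}^b)+(-1)^{a+b}\zeta^{\sh}(\{1\}^b,2,\{1\}^a)\pmod{\zeta(2)}.
\]
To evaluate each summand I write $e_{(\{1\}^a,2,\{1\}^b)}=e_1^{a+1}e_0e_1^b$ and apply Lemma~\ref{reg formula} with $w'=e_1^{a+1}$ and $m=b$, obtaining $\reg_{\sh}(e_1^{a+1}e_0e_1^b)=(-1)^b\binom{a+b+1}{b}e_1^{a+b+1}e_0$; since $e_1^{a+b+1}e_0=e_{(\{1\}^{a+b},2)}$ and $\zeta(\{1\}^{a+b},2)=\zeta(a+b+2)$ by duality, this gives $\zeta^{\sh}(\{1\}^a,2,\{1\}^b)=(-1)^b\binom{a+b+1}{b}\zeta(a+b+2)$. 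Summing the two surviving terms and using $\binom{a+b+1}{b}+\binom{a+b+1}{a}=\binom{a+b+2}{a+1}$ yields \eqref{eq: S1 121} for $\cF=\cS$, because $\frZ_{\cS_1}(a+b+2)=\zeta(a+b+2)\bmod\zeta(2)$.

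For the star identity I would first record $\zeta^{}_{\cF_1}(\{1\}^j)=0$ for $j\ge1$: for $\cF=\cA$ this follows from $\prod_{m=1}^{p-1}(1+t/m)\equiv 1-t^{p-1}\pmod p$, which makes the elementary symmetric sums vanish, and for $\cF=\cS$ the same collapse as above gives $\zeta^{}_{\cS_1}(\{1\}^j)\equiv(1+(-1)^j)\zeta^{\sh}(\{1\}^j)=0$. Feeding this into Proposition~\ref{antipode} for the index $(\{1\}^j)$ kills every term but $i=0$ and produces $\zeta^{\star}_{\cF_1}(\{1\}^j)=0$. Now I apply Proposition~\ref{antipode} to $(\{1\}^a,2,\{1\}^b)$: again each cut puts the entry $2$ on one side, so for $i\le a$ the non-star factor is $\zeta^{}_{\cF_1}(\{1\}^i)$ and for $i\ge a+1$ the star factor is $\zeta^{\star}_{\cF_1}(\{1\}^{r-i})$, both vanishing except at $i=0$ and $i=r$. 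The surviving relation is $\zeta^{\star}_{\cF_1}(\{1\}^b,2,\{1\}^a)+(-1)^{r}\zeta^{}_{\cF_1}(\{1\}^a,2,\{1\}^b)=0$, i.e.\ $\zeta^{\star}_{\cF_1}(\{1\}^a,2,\{1\}^b)=(-1)^{a+b}\zeta^{}_{\cF_1}(\{1\}^b,2,\{1\}^a)$ after swapping $a$ and $b$; substituting \eqref{eq: S1 121} and using $\binom{a+b+2}{b+1}=\binom{a+b+2}{a+1}$ gives \eqref{eq: S1 121star} for both $\cF$.

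The main obstacle is the non-star $\cF=\cS$ computation: getting the shuffle count $e_1^{a+1}\sh e_1^b=\binom{a+b+1}{b}e_1^{a+b+1}$ and the trailing sign from Lemma~\ref{reg formula} exactly right, and then matching the two-term sum to the single binomial $\binom{a+b+2}{a+1}$ via Pascal's rule. By contrast, the conceptual step—observing that both the symmetric sum and the antipode relation collapse to their two extreme terms because the lone entry $2$ forces an all-$1$'s block on one side—is precisely what lets the star case fall out of the non-star case with no extra computation.
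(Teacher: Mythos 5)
Your proposal is correct and follows essentially the same route as the paper: for $\cF=\cS$ it collapses the defining sum to the two extreme terms using $\zeta^{\sh}(\{1\}^i)=0$, evaluates $\zeta^{\sh}(\{1\}^a,2,\{1\}^b)$ via Lemma~\ref{reg formula} and duality, and deduces the star case from Proposition~\ref{antipode} together with $\zeta^{}_{\cF_1}(\{1\}^r)=\zeta^{\star}_{\cF_1}(\{1\}^r)=0$, while the $\cF=\cA$ case rests on the known result of Hessami-Pilehrood--Hessami-Pilehrood--Tauraso. The only cosmetic difference is that you rederive the vanishing of $\zeta^{\star}_{\cF_1}(\{1\}^r)$ from the antipode rather than quoting it as a special case of Theorem~\ref{rep F3}.
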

\begin{proof}
The case $\cF=\cA$ was proved by Hessami-Pilehrood--Hessami-Pilehrood--Tauraso; see \cite[Theorem~4.5]{HHT}.
Hereafter, we consider the case $\cF=\cS$. 
By the definition of $\zeta^{}_{\cS_1}(\bk)$ and the fact that  $\zeta^{\sh}(\{1\}^k)=0$ for $k\geq 1$, we have
\begin{align}\label{by def for a b}
\zeta^{}_{\cS_1}(\{1\}^a, 2, \{1\}^b)=\zeta^{\sh}(\{1\}^a, 2, \{1\}^b)+(-1)^{a+b}\zeta^{\sh}(\{1\}^b, 2, \{1\}^a).
\end{align}
Applying Lemma~\ref{reg formula} for $w=e^{a+1}_1e_0$ and $m=b$, and using the duality for MZVs, we have
\begin{align}\label{reg form for a b}
\zeta^{\sh}(\{1\}^a, 2, \{1\}^b)=(-1)^b\binom{a+b+1}{b}\zeta(a+b+2).
\end{align}
From \eqref{by def for a b} and \eqref{reg form for a b}, we obtain \eqref{eq: S1 121}.
The formula \eqref{eq: S1 121star} follows from \eqref{eq: S1 121}, Proposition \ref{antipode} and the fact that $\zeta^{}_{\cS_1}(\{1\}^r)=\zeta^{\star}_{\cS_1}(\{1\}^r)=0$ for $r\geq 1$.
The last fact is well-known and a special case of Theorem~\ref{rep F3}.
\end{proof}
\begin{remark}
We can also prove \eqref{eq: S1 121star} using the Hoffman duality (\cite[Theorem~4.6]{H2} and \cite[Corollarie~1.12]{J1}) and the explicit formula for $\zeta^{}_{\cF_1}(a+1, b+1)$. 
\end{remark}
\begin{theorem}\label{S1 for 2 3 2}
For non-negative integers $a$ and $b$, we have
\begin{align}
\zeta^{}_{\cF_1}(\{2\}^a, 3, \{2\}^b)&=\frac{(-1)^{a+b}2(a-b)}{a+1}\binom{2a+2b+3}{2b+2}\frZ_{\cF_1}(2a+2b+3),\label{S1 for 2 3 2 non star} \\
\zeta^\star_{\cF_1}(\{2\}^a, 3, \{2\}^b)&=\frac{2(b-a)}{a+1}\binom{2a+2b+3}{2b+2}\frZ_{\cF_1}(2a+2b+3). \label{S1 for 2 3 2 star}
\end{align}
\end{theorem}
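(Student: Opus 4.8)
The plan is to cite the known case $\cF=\cA$ and to establish the case $\cF=\cS$ directly, in the same spirit as the earlier depth computations. Throughout I use that for $n=1$ the definition of the $\widehat{\cS}$-MZV specializes to its constant term in $t$, so that
\[
\zeta^{}_{\cS_1}(\bk)=\sum_{i=0}^{r}(-1)^{k_{i+1}+\cdots+k_r}\zeta^{\sh}(k_1,\dots,k_i)\,\zeta^{\sh}(k_r,\dots,k_{i+1})\bmod{\zeta(2)}.
\]
For $\bk=(\{2\}^a,3,\{2\}^b)$ (so $r=a+b+1$ and $\wt(\bk)=2a+2b+3$ is odd), each cut splits $\bk$ into one segment containing the entry $3$ and one consisting only of $2$'s; since $\zeta(\{2\}^m)\equiv0\pmod{\zeta(2)}$ for $m\ge1$, only the cuts $i=0$ and $i=r$, where the all-$2$ segment is empty, survive. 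As both surviving segments are admissible, $\zeta^{\sh}$ equals the convergent value, and
\[
\zeta^{}_{\cS_1}(\{2\}^a,3,\{2\}^b)\equiv \zeta(\{2\}^a,3,\{2\}^b)-\zeta(\{2\}^b,3,\{2\}^a)\pmod{\zeta(2)}.
\]

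Next I would substitute Zagier's evaluation \eqref{Zagier 1 mod zeta(2)} into each term. The pleasant point is that the contributions of $1/4^{a+b+1}$ cancel, because $\binom{2a+2b+2}{2b+1}=\binom{2a+2b+2}{2a+1}$ by the symmetry $\binom{N}{k}=\binom{N}{N-k}$; what survives is
\[
2(-1)^{a+b+1}\left\{\binom{2a+2b+2}{2a+2}-\binom{2a+2b+2}{2b+2}\right\}\frZ_{\cF_1}(2a+2b+3).
\]
To match \eqref{S1 for 2 3 2 non star} it then remains to prove the elementary identity
\[
\binom{2a+2b+2}{2b+2}-\binom{2a+2b+2}{2a+2}=\frac{a-b}{a+1}\binom{2a+2b+3}{2b+2}.
\]
I would verify this by rewriting $\binom{2a+2b+2}{2a+2}=\binom{2a+2b+2}{2b}$, factoring $\binom{2a+2b+2}{2b}$ out of the left-hand side (the numerator $(2a+2)(2a+1)-(2b+2)(2b+1)$ simplifies to $2(a-b)(2a+2b+3)$), and comparing with the right-hand side after expressing $\binom{2a+2b+3}{2b+2}$ through $\binom{2a+2b+2}{2b}$. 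This yields \eqref{S1 for 2 3 2 non star}.

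For the star identity \eqref{S1 for 2 3 2 star} I would apply Proposition~\ref{antipode} to $\bk=(\{2\}^a,3,\{2\}^b)$. The decisive observation is that $\zeta^{}_{\cS_1}(\{2\}^m)=\zeta^{\star}_{\cS_1}(\{2\}^m)=0$ for $m\ge1$, a special case of Theorem~\ref{rep F3}; hence in the alternating sum every term with $1\le i\le a$ vanishes through its left factor $\zeta^{}_{\cS_1}(\{2\}^i)$, and every term with $a+1\le i\le r-1$ vanishes through its right factor $\zeta^{\star}_{\cS_1}(\{2\}^{r-i})$. Only $i=0$ and $i=r$ remain, giving
\[
\zeta^{\star}_{\cS_1}(\{2\}^b,3,\{2\}^a)+(-1)^{a+b+1}\zeta^{}_{\cS_1}(\{2\}^a,3,\{2\}^b)=0.
\]
Interchanging $a$ and $b$, invoking the reversal formula $\zeta^{}_{\cS_1}(\{2\}^b,3,\{2\}^a)=-\zeta^{}_{\cS_1}(\{2\}^a,3,\{2\}^b)$ (valid since the weight is odd), and substituting \eqref{S1 for 2 3 2 non star} produces \eqref{S1 for 2 3 2 star}.

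The computations are largely bookkeeping once Zagier's formula, Proposition~\ref{antipode}, and Theorem~\ref{rep F3} are available. I expect the main obstacle to be the binomial identity in the non-star case: the cancellation of the $1/4^{a+b+1}$ terms and the final rewriting are where sign and index slips are most likely, and one must check that the admissibility of all segments genuinely justifies replacing $\zeta^{\sh}$ by convergent MZVs. The term-vanishing analysis for the antipode sum is routine but should be stated carefully enough to cover the boundary cases $a=0$ and $b=0$.
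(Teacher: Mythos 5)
Your proposal is correct and follows essentially the same route as the paper: reduce $\zeta^{}_{\cS_1}(\{2\}^a,3,\{2\}^b)$ to $\zeta(\{2\}^a,3,\{2\}^b)-\zeta(\{2\}^b,3,\{2\}^a)$, apply Zagier's formula \eqref{Zagier 1 mod zeta(2)} with the $4^{-(a+b+1)}$ terms cancelling, and deduce the star case from Proposition~\ref{antipode} together with the vanishing of $\zeta^{}_{\cS_1}(\{2\}^r)=\zeta^{\star}_{\cS_1}(\{2\}^r)$. The binomial identity you isolate is exactly the ``straightforward calculation'' the paper leaves implicit, and it checks out.
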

\begin{proof}
The case $\cF=\cA$ was proved by Hessami-Pilehrood--Hessami-Pilehrood--Tauraso; see \cite[Theorem~4.1]{HHT}.
Hereafter, we consider the case $\cF=\cS$. 
By the definition of the $\cS_1$-MZV and the fact that  $\zeta(\{2\}^r) \equiv 0 \pmod{\zeta(2)}$ for $r\geq1$, we have
\begin{align*}
\zeta^{}_{\cS_1}(\{2\}^a, 3, \{2\}^b)=\zeta(\{2\}^a, 3, \{2\}^b)-\zeta(\{2\}^b, 3, \{2\}^a).
\end{align*}
Thus we obtain \eqref{S1 for 2 3 2 non star} by the formula~\eqref{Zagier 1 mod zeta(2)} and straightforward calculation of binomial coefficients. 
The formula \eqref{S1 for 2 3 2 star} is obtained by \eqref{S1 for 2 3 2 non star}, Proposition \ref{antipode} and a special case of Theorem~\ref{rep F3}, that is, the fact that $\zeta^{}_{\cS_1}(\{2\}^r)=\zeta^{\star}_{\cS_1}(\{2\}^r)=0$ for $r\geq 1$.
\end{proof}
\begin{theorem}\label{S_1 for 2 1 2}
For non-negative integers $a$ and $b$, we have
\begin{align}
\zeta^{}_{\cF_1}(\{2\}^a, 1, \{2\}^b)&=4(-1)^{a+b}\frac{a-b}{2a+1}\left(1-\frac{1}{4^{a+b}}\right)\binom{2a+2b+1}{2b+1}\frZ_{\cF_1}(2a+2b+1), \label{eq: S1 212}\\
\zeta^\star_{\cF_1}(\{2\}^a, 1, \{2\}^b)&=\frac{4(b-a)}{2a+1}\left(1-\frac{1}{4^{a+b}}\right)\binom{2a+2b+1}{2b+1}\frZ_{\cF_1}(2a+2b+1). \label{eq: S1 212star}
\end{align}
\end{theorem}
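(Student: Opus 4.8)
The case $\cF=\cA$ is again due to Hessami-Pilehrood--Hessami-Pilehrood--Tauraso \cite{HHT}, so I would only treat $\cF=\cS$, following the template of Theorem~\ref{S1 for 2 3 2}. First I would unfold the definition of $\zeta^{}_{\cS_1}(\{2\}^a,1,\{2\}^b)$ as the degree-$0$ part in $t$. Since $\zeta^{\sh}(\{2\}^m)\equiv 0\pmod{\zeta(2)}$ for $m\ge 1$, every splitting that leaves a nonempty run of $2$'s on one side and carries the single entry $1$ to the other side dies, so only the two extreme splittings survive; as the weight $2a+2b+1$ is odd, this gives
\[
\zeta^{}_{\cS_1}(\{2\}^a,1,\{2\}^b)=\zeta^{\sh}(\{2\}^a,1,\{2\}^b)-\zeta^{\sh}(\{2\}^b,1,\{2\}^a),
\]
exactly parallel to the degree-$3$ case. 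The right-hand side is visibly antisymmetric in $a,b$, so I may assume $a\ge b$, the case $a=b$ being trivial since both sides vanish.

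The heart of the argument is the evaluation of $\zeta^{\sh}(\{2\}^a,1,\{2\}^b)$, and the key observation is that duality for MZVs carries the ``$1$''-hook to a ``$3$''-hook. For $b\ge1$ the index is admissible, and applying reverse-and-swap to the word $e_2^ae_1e_2^b=(e_1e_0)^ae_1(e_1e_0)^b$ produces $e_2^{b-1}e_3e_2^a$, whence $\zeta(\{2\}^a,1,\{2\}^b)=\zeta(\{2\}^{b-1},3,\{2\}^a)$. Now Zagier's formula \eqref{Zagier 1 mod zeta(2)}, with its parameters set to $(b-1,a)$, applies directly and gives
\[
\zeta^{\sh}(\{2\}^a,1,\{2\}^b)\equiv 2(-1)^{a+b}\Bigl\{\binom{2a+2b}{2b}-\bigl(1-\tfrac{1}{4^{a+b}}\bigr)\binom{2a+2b}{2a+1}\Bigr\}\zeta(2a+2b+1)\pmod{\zeta(2)}.
\]
The one genuinely fiddly point is the boundary $b=0$, where $(\{2\}^a,1)$ is not admissible: there I would invoke the regularization formula (Lemma~\ref{reg formula}) with $w=e_2^a$ and $m=1$, writing $\zeta^{\sh}(\{2\}^a,1)=-Z\bigl((e_2^{a-1}e_1\sh e_1)e_0\bigr)$. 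Every word in this shuffle, after appending $e_0$, is an admissible $1$-hook $\zeta(\{2\}^i,1,\{2\}^{a-i})$ with $a-i\ge1$, already covered by the case just settled, so one only has to sum these; a quick check ($a=1$ yields $\zeta^{\sh}(2,1)=-2\zeta(1,2)\equiv-2\zeta(3)$) confirms that the displayed formula persists at $b=0$.

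With both terms in hand, subtracting them makes the symmetric binomials cancel, since $\binom{2a+2b}{2b}=\binom{2a+2b}{2a}$, leaving
\[
\zeta^{}_{\cS_1}(\{2\}^a,1,\{2\}^b)=-2(-1)^{a+b}\bigl(1-\tfrac{1}{4^{a+b}}\bigr)\Bigl[\binom{2a+2b}{2a+1}-\binom{2a+2b}{2b+1}\Bigr]\zeta(2a+2b+1).
\]
I would then finish \eqref{eq: S1 212} with the elementary identity
\[
\binom{2a+2b}{2a+1}-\binom{2a+2b}{2b+1}=\frac{2(b-a)}{2a+1}\binom{2a+2b+1}{2b+1},
\]
obtained by extracting the common factor $\tfrac{(2a+2b)!}{(2a+1)!(2b+1)!}$ and using $(2b+1)(2b)-(2a+1)(2a)=2(b-a)(2a+2b+1)$. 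Finally, for the star formula \eqref{eq: S1 212star} I would apply Proposition~\ref{antipode} to the reversed index $(\{2\}^b,1,\{2\}^a)$; since $\zeta^{}_{\cS_1}(\{2\}^r)=\zeta^{\star}_{\cS_1}(\{2\}^r)=0$ for $r\ge1$ (a special case of Theorem~\ref{rep F3}), the antipode relation collapses to $\zeta^{\star}_{\cF_1}(\{2\}^a,1,\{2\}^b)=(-1)^{a+b}\zeta^{}_{\cF_1}(\{2\}^b,1,\{2\}^a)$, and substituting \eqref{eq: S1 212} with $a,b$ interchanged, together with the analogous identity $(2a+1)\binom{2a+2b+1}{2a+1}=(2b+1)\binom{2a+2b+1}{2b+1}$, yields \eqref{eq: S1 212star}. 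The only real obstacle is the non-admissible boundary $b=0$; everything else is the duality reduction to Zagier's theorem plus routine binomial bookkeeping.
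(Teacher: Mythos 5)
Your argument is correct and is essentially the paper's proof: unfold the definition to the two extreme splittings, dualize the $1$-hook to a $3$-hook, apply Zagier's formula \eqref{Zagier 1 mod zeta(2)}, patch the non-admissible boundary with Lemma~\ref{reg formula}, and deduce the star version from Proposition~\ref{antipode}. The only cosmetic differences are that you handle $a<b$ by antisymmetry where the paper uses the reversal formula, and that at $b=0$ you assert rather than carry out the summation of the Zagier evaluations over the shuffle terms (each admissible hook $\zeta(\{1\}^{j}\text{-type})$ there occurs with multiplicity $2$, exactly as in the paper's display \eqref{by duality}).
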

\begin{proof}
The case $\cF=\cA$ was proved by Hessami-Pilehrood--Hessami-Pilehrood--Tauraso; see \cite[Theorem~4.2]{HHT}.
Hereafter, we consider the case $\cF=\cS$. 
First, we prove \eqref{eq: S1 212} for the case $a, b \geq1$. By the definition of the $\cS_1$-MZV and the duality for MZVs, we obtain
\begin{align*}
\zeta^{}_{\cS_1}(\{2\}^a, 1, \{2\}^b)=\zeta(\{2\}^{b-1}, 3, \{2\}^a)-\zeta(\{2\}^{a-1}, 3, \{2\}^b).
\end{align*}
Then we obtain \eqref{eq: S1 212} by a similar calculation in Theorem~\ref{S1 for 2 3 2} using \eqref{Zagier 1 mod zeta(2)}. 

Next we prove \eqref{eq: S1 212} for the case $a\geq1$ and $b=0$. We have
\begin{equation}
\zeta^{}_{\cS_1}(\{2\}^a, 1)=\zeta^{\sh}(\{2\}^a, 1)-\zeta(1, \{2\}^a).
\end{equation}
Applying Lemma \ref{reg formula} for $w=(e_1e_0)^a$ and $m=1$, we obtain
\begin{align*}
\zeta^{\sh}(\{2\}^a, 1)=-2\sum_{j=0}^{a-1}\zeta(\{2\}^j, 1, \{2\}^{a-j}).
\end{align*}
Thus, by the duality for MZVs, we have
\begin{align}\label{by duality}
\zeta^{}_{\cS_1}(\{2\}^a, 1)&=-\zeta(1, \{2\}^a)-2\sum_{j=0}^{a-1}\zeta(\{2\}^j, 1, \{2\}^{a-j})\\
&=-\zeta(\{2\}^{a-1}, 3)-2\sum_{j=0}^{a-1}\zeta(\{2\}^{a-j-1}, 3, \{2\}^j). \nonumber
\end{align}
By \eqref{Zagier 1 mod zeta(2)}, we obtain
\begin{equation}\label{by Zagier}
\zeta(\{2\}^{a-j-1}, 3, \{2\}^j)= 2(-1)^a\left\{\binom{2a}{2j}-\left(1-\frac{1}{4^a}\right)\binom{2a}{2j+1}\right\}\zeta(2a+1)
\end{equation}
for $0 \leq j \leq a-1$. Therefore, from \eqref{by duality} and \eqref{by Zagier}, we obtain \eqref{eq: S1 212} for the case $a\geq1$ and $b=0$.
 The case $a=0$ and $b\geq1$ of \eqref{eq: S1 212} follows easily from the reversal formula and \eqref{by duality} with $a\geq1$.
 This completes the proof of \eqref{eq: S1 212}.
 The formula \eqref{eq: S1 212star} is obtained by \eqref{eq: S1 212}, Proposition~\ref{antipode} and the fact that $\zeta^{}_{\cS_1}(\{2\}^r)=\zeta^{\star}_{\cS_1}(\{2\}^r)=0$ for $r\geq 1$.
\end{proof}
\begin{remark}
Tasaka and Yamamoto proved an analogous formula of Theorem~\ref{Zagier 1} for $\zeta^{\star}(\{2\}^a, 1, \{2\}^b)$; see \cite[Theorem~1.6]{TY}.
We can also obtain Theorem~\ref{S_1 for 2 1 2} by a similar approach using \cite[Theorem~1.6]{TY} and Proposition \ref{antipode} instead of Zagier's formula (Theorem~\ref{Zagier 1}).
\end{remark}
The following theorem is a refinement of the even weight case in Theorem~\ref{S1 1 2 1}.
\begin{theorem}\label{S2 1 2 1}
Let $a$ and $b$ be non-negative integers. Assume that $a+b$ is even. Then we have
\begin{align}
\zeta^{}_{\cF_2}(\{1\}^a, 2, \{1\}^b)=\frac{1}{2}\left\{1+(-1)^a\binom{a+b+3}{b+2}\right\}\frZ_{\cF_2}(a+b+3)x_{\cF_2}, \label{S2 1 2 1 non star}\\
\zeta^\star_{\cF_2}(\{1\}^a, 2, \{1\}^b)=\frac{1}{2}\left\{1+(-1)^a\binom{a+b+3}{a+2}\right\}\frZ_{\cF_2}(a+b+3)x_{\cF_2}. \label{S2 1 2 1 star}
\end{align}
\end{theorem}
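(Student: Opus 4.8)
The plan is to prove both formulas for $\cF=\cS$ in detail and to obtain $\cF=\cA$ by the parallel computation in which the shuffle regularization and Zagier's formulas are replaced by their $\bmod\,p^2$ congruence analogues; so I describe the $\cS$ argument. Writing $\bk=(\{1\}^a,2,\{1\}^b)$ and $r=a+b+1$, I identify $\zeta^{}_{\cS_2}(\bk)$ with $\zeta^{}_{\cS_1}(\bk)+c\,t$, where $c\in\overline{\cZ}$ is the coefficient of $t^1$ in $\zeta^{\sh}_{\widehat{\cS}}(\bk)$ reduced modulo $\zeta(2)$. Since $a+b$ is even, Theorem~\ref{S1 1 2 1} gives $\zeta^{}_{\cS_1}(\bk)=(-1)^b\binom{a+b+2}{a+1}\frZ_{\cS_1}(a+b+2)=0$, because $\zeta(a+b+2)\equiv0\pmod{\zeta(2)}$ for the even argument $a+b+2$. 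Hence $\zeta^{}_{\cS_2}(\bk)=c\,x_{\cS_2}$ and everything reduces to computing $c$.

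Extracting the coefficient of $t^1$ from the definition of $\zeta^{\sh}_{\widehat{\cS}}$ forces exactly one summation variable $l_j$ to equal $1$, so that
\[
c=\sum_{i=0}^{r}(-1)^{k_{i+1}+\cdots+k_r}\zeta^{\sh}(k_1,\dots,k_i)\sum_{m=i+1}^{r}k_m\,\zeta^{\sh}(k_r,\dots,k_m+1,\dots,k_{i+1}).
\]
Because $\zeta^{\sh}(\{1\}^i)=0$ for $1\le i\le a$, only $i=0$ and $i\ge a+1$ contribute. For each $i\ge a+1$ the prefix is $\zeta^{\sh}(\{1\}^a,2,\{1\}^{i-a-1})$ and the inner sum runs over the trailing block of $1$'s; by the regularization computation in the proof of Theorem~\ref{S1 1 2 1} both factors collapse to single zeta values, so the term is a rational multiple of $\zeta(i+1)\zeta(r-i+1)$, a product whose arguments sum to the odd number $a+b+3$. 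One argument is therefore even and $\ge2$, so the product vanishes modulo $\zeta(2)$, and the whole range $i\ge a+1$ drops out. What survives is the reversal term $i=0$, namely
\[
c\equiv\sum_{m=1}^{r}k_m\,\zeta^{\sh}\!\left(\overline{\bk}^{(m)}\right)\pmod{\zeta(2)},
\]
where $\overline{\bk}^{(m)}$ is the reverse of $\bk$ with the entry coming from the $m$-th slot increased by $1$.

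The indices $\overline{\bk}^{(m)}$ have three shapes: $(\{1\}^b,2,\{1\}^{a-m},2,\{1\}^{m-1})$ for $1\le m\le a$; the single index $(\{1\}^b,3,\{1\}^a)$ carrying the coefficient $k_{a+1}=2$; and $(\{1\}^{a+b+1-m},2,\{1\}^{m-a-2},2,\{1\}^a)$ for $a+2\le m\le r$. For each one I strip the trailing block of $1$'s with the regularization formula (Lemma~\ref{reg formula}), which rewrites $\zeta^{\sh}(\overline{\bk}^{(m)})$ as a signed shuffle of the trailing $e_1$'s into the admissible head, followed by $e_0$. Every resulting convergent MZV has exactly two entries exceeding $1$, hence is dual to a double zeta value, which Theorem~\ref{Zagier 2} evaluates modulo $\zeta(2)$ as a rational multiple of $\zeta(a+b+3)$ (Theorem~\ref{Zagier 1} may be used in place of duality for the $(\{1\}^b,3,\{1\}^a)$ term). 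Assembling the contributions writes $c$ as an explicit alternating sum of binomial coefficients times $\frZ_{\cS_2}(a+b+3)$, and the main obstacle is the elementary but delicate identity collapsing this sum to $\tfrac12\{1+(-1)^a\binom{a+b+3}{b+2}\}$; this proves \eqref{S2 1 2 1 non star}.

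For the star value I use the antipode relation (Proposition~\ref{antipode}) applied to $\bk$. In $\cF_2$ every product of two nonempty sub-values lies in $x_{\cF_2}\overline{\cZ}$, so a middle term $1\le i\le r-1$ contributes to the $t^1$-part only through $\alpha_i\delta_i+\beta_i\gamma_i$, where $\alpha_i+\beta_i t$ and $\gamma_i+\delta_i t$ are the non-star prefix and star reversed-suffix values. A parity check using Theorems~\ref{S1 1 2 1}, \ref{thm:dep-1_general} and \ref{rep F3} shows that, because $a+b$ is even, each such term would require one sub-index to have simultaneously even and odd weight, so all middle terms vanish. Only $i=0$ and $i=r$ remain, giving $\zeta^{\star}_{\cF_2}(\overline{\bk})=(-1)^{r-1}\zeta^{}_{\cF_2}(\bk)$; since $r=a+b+1$ and $(-1)^a=(-1)^b$, replacing $\bk$ by $\overline{\bk}$ and inserting \eqref{S2 1 2 1 non star} yields \eqref{S2 1 2 1 star}.
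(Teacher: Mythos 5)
Your argument for the case $\cF=\cS$ is essentially the paper's own proof: you annihilate every term of the defining sum except the $i=0$ reversal term by the same two observations (vanishing of $\zeta^{\sh}(\{1\}^k)$ for the prefixes, and a weight-parity argument forcing one factor of each remaining product to be an even-weight single zeta value, hence $0$ in $\overline{\cZ}$), you evaluate the surviving term via Lemma~\ref{reg formula}, duality and \eqref{Zagier 2 mod zeta(2)}, and you deduce the star formula from Proposition~\ref{antipode} by exactly the parity analysis the paper uses. Two points are left open. First, the binomial identity you call ``the main obstacle'' is precisely equality~\eqref{key binom calc}, to which the paper devotes its entire Appendix (Proposition~\ref{prop:true value of C}, proved by splitting $C$ into six sums and repeatedly applying the partial-fraction identity \eqref{eq: PFD}); correctly naming its value is not the same as establishing it, and it is the only genuinely laborious step of the whole theorem. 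Second, your opening claim that $\cF=\cA$ follows by a ``parallel computation'' with ``$\bmod\,p^2$ congruence analogues'' is not substantiated and would not go through as described: $\zeta_{\cA_2}$ is not defined as a sum of products of regularized values, so there is no analogue of your $t^1$-coefficient extraction, and the paper instead handles this case by citing \cite[Theorem~4.5]{HHT} (with an alternative proof in \cite[Theorem~3.18]{SS}).
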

\begin{proof}
The case $\cF=\cA$ was proved by Hessami-Pilehrood--Hessami-Pilehrood--Tauraso \cite[Theorem~4.5]{HHT}; there is also another proof by Sakugawa and the third author \cite[Theorem~3.18]{SS}.
Hereafter, we consider the case $\cF=\cS$. 
We first prove the formula \eqref{S2 1 2 1 non star}. Set $d\coloneqq a+b+1$ and $\bk=(k_1, \ldots, k_d)\coloneqq (\{1\}^a, 2, \{1\}^b)$.
For $0\leq i \leq d$, we set
\begin{align*}
P_i(t)&\coloneqq (-1)^{k_{i+1}+\cdots+k_d}\zeta^{\sh}(k_1, \ldots, k_i)\\
&\quad\times\sum_{\substack{l_{i+1}, \ldots, l_d \geq0 \\ l_{i+1}+\cdots+l_d \leq1}}\left[\prod_{j=i+1}^d\binom{k_j+l_j-1}{l_j}\right]\zeta^{\sh}(k_d+l_d, \ldots, k_{i+1}+l_{i+1})t^{l_{i+1}+\cdots+l_d}.
\end{align*}
Note that this expression for the case $i=d$ means $P_d(t)=\zeta^{\sh}(\bk)$.
Then, by the definition of $\zeta^{}_{\cS_2}(\bk)$, we have
\[
\zeta^{}_{\cS_2}(\bk)=\sum_{i=0}^dP_i(t).
\]
We prove that for $1\leq i\leq a+b+1$, $P_i(t)=0$ in $\overline{\cZ}[t]$.
Since $\zeta^{\sh}(\{1\}^k)=0$ for a positive integer $k$, we obtain $P_1(t)=\cdots=P_a(t)=0$.
For $0 \le j \le b$, we calculate $P_{a+j+1}(t)$. By the definition of $P_i(t)$, we have
\[
P_{a+j+1}(t)=(-1)^{b-j}\zeta^{\sh}(\{1\}^a, 2, \{1\}^j)\left(\zeta^{\sh}(\{1\}^{b-j})+\sum_{i=1}^{b-j}\zeta^{\sh}(\{1\}^{i-1}, 2, \{1\}^{b-j-i})t\right).
\]
By \eqref{reg form for a b}, we have $\zeta^{\sh}(\{1\}^a, 2, \{1\}^j)=(-1)^j\binom{a+j+1}{j}\zeta(a+j+2)$. 
Since $\zeta(a+b+2)=0$ in $\overline{\cZ}$, we have $P_{a+b+1}(t)=0$ in $\overline{\cZ}[t]$.
Assume that $j<b$.
In this case, $\zeta^{\sh}(\{1\}^{b-j})=0$ holds.
By the shuffle-regularized sum formula \cite[Lemma~3.3]{Li} (or \cite[Theorem~1.2]{KS}), and the duality for MZVs, we have
\begin{align*}
\sum_{i=1}^{b-j}\zeta^{\sh}(\{1\}^{i-1}, 2, \{1\}^{b-j-i})=(-1)^{b-j-1}\zeta(b-j+1).
\end{align*}
Since $a+b$ is even, $a+j+2 \not \equiv b-j+1 \pmod{2}$ and thus we have $P_{a+j+1}(t)=0$ in $\overline{\cZ}$. 

The calculation of $P_0(t)$ remains. State $P_0(t)=A+Bt$ with $A, B\in \overline{\cZ}$. Then from \eqref{reg form for a b}, we have $A=\zeta^{\sh}(\{1\}^b, 2, \{1\}^a)=0$ in $\overline{\cZ}$.
By definition, $B$ is expressed as follows:
\begin{align*}
B
&=\sum_{\substack{l+m=b-1\\ l, m\geq0}}\zeta^{\sh}(\{1\}^l, 2, \{1\}^m, 2, \{1\}^a)\\
&\quad +2\zeta^{\sh}(\{1\}^b, 3, \{1\}^a)+\sum_{\substack{m+n=a-1\\ m, n\geq0}}\zeta^{\sh}(\{1\}^b, 2, \{1\}^m, 2, \{1\}^n).
\end{align*}
For general non-negative integers $l,m$ and $n$, by the regularization formula (Lemma~\ref{reg formula}), we have
\begin{align*}
\zeta^{\sh}(\{1\}^l, 2, \{1\}^m, 2, \{1\}^n)
&=(-1)^n\sum_{\substack{r+s=n \\ r, s \geq0}}\binom{r+l+1}{r}\binom{s+m+1}{s}\zeta(\{1\}^{r+l}, 2, \{1\}^{s+m}, 2),\\
\zeta^{\sh}(\{1\}^l, 3, \{1\}^n)&=(-1)^n\sum_{\substack{r+s=n \\ r, s \geq0}}\binom{r+l+1}{r}\zeta(\{1\}^{r+l}, 2, \{1\}^{s-1}, 2),
\end{align*}
where $\zeta(\{1\}^{r+l}, 2, \{1\}^{-1}, 2)$ means $\zeta(\{1\}^{r+l}, 3)$.
Thus, with the duality for MZVs, we obtain
\begin{align*}
B=&\sum_{\substack{l+m=b-1 \\ l, m\geq0}}(-1)^a\sum_{\substack{r+s=a\\r, s\geq0}}\binom{r+l+1}{r}\binom{s+m+1}{s}\zeta(s+m+2, r+l+2)\\
&+2(-1)^a\sum_{\substack{r+s=a \\ r, s\geq0}}\binom{r+b+1}{r}\zeta(s+1, r+b+2)\\
&+\sum_{\substack{m+n=a-1\\m,n\geq0}}(-1)^n\sum_{\substack{r+s=n \\ r, s \geq0}}\binom{r+b+1}{r}\binom{s+m+1}{s}\zeta(s+m+2, r+b+2).
\end{align*}
Since $a+b+3$ is odd, by using \eqref{Zagier 2 mod zeta(2)}, we can rewrite $B$ as a rational multiple of the Riemann zeta value $\zeta(a+b+3)$.
Specifically, we have $B=\frac{1}{2}C\zeta(a+b+3)$ with
\begin{align*}
C=&\sum_{\substack{l+m=b-1 \\ l, m\geq0}}(-1)^a\sum_{\substack{r+s=a\\r, s\geq0}}\binom{r+l+1}{r}\binom{s+m+1}{s}(-1)^{s+m+1}\left\{\binom{a+b+3}{s+m+2}+(-1)^{s+m}\right\}\\
&+2(-1)^a\sum_{\substack{r+s=a \\ r, s\geq0}}\binom{r+b+1}{r}(-1)^{s}\left\{\binom{a+b+3}{s+1}+(-1)^{s+1}\right\}\\
&+\sum_{\substack{m+n=a-1\\m,n\geq0}}(-1)^n\sum_{\substack{r+s=n \\ r, s \geq0}}\binom{r+b+1}{r}\binom{s+m+1}{s}(-1)^{s+m+1}\left\{\binom{a+b+3}{s+m+2}+(-1)^{s+m}\right\}.
\end{align*}
Therefore, it suffices to prove the following:
\begin{equation}\label{key binom calc}
  C=1+(-1)^a\binom{a+b+3}{b+2}.
\end{equation}
We prove this in the Appendix. From this, we obtain the desired formula for $\zeta^{}_{\cS_2}(\{1\}^a, 2, \{1\}^b)$. 

Next, we prove \eqref{S2 1 2 1 star}.
By Proposition~\ref{antipode}, we have
\begin{align*}
\zeta^{\star}_{\cS_2}(\{1\}^a,2,\{1\}^b)-\zeta^{}_{\cS_2}(\{1\}^b,2,\{1\}^a)&=\sum_{j=1}^a(-1)^j\zeta^{}_{\cS_2}(\{1\}^b, 2, \{1\}^{a-j})\zeta^\star_{\cS_2}(\{1\}^j)\\
&\quad+\sum_{i=1}^b(-1)^{b-i}\zeta^{}_{\cS_2}(\{1\}^{b+1-i})\zeta^\star_{\cS_2}(\{1\}^a, 2, \{1\}^{i-1}).
\end{align*}
It is sufficient to show that the right-hand side vanishes.
If $j$ is odd, then we have $\zeta^\star_{\cS_2}(\{1\}^j)=0$ by \eqref{rep.k star}.
If $j$ is even, then both $\zeta^\star_{\cS_2}(\{1\}^j)$ and $\zeta^{}_{\cS_2}(\{1\}^b, 2, \{1\}^{a-j})$ can be seen as elements of $t\overline{\cZ}[t]$ by \eqref{rep.k star} and \eqref{eq: S1 121}. 
Thus the first summation vanishes in $\overline{\cZ}\jump{t}/(t^2)$. 
Similarly, if $b-i$ is even, then we have $\zeta^{}_{\cS_2}(\{1\}^{b+1-i})=0$ by \eqref{rep.k non star}.
If $b-i$ is odd, then both $\zeta^{}_{\cS_2}(\{1\}^{b+1-i})$ and $\zeta^\star_{\cS_2}(\{1\}^a, 2, \{1\}^{i-1})$ can be seen as elements of $t\overline{\cZ}[t]$ by \eqref{rep.k non star} and \eqref{eq: S1 121star}. Therefore, the second summation also vanishes.
\end{proof}
\begin{remark}
The proof of the case $\cF=\cA$ of Theorem~\ref{S2 1 2 1} by Sakugawa and the third author is based on the `$\cA_2$-duality' \cite[Remark~3.14 (40)]{SS}.
If the `$\cS_2$-duality' is established, then we can obtain another proof of the case $\cF=\cS$ of Theorem~\ref{S2 1 2 1}.
When we were writing this paper, a preprint \cite{TT} by Takeyama and Tasaka appeared on arXiv.
Their \cite[Corollary~6.8]{TT} contains the $\cS_2$-duality as a special case.
\end{remark}
\section{Bowman--Bradley type theorem}\label{sec:BB}
Murahara, Onozuka and the third author \cite{MOS} proved the Bowman--Bradley type theorem for $\cA_2$-MZ(S)Vs (= the case  $\cF=\cA$ of Theorem~\ref{BB thm S_2}).
In this section, we prove the $\cS_2$-counterpart of their theorem.
By combining these two theorems, we have the following.
\begin{theorem}[{Bowman--Bradley type theorem for $\cF_2$-MZ(S)V}]\label{BB thm S_2}
For non-negative integers $l$ and $m$ with $(l, m)\neq(0, 0)$, we have
\begin{equation}\label{BB SMZV}
\begin{split}
&\sum_{\substack{m_0+\cdots+m_{2l}=m \\ m_0, \ldots, m_{2l} \geq0}}\zeta^{}_{\cF_2}\bigl(\{2\}^{m_0}, 1, \{2\}^{m_1}, 3, \{2\}^{m_2}, \dots, \{2\}^{m_{2l-2}}, 1, \{2\}^{m_{2l-1}}, 3, \{2\}^{m_{2l}}\bigr) \\
&=(-1)^m\left\{(-1)^l2^{1-2l}\binom{l+m}{l}-4\binom{2l+m}{2l}\right\}\frZ_{\cF_2}(4l+2m+1)x_{\cF_2},
\end{split}
\end{equation}
\begin{equation}\label{BB SMZSV}
\begin{split}
&\sum_{\substack{m_0+\cdots+m_{2l}=m \\ m_0, \ldots, m_{2l} \geq0}}\zeta^\star_{\cF_2}\bigl(\{2\}^{m_0}, 1, \{2\}^{m_1}, 3, \{2\}^{m_2}, \dots, \{2\}^{m_{2l-2}}, 1, \{2\}^{m_{2l-1}}, 3, \{2\}^{m_{2l}}\bigr) \\
&=(-1)^l2^{1-2l}\binom{l+m}{l}\frZ_{\cF_2}(4l+2m+1)x_{\cF_2}.
\end{split}
\end{equation}
\end{theorem}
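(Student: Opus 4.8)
Since the case $\cF=\cA$ is exactly the theorem of \cite{MOS}, the plan is to establish the remaining case $\cF=\cS$, i.e.\ the statement for $\cS_2$-MZ(S)Vs. Writing $d=\dep(\bk)$, I would begin from $\zeta^{}_{\cS_2}(\bk)=\tau_2\bigl(\zeta^{\sh}_{\widehat{\cS}}(\bk)\bigr)$ and expand the defining series of the $\widehat{\cS}$-MZV to first order in $t$: modulo $t^2$ one obtains $\zeta^{}_{\cS_2}(\bk)=\zeta^{}_{\cS_1}(\bk)+c_1(\bk)\,t$, where the $t^0$-part is the symmetric MZV and $c_1(\bk)=\sum_{i=0}^{d}(-1)^{\wt(k_{i+1},\dots,k_d)}\zeta^{\sh}(k_1,\dots,k_i)\sum_{j=i+1}^{d}k_j\,\zeta^{\sh}(k_d,\dots,k_j+1,\dots,k_{i+1})$. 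The decisive simplification is that we compute in $\overline{\cZ}=\cZ/\zeta(2)\cZ$, where $\zeta(\{2\}^a)=0$ for every $a\ge1$; after substituting the Bowman--Bradley index $\bk=(\{2\}^{m_0},1,\{2\}^{m_1},3,\dots,1,\{2\}^{m_{2l-1}},3,\{2\}^{m_{2l}})$ this annihilates the overwhelming majority of the factors appearing in $\zeta^{}_{\cS_1}(\bk)$ and $c_1(\bk)$.

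Next I would carry out the evaluation term by term. Using Lemma~\ref{reg formula} to remove the trailing $1$'s produced by the shuffle regularization, together with the duality for MZVs, each surviving contribution collapses to a single multiple zeta value of the form $\zeta(\{2\}^{a},3,\{2\}^{b})$ or $\zeta(\{2\}^{a},1,\{2\}^{b})$. Zagier's formulas \eqref{Zagier 1 mod zeta(2)} and \eqref{Zagier 2 mod zeta(2)} then rewrite each of these, modulo $\zeta(2)$, as an explicit rational multiple of the odd Riemann zeta value $\zeta(4l+2m+1)=\frZ_{\cS_2}(4l+2m+1)$ for the linear part, while the $t^0$-part becomes a combination of products of odd zeta values of the even total weight $4l+2m$. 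Consistency with the right-hand side of \eqref{BB SMZV}, which lies in $t\,\overline{\cZ}[t]$, forces this $t^0$-part to vanish, and I would verify that it indeed cancels along with the rest of the computation.

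The heart of the proof is then the evaluation of the linear coefficient after summing over all compositions $m_0+\cdots+m_{2l}=m$. This reduces the whole statement to a single binomial-coefficient identity, namely that the accumulated Zagier weights sum to $(-1)^m\bigl\{(-1)^l2^{1-2l}\binom{l+m}{l}-4\binom{2l+m}{2l}\bigr\}$. I expect this combinatorial sum to be the main obstacle. The plan is to handle it with a two-variable generating function, summing over $l$ and $m$ simultaneously and recognizing the generating series as a coefficient extraction in a product of elementary trigonometric/hypergeometric series, exactly in parallel with the $\cA_2$-computation of \cite{MOS}; the reduction modulo $\zeta(2)$ is what makes the two computations formally identical.

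Finally, the star formula \eqref{BB SMZSV} I would deduce from the non-star formula \eqref{BB SMZV} rather than prove afresh. Expanding $\zeta^{\star}_{\cS_2}=\sum_{\square}\zeta^{}_{\cS_2}$ and invoking Proposition~\ref{antipode}, the star Bowman--Bradley sum is expressed through non-star sums of reversed and merged indices; modulo $t^2$ and $\zeta(2)$ the lower-weight products drop out exactly as in the proofs of Theorems~\ref{S1 1 2 1}--\ref{S2 1 2 1}, and the clean coefficient $(-1)^l2^{1-2l}\binom{l+m}{l}$ emerges directly from \eqref{BB SMZV}.
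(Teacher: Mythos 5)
Your plan for the $\cF=\cS$ case has a genuine gap at its core step. You propose to expand $\zeta^{}_{\cS_2}(\bk)=\zeta^{}_{\cS_1}(\bk)+c_1(\bk)t$ from the definition and then evaluate "term by term," claiming that after using Lemma~\ref{reg formula} and duality "each surviving contribution collapses to a single multiple zeta value of the form $\zeta(\{2\}^{a},3,\{2\}^{b})$ or $\zeta(\{2\}^{a},1,\{2\}^{b})$." This is false for $l\geq2$: the cut at position $i$ in the defining sum produces prefixes and reversed suffixes of the Bowman--Bradley index that contain \emph{several} interspersed $1$'s and $3$'s, e.g.\ $\zeta^{\sh}(\{2\}^{m_0},1,\{2\}^{m_1},3,\{2\}^{m_2},1,\{2\}^{m_3})$, and no formula of Zagier type (nor duality) evaluates such values individually. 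Moreover the interior cuts contribute \emph{products} of two regularized MZVs of intermediate weights; in the $t^1$-coefficient these are generically of the shape $\zeta(\mathrm{odd})\cdot\zeta(\mathrm{even\ weight\ MZV})$ and do not individually reduce to $\zeta(4l+2m+1)$, so the entire content of the theorem is hidden in a massive cancellation that your outline does not address. (Separately, the vanishing of the $t^0$-part is exactly the $\cF_1$ Bowman--Bradley theorem of Saito--Wakabayashi \cite{SW2}; it should be cited, not "verified along with the rest of the computation.")

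The paper avoids all of this by never expanding the definition. It identifies the left-hand side with $Z_{\cS_2}\bigl((e_1e_3)^l\,\widetilde{\sh}\;e^m_2\bigr)$ and runs an induction on $l$ through the purely algebraic identity of \cite[Lemma 2.1]{MOS}, which expresses $(e_1e_3)^l\,\widetilde{\sh}\;e^m_2$ in terms of $e^{l+m}_2\sh e^l_2$ and the words $(e_1e_3)^k\,\widetilde{\sh}\;e^{2l+m-2k}_2$ with $k<l$. The only analytic inputs are then the evaluation of $Z_{\cS_2}(e^{l+m}_2\sh e^l_2)$ (Lemma~\ref{2 sh 2}, via the $\cS_2$-shuffle relation, $\zeta^{}_{\cS_2}(\{2\}^r)$ and \eqref{S1 for 2 3 2 non star}) and the binomial identity \cite[Lemma 2.6]{MOS}; no individual BB-type MZV of higher depth ever needs to be evaluated. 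Your treatment of the star case via Proposition~\ref{antipode} is plausible in spirit, but the paper instead uses Yamamoto's identity together with the harmonic relation (Lemma~\ref{Yamamoto}); either way, the star case is not where the difficulty lies. To repair your argument you would need to supply the inductive/algebraic device that organizes the cancellation — at which point you would essentially be reproducing the proof of \cite{MOS}.
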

This gives a partial lift of the Bowman--Bradley type theorem for $\cF_1$-MZ(S)V proved by Saito and Wakabayashi \cite{SW2}.
Note that the proof of the $\cF=\cS$ case in Theorem~\ref{BB thm S_2} presented here is essentially the same as the proof of the $\cF=\cA$ case by Murahara, Onozuka and the third author \cite{MOS}.
In contrast, proofs of some sum formulas which will be given in the next section are different from those in the previous study.

We prepare two lemmas for the proof.
\begin{lemma}\label{2 sh 2}
For non-negative integers $l$ and $m$ with $(l, m)\neq (0, 0)$, we have
\begin{align*}
Z_{\cS_2}\bigl(e^{l+m}_2\sh e^l_2\bigr)=(-1)^m2\left\{1-2\binom{4l+2m}{2l}\right\}\zeta(4l+2m+1)t.
\end{align*}
\end{lemma}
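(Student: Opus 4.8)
The plan is to apply the shuffle relation for $\cF_n$-MZVs (Theorem~\ref{DSRn}) to the left-hand side, reduce everything modulo $t^2$, and then feed in the explicit depth-one and depth-three evaluations already established.

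First I would take $\bk=(\{2\}^{l+m})$, $\bl=(\{2\}^l)$ (so $\dep(\bl)=l$ and every entry of $\bl$ equals $2$), $\cF=\cS$ and $n=2$ in the shuffle relation~\eqref{eq:shuffle_relation}. Since $\wt(\bl')\le n-1=1$, only the vectors $\bl'$ with $\wt(\bl')\in\{0,1\}$ contribute, and $(-1)^{\wt(\bl)}=(-1)^{2l}=1$. The term $\wt(\bl')=0$ produces $\zeta^{}_{\cS_2}(\{2\}^{2l+m})$, because $\overline{\bl}=(\{2\}^l)$ and $e_\bk e_{\overline{\bl}}=e_{(\{2\}^{2l+m})}$. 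For $\wt(\bl')=1$ the single nonzero entry sits in some position $j\in\{1,\dots,l\}$, contributing the factor $\binom{2}{1}=2$ and turning $\bl+\bl'$ into $(\{2\}^{j-1},3,\{2\}^{l-j})$, whose reversal appended to $e_\bk$ gives $e_{(\{2\}^{2l+m-j},3,\{2\}^{j-1})}$. Collecting these contributions yields
\begin{equation*}
Z_{\cS_2}\bigl(e^{l+m}_2\sh e^l_2\bigr)=\zeta^{}_{\cS_2}(\{2\}^{2l+m})+2t\sum_{j=1}^l\zeta^{}_{\cS_2}(\{2\}^{2l+m-j},3,\{2\}^{j-1}).
\end{equation*}

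Next, because we work in $\overline{\cZ}\jump{t}/(t^2)$ and each depth-three value is multiplied by $t$, only its constant term survives; that constant term is the corresponding $\cS_1$-value, which I evaluate by~\eqref{S1 for 2 3 2 non star}. The first summand is evaluated by~\eqref{rep.k non star} with $k=2$, $r=2l+m$, giving $\zeta^{}_{\cS_2}(\{2\}^{2l+m})=2(-1)^{m+1}\zeta(4l+2m+1)t$. Writing $M\coloneqq 2l+m$ and substituting $a=2l+m-j$, $b=j-1$ into~\eqref{S1 for 2 3 2 non star}, the whole expression collapses to $2(-1)^{m+1}\zeta(4l+2m+1)t\,(1+2S)$ with
\begin{equation*}
S=\sum_{j=1}^l\frac{M-2j+1}{M-j+1}\binom{2M+1}{2j}.
\end{equation*}

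The only real work left, and the main obstacle, is the binomial evaluation $S=\binom{2M}{2l}-1$; after that a short sign bookkeeping matches the claim. I would prove it by telescoping via the summand identity
\begin{equation*}
\binom{2M}{2j}-\binom{2M}{2j-2}=\frac{M-2j+1}{M-j+1}\binom{2M+1}{2j},
\end{equation*}
which, after writing both sides as rational multiples of $\binom{2M}{2j}$ and clearing denominators, reduces to the polynomial identity $2M^2-4Mj+3M-2j+1=(M-2j+1)(2M+1)$. Hence $S=\sum_{j=1}^l\bigl(\binom{2M}{2j}-\binom{2M}{2j-2}\bigr)=\binom{2M}{2l}-1$, so $1+2S=2\binom{4l+2m}{2l}-1$. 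Substituting back and using $(-1)^{m+1}=-(-1)^m$ gives $(-1)^m 2\{1-2\binom{4l+2m}{2l}\}\zeta(4l+2m+1)t$, exactly the asserted value. The degenerate case $l=0$, $m\ge1$ is handled directly: the shuffle sum is empty and the formula reduces to~\eqref{rep.k non star}.
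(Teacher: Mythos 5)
Your proposal is correct and follows essentially the same route as the paper: the paper's proof of this lemma is exactly the combination of the shuffle relation~\eqref{eq:shuffle_relation} with $\cF_n=\cS_2$, the evaluation~\eqref{rep.k non star} with $k=2$, and the formula~\eqref{S1 for 2 3 2 non star}, reduced to a binomial sum (the telescoping identity you use checks out and handles that final sum cleanly). The edge case $l=0$ and the sign bookkeeping are also handled correctly.
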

\begin{proof}
This lemma is the $\cS_2$-counterpart of \cite[Lemma 2.5]{MOS} and is proved from the same argument in \cite{MOS} by using the explicit evaluation of $\zeta^{}_{\cS_2}(\{2\}^r)$ (\eqref{rep.k non star} with $k=2$), \eqref{S1 for 2 3 2 non star} and \eqref{eq:shuffle_relation} with $\cF_n=\cS_2$.
\end{proof}

For a positive integer $n$, define a $\bQ$-linear map $Z^{\star}_{\cS_n} \colon \frH^1 \to \overline{\cZ}\jump{t}/(t^n)$ by $Z^{\star}_{\cS_n}(e_{\bk}) \coloneqq \zeta^{\star}_{\cS_n}(\bk)$ for any index $\bk$.
\begin{lemma}\label{Yamamoto}
For non-negative integers $l$ and $m$, we have
\begin{align*}
&Z^{\star}_{\cS_2}\bigl((e_1e_3)^l \; \widetilde{\sh}\; e^m_2\bigr)\\
&=\sum_{\substack{2i+k+u=2l \\ j+n+v=m}}(-1)^{j+k}\binom{k+n}{k}\binom{u+v}{u}Z_{\cS_2}\bigl((e_1e_3)^i \widetilde{\sh} \; e^j_2\bigr) Z^{\star}_{\cS_2}\bigl(e^{k+n}_2\bigr) Z^{\star}_{\cS_2}\bigl(e^{u+v}_2\bigr).
\end{align*}
\end{lemma}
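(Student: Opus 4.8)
The plan is to read the left-hand side as a generating element and then to exploit the very special shape of $\cS_2$-values of $e_2$-strings. First I would record the two structural facts that feed the right-hand side: Muneta's shuffle of two $e_2$-strings is $e_2^{a}\widetilde{\sh}e_2^{b}=\binom{a+b}{a}e_2^{a+b}$ (this is the source of the binomials $\binom{k+n}{k}$ and $\binom{u+v}{u}$), and
\[
(e_1e_3)^l\widetilde{\sh}e_2^m=\sum_{m_0+\cdots+m_{2l}=m}e_2^{m_0}e_1e_2^{m_1}e_3e_2^{m_2}\cdots e_1e_2^{m_{2l-1}}e_3e_2^{m_{2l}}
\]
is exactly the generating element of the family appearing in \eqref{BB SMZSV}, so that $Z^{\star}_{\cS_2}$ of it is the left-hand side of that formula. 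Thus the asserted identity is a \emph{boundary decomposition}: it peels the leftmost and rightmost runs of $2$'s off the star value, recording them as the factors $Z^{\star}_{\cS_2}(e_2^{k+n})$ and $Z^{\star}_{\cS_2}(e_2^{u+v})$ and leaving the core $(e_1e_3)^i\widetilde{\sh}e_2^j$. I would model the whole argument on the $\cA_2$-computation of Murahara--Onozuka--Seki \cite{MOS}, transporting it to $\cS_2$.

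The decisive simplification is specific to $\cS_2$. By \eqref{rep.k star} with $k=2$, together with $\zeta^{\star}_{\cS_1}(\{2\}^s)=0$, we have $Z^{\star}_{\cS_2}(e_2^s)\in t\,\overline{\cZ}[t]$ for every $s\ge1$, while $Z^{\star}_{\cS_2}(e_2^0)=1$. Hence, working modulo $t^2$, a term on the right survives only when at least one of $k+n$ and $u+v$ vanishes, since a product of two $t$-divisible factors is $0$. Inclusion--exclusion over the events $\{k+n=0\}$ and $\{u+v=0\}$ then collapses the double sum: on the locus $\{u=v=0\}$ the integer $k=2(l-i)$ is even, so the two single-boundary sums carry the same sign $(-1)^{j}$ and the same binomial and therefore coincide, while the overlap $\{k=n=u=v=0\}$ contributes $(-1)^m Z_{\cS_2}((e_1e_3)^l\widetilde{\sh}e_2^m)$. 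The right-hand side thus reduces modulo $t^2$ to
\[
2A-(-1)^m Z_{\cS_2}\bigl((e_1e_3)^l\widetilde{\sh}e_2^m\bigr),\qquad A\coloneqq\sum_{i,j}(-1)^j\binom{2(l-i)+m-j}{2(l-i)}Z_{\cS_2}\bigl((e_1e_3)^i\widetilde{\sh}e_2^j\bigr)Z^{\star}_{\cS_2}\bigl(e_2^{2(l-i)+m-j}\bigr),
\]
so the lemma becomes equivalent to the single relation $Z^{\star}_{\cS_2}(W)+(-1)^m Z_{\cS_2}(W)=2A$ with $W\coloneqq(e_1e_3)^l\widetilde{\sh}e_2^m$.

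To prove this reduced relation I would combine the antipode relation (Proposition~\ref{antipode}) with the reversal formula (the $\bk=\varnothing$ case of \eqref{eq:shuffle_relation}) for $\cS_2$. The reversal formula expresses $Z_{\cS_2}(W)$ through $Z_{\cS_2}$ of the reversed word plus $t$-corrections, and Proposition~\ref{antipode} connects the star value $Z^\star_{\cS_2}(W)$ with non-star values of complementary pieces; combining them should rewrite $Z^{\star}_{\cS_2}(W)$ as $(-1)^{\wt(W)}Z_{\cS_2}(W)$ plus precisely the boundary-run corrections bundled into $A$, the binomials reappearing as the Muneta-shuffle merge coefficients from peeling a boundary run. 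Rather than expand everything at once, I would run an induction that peels one letter at a time via the defining recursion $(e_1e_3)^l\widetilde{\sh}e_2^m=e_1\bigl(e_3(e_1e_3)^{l-1}\widetilde{\sh}e_2^m\bigr)+e_2\bigl((e_1e_3)^l\widetilde{\sh}e_2^{m-1}\bigr)$ of $\widetilde{\sh}$ and the $\cS_2$-shuffle relation, with \eqref{rep.k star} and \eqref{S1 for 2 3 2 non star} supplying the base data. I expect the main obstacle to be exactly this bookkeeping: keeping the threefold split of the $2l$ weight-two units (core/left/right) synchronized with the threefold split of the $m$ twos while tracking the signs $(-1)^{j+k}$ and the merge binomials through the reversal. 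A cleaner alternative, which I would pursue in parallel, is to establish the three-factor identity already at the level of regularized real MZSVs and then to lift it to $\cS_2$ by the correspondence used throughout the paper, thereby bypassing the explicit collapse.
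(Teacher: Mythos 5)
Your reduction of the right-hand side modulo $t^2$ is carried out correctly: since $Z^{\star}_{\cS_2}(e_2^s)\in t\overline{\cZ}[t]$ for $s\geq1$ by \eqref{rep.k star}, only terms with $k+n=0$ or $u+v=0$ survive; the two boundary sums agree because $k=2(l-i)$ is even; and the overlap contributes $(-1)^mZ_{\cS_2}\bigl((e_1e_3)^l\widetilde{\sh}e_2^m\bigr)$. But this is where the real work begins, not where it ends. The residual identity $Z^{\star}_{\cS_2}(W)+(-1)^mZ_{\cS_2}(W)=2A$ is essentially as deep as the lemma itself, and you do not prove it: the paragraph meant to do so consists of statements of intent (``should rewrite'', ``I would run an induction'', ``I expect the main obstacle to be exactly this bookkeeping''). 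The deferred bookkeeping --- showing that Proposition~\ref{antipode} applied to each index $(\{2\}^{m_0},1,\{2\}^{m_1},3,\dots)$, summed over the family and combined with the reversal formula, produces exactly the coefficients $(-1)^j\binom{2(l-i)+m-j}{2(l-i)}$ of $A$ --- is the entire content of the statement. Note in particular that the intermediate antipode terms $\zeta_{\cS_2}(k_1,\dots,k_i)\,\zeta^{\star}_{\cS_2}(k_r,\dots,k_{i+1})$ with the cut strictly inside the word do not individually vanish (odd-weight pieces generally have nonzero constant terms), so a genuine cancellation across the family must be verified. As it stands, the proposal has a gap precisely at the step that constitutes the lemma.

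The gap is all the more avoidable because the identity is not an $\cS_2$-phenomenon at all. The paper's proof is two lines: equation (3.1) of \cite{Y} is a universal identity in the harmonic algebra, decomposing (the star-version of) $(e_1e_3)^l\,\widetilde{\sh}\,e_2^m$ into harmonic products of a core $(e_1e_3)^i\,\widetilde{\sh}\,e_2^j$ with two boundary $e_2$-strings, with exactly the signs and binomial coefficients of the lemma; one then applies the harmonic relation \eqref{eq:harmonic_relation}, i.e., the fact that $Z_{\cS_2}$ sends $*$ to multiplication. No $t$-divisibility, truncation, or $\cS_2$-specific input is needed, and the same statement holds verbatim for $\cA_2$ or for real regularized MZSVs. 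Your closing ``cleaner alternative'' --- establish the three-factor identity at the level of regularized MZSVs and transport it --- is precisely this argument and should have been the main route: by collapsing modulo $t^2$ first, you discard the universal structure that makes the proof immediate and replace it with a special-case identity that you would then have to re-derive by hand.
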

\begin{proof}
This follows immediately from \cite[equation (3.1)]{Y} and \eqref{eq:harmonic_relation} with $\cF_n=\cS_2$.
\end{proof}
\begin{proof}[Proof of the $\cF=\cS$ case in Theorem \ref{BB thm S_2}]
We prove \eqref{BB SMZV} by induction on $l \geq0$.
The case $l=0$ holds by the explicit evaluation of $\zeta^{}_{\cS_2}(\{2\}^r)$ (\eqref{rep.k non star} with $k=2$). Let $l$ be a positive integer and $m$ a non-negative integer. By \cite[Lemma 2.1]{MOS}, we have
\begin{align*}
&Z_{\cS_2}\bigl((e_1e_3)^l \; \widetilde{\sh} \; e^m_2\bigr)\\
&=4^{-l}Z_{\cS_2}\bigl(e^{l+m}_2 \sh e^l_2\bigr)-\sum_{k=0}^{l-1}4^{k-l}\binom{2l+m-2k}{l-k}Z_{\cS_2}\bigl((e_1e_3)^k\; \widetilde{\sh} \; e^{2l+m-2k}_2\bigr).
\end{align*}
Hence, by Lemma \ref{2 sh 2} and the induction hypothesis, we have
\begin{align*}
&Z_{\cS_2}\bigl((e_1e_3)^l \; \widetilde{\sh} \; e^m_2\bigr)\\
&=(-1)^m2^{1-2l}\left\{1-2\binom{4l+2m}{2l}\right\}\zeta(4l+2m+1)t\\
&\quad-\sum_{k=0}^{l-1}4^{k-l}\binom{2l+m-2k}{l-k}\\
&\qquad \cdot (-1)^m\left\{(-1)^k2^{1-2k}\binom{2l+m-k}{k}-4\binom{2l+m}{2k}\right\}\zeta(4l+2m+1)t\bmod{\zeta(2)}.
\end{align*}
We see that this coincides with the desired formula by using \cite[Lemma 2.6]{MOS}. We also obtain \eqref{BB SMZSV} by the same argument in \cite{MOS} using \eqref{BB SMZV}, \eqref{rep.k star} with $k=2$ and Lemma~\ref{Yamamoto}.
\end{proof}
%
\section{Sum formulas}\label{sec:sum_formulas}
In this section, we prove the \emph{$\cF_n$-symmetric sum formula} (= Theorem~\ref{sym sum}), the \emph{$\cF_n$-sum formula over $I_{k,r}$} for $n=2,3$ (= Theorem~\ref{sum formula F3}), and the \emph{$\cF_2$-sum formula over $I_{k,r,i}$} (= Theorem~\ref{i adm sum formula}).
\subsection{$\cF_n$-symmetric sum formula}
We first state the $\cF_n$-symmetric sum formula.
\begin{theorem}[{$\cF_n$-symmetric sum formula}]\label{sym sum}
Let $n$ and $r$ be positive integers and $\bk=(k_1,\dots,k_r)$ an index.
Then, we have
\begin{align}
\sum_{\sigma \in \frS_r}\zeta^{}_{\cF_n}(\sigma(\bk))&=\sum_{\cB=\{B_1, \ldots, B_l\}}(-1)^{r-l}c(\cB)\zeta^{}_{\cF_n}(b_1(\bk))\cdots\zeta^{}_{\cF_n}(b_l(\bk)),\label{sym sum non star}\\
\sum_{\sigma \in \frS_r}\zeta^\star_{\cF_n}(\sigma(\bk))&=\sum_{\cB =\{B_1, \ldots, B_l\}}c(\cB)\zeta^{}_{\cF_n}(b_1(\bk))\cdots\zeta^{}_{\cF_n}(b_l(\bk)).\label{sym sum star}
\end{align}
Here, $\frS_r$ denotes the symmetric group of degree $r$.  For $\sigma \in \frS_r$, set $\sigma(\bk)\coloneqq (k_{\sigma(1)}, \ldots, k_{\sigma(r)})$. $\cB=\{B_1, \ldots, B_l\}$ runs all partitions of $\{1, \ldots, r\}$, that is, $\cB=\{B_1, \ldots, B_l\}$ satisfies that $\{1, \ldots, r\}=\bigsqcup_{i=1}^l B_i$ and $B_i \neq \varnothing \; (1\leq i \leq l)$. Moreover, we set $c(\cB)\coloneqq (\#B_1-1)!\cdots(\#B_l-1)!$ and $b_i(\bk)\coloneqq \sum_{j \in B_i}k_j$.
\end{theorem}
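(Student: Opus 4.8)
The plan is to deduce both identities from the single arithmetic input available, namely the harmonic relation \eqref{eq:harmonic_relation}, together with standard combinatorics of the partition lattice; no convergence or admissibility issue arises because $\cF_n$-MZVs are defined for every index. For an unordered set partition $\Pi$ of $\{1,\dots,r\}$ with blocks $B_1,\dots,B_l$, write $b_i\coloneqq\sum_{j\in B_i}k_j$, $c(\Pi)\coloneqq\prod_{i}(\#B_i-1)!$, $l(\Pi)\coloneqq l$, and set
\[
P(\Pi)\coloneqq \zeta^{}_{\cF_n}(b_1)\cdots\zeta^{}_{\cF_n}(b_l),\qquad S(\Pi)\coloneqq \sum_{\tau\in\frS_l}\zeta^{}_{\cF_n}\bigl(b_{\tau(1)},\dots,b_{\tau(l)}\bigr),
\]
so that $S(\Pi)$ is the sum of $\cF_n$-MZVs over all orderings of the blocks. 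With $\hat 0$ the partition into singletons, the left-hand side of \eqref{sym sum non star} is exactly $S(\hat 0)$.

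First I would record the stuffle expansion of a product of depth-one letters,
\[
e_{k_1}*\cdots *e_{k_r}=\sum_{(C_1,\dots,C_s)}e_{b(C_1)}\cdots e_{b(C_s)},
\]
the sum running over all \emph{ordered} set partitions $(C_1,\dots,C_s)$ of $\{1,\dots,r\}$ with $b(C)\coloneqq\sum_{j\in C}k_j$; this is immediate by induction on $r$ from the defining recursion of $*$. Applying $Z_{\cF_n}$ and using \eqref{eq:harmonic_relation}, then grouping ordered set partitions by their underlying unordered partition, yields the key relation $P(\Pi_0)=\sum_{\Pi\geq\Pi_0}S(\Pi)$, where $\Pi\geq\Pi_0$ means $\Pi$ is a coarsening of $\Pi_0$; the case $\Pi_0=\hat 0$ reads $\zeta^{}_{\cF_n}(k_1)\cdots\zeta^{}_{\cF_n}(k_r)=\sum_{\Pi}S(\Pi)$.

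For \eqref{sym sum non star} I would substitute this relation into the proposed right-hand side and interchange the summations:
\[
\sum_{\Pi}(-1)^{r-l(\Pi)}c(\Pi)P(\Pi)=\sum_{\Pi'}S(\Pi')\Bigl(\sum_{\Pi\leq\Pi'}(-1)^{r-l(\Pi)}c(\Pi)\Bigr).
\]
The inner bracket factors over the blocks of $\Pi'$ (using $r-l(\Pi)=\sum_{B'}(\#B'-j)$ and the multiplicativity of $c$), so everything collapses onto the single-block identity $\sum_{\pi}(-1)^{m-j(\pi)}\prod_{b\in\pi}(\#b-1)!=\delta_{m,1}$, the sum being over partitions $\pi$ of an $m$-element set and $j(\pi)$ the number of blocks. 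This is precisely $\sum_{\sigma\in\frS_m}\mathrm{sgn}(\sigma)=\delta_{m,1}$ read through the cycle decomposition: a partition $\pi$ arises as the cycle support of exactly $\prod_{b\in\pi}(\#b-1)!$ permutations, each of sign $(-1)^{m-j(\pi)}$. The bracket therefore equals $1$ for $\Pi'=\hat 0$ and $0$ otherwise, leaving $S(\hat 0)$.

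For \eqref{sym sum star} I would expand each $\zeta^\star_{\cF_n}(\sigma(\bk))$ by its defining sum over comma/plus insertions; a pair (permutation $\sigma$, insertion pattern) is the same datum as an ordered set partition $(C_1,\dots,C_s)$, and each such ordered set partition is realized exactly $\prod_j(\#C_j)!$ times. Grouping by underlying unordered partition gives $\sum_{\sigma}\zeta^\star_{\cF_n}(\sigma(\bk))=\sum_{\Pi}\bigl(\prod_{B\in\Pi}(\#B)!\bigr)S(\Pi)$. On the other hand, $\sum_{\Pi'}c(\Pi')P(\Pi')=\sum_{\Pi}S(\Pi)\bigl(\sum_{\Pi'\leq\Pi}c(\Pi')\bigr)$, and the inner sum reduces by the same factorization to the single-block identity $\sum_{\pi}\prod_{b\in\pi}(\#b-1)!=m!$, i.e.\ $\sum_{\sigma\in\frS_m}1=m!$ via the identical cycle decomposition; hence $\sum_{\Pi'\leq\Pi}c(\Pi')=\prod_{B\in\Pi}(\#B)!$ and the two expressions coincide. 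I expect the main obstacle to be purely bookkeeping: arranging the two interchanges of summation so that each collapses cleanly onto the per-block sign/factorial identity, and correctly matching the exponent $r-l$ with $\sum_{B}(\#B-1)$. There is no analytic content, as the harmonic relation is the only property of $\cF_n$-MZVs used, so the argument is uniform in $\cF\in\{\cA,\cS\}$ and in $n$.
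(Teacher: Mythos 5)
Your argument is correct and is, in substance, exactly the argument the paper invokes: the paper's proof consists of citing Hoffman's symmetric sum theorem argument (\cite[Theorem 4.1]{H2}), which is precisely this combination of the stuffle expansion over ordered set partitions, the harmonic relation \eqref{eq:harmonic_relation}, and the partition-lattice collapse via the per-block identities $\sum_{\sigma\in\frS_m}\mathrm{sgn}(\sigma)=\delta_{m,1}$ and $\sum_{\sigma\in\frS_m}1=m!$ read through cycle supports. The only detail worth flagging is that for $\cF=\cA$ the comma/plus expansion of $\zeta^{\star}_{\cA_n}$ is a (standard, easily verified) fact about the truncated sums rather than the definition, but this does not affect the validity of your proof.
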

\begin{proof}
Since $\cF_n$-MZVs satisfy the harmonic relation \eqref{eq:harmonic_relation}, we see that the desired formulas hold by the same argument as \cite[Theorem 4.1]{H2}.
\end{proof}
\subsection{$\cF_n$-sum formula over $I_{k,r}$ for $n=2,3$}
Next, we prove the $\cF_n$-sum formula over $I_{k,r}$ for $n=2, 3$. For positive integers $n,r$ and $k$ with $r\leq k$, $\cF \in \{\cA, \cS\}$ and $\bullet \in \{\varnothing, \star\}$, set
\begin{align*}
S^{\bullet}_{\cF_n; k, r}\coloneqq \sum_{\bk \in I_{k, r}}\zeta^\bullet_{\cF_n}(\bk),
\end{align*}
where $I_{k,r}$ denotes the set of all indices $\bk$ with $\wt(\bk)=k$ and $\dep(\bk)=r$.

\begin{theorem}[{$\cF_n$-sum formula over $I_{k,r}$ for $n=2,3$}]\label{sum formula F3}
For positive integers $r$ and $k$ with $r\leq k$, we have
\begin{align}\label{eq: SF of F2}
S_{\cF_2; k, r}=(-1)^{r-1}\binom{k}{r}\frZ_{\cF_2}(k+1)x_{\cF_2}, \quad S^{\star}_{\cF_2; k, r}=\binom{k}{r}\frZ_{\cF_2}(k+1)x_{\cF_2}.
\end{align}
Moreover, we have
\begin{equation}\label{eq: SF of F3}
S_{\cF_3; k, r}
=(-1)^{k+r-1}\Biggl[\binom{k}{r}\frZ_{\cF_3}(k+1)x_{\cF_3}+\Biggl\{\frac{k+1}{2}\binom{k}{r}\frZ_{\cF_3}(k+2)-\frac{1}{r!}\cdot T_{k,r}\Biggr\}x_{\cF_3}^2\Biggr]
\end{equation}
and
\begin{equation}\label{eq: SF of F3 star}
S^{\star}_{\cF_3; k, r}
=(-1)^{k}\Biggl[\binom{k}{r}\frZ_{\cF_3}(k+1)x_{\cF_3}+\Biggl\{\frac{k+1}{2}\binom{k}{r}\frZ_{\cF_3}(k+2)+\frac{1}{r!}\cdot T_{k,r}\Biggr\}x_{\cF_3}^2\Biggr],
\end{equation}
where
\[
T_{k,r}=\sum_{\substack{B_1\sqcup B_2=\{1,\dots,r\} \\ B_1, B_2\neq \varnothing}}\sum_{\substack{b_1+b_2=k \\ b_1\geq \#B_1, b_2\geq \#B_2}}(b_1)_{\#B_1}(b_2)_{\#B_2}\cdot \frZ_{\cF_3}(b_1+1)\frZ_{\cF_3}(b_2+1)
\]
and the symbol $(n)_m$ denotes $n(n-1)\cdots (n-m+1)$.
\end{theorem}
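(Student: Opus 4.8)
The plan is to derive all four identities from the $\cF_n$-symmetric sum formula (Theorem~\ref{sym sum}) together with the depth-$1$ evaluation (Theorem~\ref{thm:dep-1_general}), exploiting the nilpotency $x_{\cF_n}^n=0$ in the relevant ring (indeed $\pp_n^n=(p^n\bmod p^n)_p=0$ in $\cA_n$ and $(t\bmod t^n)^n=0$ in $\overline{\cZ}\jump{t}/(t^n)$). First I would sum the left-hand side of \eqref{sym sum non star} over $\bk\in I_{k,r}$. Since for each target $\bl\in I_{k,r}$ and each $\sigma\in\frS_r$ there is exactly one $\bk=\sigma^{-1}(\bl)\in I_{k,r}$ with $\sigma(\bk)=\bl$, we obtain
\[
\sum_{\bk\in I_{k,r}}\sum_{\sigma\in\frS_r}\zeta^{}_{\cF_n}(\sigma(\bk))=r!\,S_{\cF_n;k,r},
\]
and likewise $\sum_{\bk\in I_{k,r}}\sum_{\sigma}\zeta^{\star}_{\cF_n}(\sigma(\bk))=r!\,S^{\star}_{\cF_n;k,r}$. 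Applying the right-hand side of Theorem~\ref{sym sum} and interchanging the order of summation, I would fix a partition $\cB=\{B_1,\dots,B_l\}$ with $\#B_i=r_i$ and count, for a prescribed tuple $(b_1,\dots,b_l)$ with $b_i\geq r_i$ and $b_1+\cdots+b_l=k$, the number of $\bk\in I_{k,r}$ with $b_i(\bk)=b_i$; since the blocks partition the positions, this number is $\prod_{i=1}^l\binom{b_i-1}{r_i-1}$. Hence
\[
r!\,S_{\cF_n;k,r}=\sum_{\cB=\{B_1,\dots,B_l\}}(-1)^{r-l}c(\cB)\sum_{\substack{b_1+\cdots+b_l=k\\ b_i\geq\#B_i}}\prod_{i=1}^l\binom{b_i-1}{\#B_i-1}\zeta^{}_{\cF_n}(b_i),
\]
with the analogous identity (all signs $+$) for the star sum.

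For $\cF_2$ the relation $x_{\cF_2}^2=0$ forces every $\zeta^{}_{\cF_2}(b_i)$ to be a multiple of $x_{\cF_2}$, so any product with $l\geq2$ factors vanishes; only the trivial partition $\cB=\{\{1,\dots,r\}\}$ survives, contributing $(-1)^{r-1}(r-1)!\binom{k-1}{r-1}\zeta^{}_{\cF_2}(k)$. Substituting $\zeta^{}_{\cF_2}(k)=(-1)^k k\,\frZ_{\cF_2}(k+1)x_{\cF_2}$ from Theorem~\ref{thm:dep-1_general}, absorbing $(-1)^k$ (recall $\frZ_{\cF_2}(k+1)x_{\cF_2}=0$ unless $k$ is even), and using $k\binom{k-1}{r-1}=r\binom{k}{r}$ to cancel $r!$, yields \eqref{eq: SF of F2}; the star case is identical, the sign $(-1)^{r-1}$ being replaced by $+1$.

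For $\cF_3$ the relation $x_{\cF_3}^3=0$ keeps only the partitions with $l=1$ and $l=2$. For $l=1$ I would expand $\zeta^{}_{\cF_3}(k)=(-1)^k\{k\frZ_{\cF_3}(k+1)x_{\cF_3}+\binom{k+1}{2}\frZ_{\cF_3}(k+2)x_{\cF_3}^2\}$ and simplify with $k\binom{k-1}{r-1}=r\binom{k}{r}$, producing the $\binom{k}{r}\frZ_{\cF_3}(k+1)x_{\cF_3}$ term and the $\tfrac{k+1}{2}\binom{k}{r}\frZ_{\cF_3}(k+2)x_{\cF_3}^2$ term. For $l=2$ each factor already carries one power of $x_{\cF_3}$, so modulo $x_{\cF_3}^3$ only the leading terms of the depth-$1$ formula contribute and $\zeta^{}_{\cF_3}(b_1)\zeta^{}_{\cF_3}(b_2)=(-1)^k b_1 b_2\frZ_{\cF_3}(b_1+1)\frZ_{\cF_3}(b_2+1)x_{\cF_3}^2$. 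The key bookkeeping step is the falling-factorial identity $(\#B_i-1)!\binom{b_i-1}{\#B_i-1}b_i=(b_i)_{\#B_i}$, which turns the $l=2$ contribution into exactly $(-1)^{k+r}T_{k,r}x_{\cF_3}^2$; dividing by $r!$ and collecting the overall sign $(-1)^{k+r-1}$ gives \eqref{eq: SF of F3}. The star formula \eqref{eq: SF of F3 star} follows verbatim from \eqref{sym sum star}, the only change being that the $l=2$ block enters with a $+$ sign, which flips the sign in front of $T_{k,r}$.

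The conceptual content lies entirely in the first paragraph; the remaining work is elementary but sign-sensitive. The main obstacle I anticipate is the careful matching of the $l=2$ contribution to $T_{k,r}$, i.e. verifying that summing the products $(b_1)_{\#B_1}(b_2)_{\#B_2}\frZ_{\cF_3}(b_1+1)\frZ_{\cF_3}(b_2+1)$ over all ordered two-block partitions reproduces the symmetric expression in the statement, together with tracking the interplay between the signs $(-1)^k$ (which can be dropped whenever $\frZ_{\cF_3}$ is nonzero, since $\frZ_{\cF_3}(m)=0$ for even $m$) and the partition sign $(-1)^{r-l}$.
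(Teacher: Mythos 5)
Your proposal is correct and follows essentially the same route as the paper: apply the $\cF_n$-symmetric sum formula summed over $I_{k,r}$, use nilpotency of $x_{\cF_n}$ to discard partitions with too many blocks, count indices with prescribed block-sums via $\#I_{b,s}=\binom{b-1}{s-1}$, and substitute the depth-$1$ evaluation. The only cosmetic difference is that you prove the $\cF_2$ case directly (only the one-block partition survives), whereas the paper obtains it by reducing the $\cF_3$ formulas modulo $x_{\cF_3}^2$.
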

\begin{remark}
If $k=b_1+b_2$ is odd, since $\frZ_{\cF_3}(k+1)$ and $\frZ_{\cF_3}(b_1+1)\frZ_{\cF_3}(b_2+1)$ are $0$, we have
\begin{align*}
S_{\cF_3; k, r}=(-1)^r\frac{k+1}{2}\binom{k}{r}\frZ_{\cF_3}(k+2)x_{\cF_3}^2, \quad S^\star_{\cF_3; k, r}=-\frac{k+1}{2}\binom{k}{r}\frZ_{\cF_3}(k+2)x_{\cF_3}^2,
\end{align*}
which were first proved by the third author and Yamamoto \cite[Theorem 2.5]{SY} for $\cF=\cA$.
\end{remark}
\begin{proof}[Proof of Theorem \ref{sum formula F3}]
Since \eqref{eq: SF of F2} is obtained from \eqref{eq: SF of F3} and \eqref{eq: SF of F3 star} by taking modulo $x^2_{\cF_3}$, it is sufficient to prove \eqref{eq: SF of F3} and \eqref{eq: SF of F3 star}.
Note that $(-1)^{k}\frZ_{\cF_2}(k+1)x_{\cF_2}=\frZ_{\cF_2}(k+1)x_{\cF_2}$ holds because if $k$ is odd, then $\frZ_{\cF_2}(k+1)x_{\cF_2}=0$.

Let us prove \eqref{eq: SF of F3}. By Theorem~\ref{thm:dep-1_general}, we have
\begin{align}\label{single F3}
\zeta^{}_{\cF_3}(k)=(-1)^k\left\{k\frZ_{\cF_3}(k+1)x_{\cF_3}+\binom{k+1}{2}\frZ_{\cF_3}(k+2)x_{\cF_3}^2\right\}.
\end{align}
Since $x_{\cF_3}^l$ with $l\geq 3$ vanishes, by \eqref{sym sum non star}, we have
\begin{align*}
S_{\cF_3;k,r}&=\frac{1}{r!}\sum_{\bk\in I_{k,r}}\sum_{\sigma\in\frS_r}\zeta_{\cF_3}(\sigma(\bk))\\
&=\frac{1}{r!}\sum_{\bk\in I_{k,r}}\Biggl\{(-1)^{r-1}(r-1)!\zeta^{}_{\cF_3}(k)\\
&\qquad\qquad+(-1)^{r-2}\sum_{\substack{B_1 \sqcup B_2=\{1, \ldots, r\} \\ B_1, B_2 \neq \varnothing}}(\#B_1-1)!(\#B_2-1)!\zeta^{}_{\cF_3}(b_1(\bk))\zeta^{}_{\cF_3}(b_2(\bk))\Biggr\}.
\end{align*}
We calculate the right-hand side. Since $\# I_{k,r}=\binom{k-1}{r-1}$, by \eqref{single F3}, we have
\[
\sum_{\bk\in I_{k,r}}\frac{(-1)^{r-1}}{r}\zeta^{}_{\cF_3}(k)=(-1)^{k+r-1}\left\{\binom{k}{r}\frZ_{\cF_3}(k+1)x_{\cF_3}+\frac{k+1}{2}\binom{k}{r}\frZ_{\cF_3}(k+2)x_{\cF_3}^2\right\}.
\]
Furthermore, since $\#\{\bk=(k_1,\dots, k_r)\in I_{k,r} \mid \sum_{i\in B_1}k_i=b_1\}=\#I_{b_1, \#B_1}\cdot \#I_{b_2, \#B_2}$ for $B_1, B_2\neq\varnothing$ with $B_1\sqcup B_2=\{1,\dots, r\}$ and $b_1, b_2$ with $b_1+b_2=k$, $b_1\geq \#B_1$, $b_2\geq \#B_2$, we have 
\begin{align*}
&\sum_{\bk\in I_{k,r}}\sum_{\substack{B_1 \sqcup B_2=\{1, \ldots, r\} \\ B_1, B_2 \neq \varnothing}}(\#B_1-1)!(\#B_2-1)!\zeta^{}_{\cF_3}(b_1(\bk))\zeta^{}_{\cF_3}(b_2(\bk))\\
&=\sum_{\substack{B_1 \sqcup B_2=\{1, \ldots, r\} \\ B_1, B_2 \neq \varnothing}}\sum_{\substack{b_1+b_2=k \\ b_1\geq \#B_1, b_2\geq \#B_2}}\sum_{\substack{\bk=(k_1,\dots,k_r) \in I_{k,r} \\ \sum_{i\in B_1}k_i=b_1}}(\#B_1-1)!(\#B_2-1)!\zeta^{}_{\cF_3}(b_1)\zeta^{}_{\cF_3}(b_2)\\
&=(-1)^k\sum_{\substack{B_1 \sqcup B_2=\{1, \ldots, r\} \\ B_1, B_2 \neq \varnothing}}\sum_{\substack{b_1+b_2=k \\ b_1\geq \#B_1, b_2\geq \#B_2}}(b_1)_{\#B_1}(b_2)_{\#B_2}\cdot\frZ_{\cF_3}(b_1+1)\frZ_{\cF_3}(b_2+1)x_{\cF_3}^2.
\end{align*}
Note that all terms of $x_{\cF_3}^l$ with $l\geq3$ for $\cF_3$-MZVs vanish. 
This completes the calculation for \eqref{eq: SF of F3}.
The formula \eqref{eq: SF of F3 star} is obtained by a similar calculation using \eqref{sym sum star}.
\end{proof}
%
\subsection{$\cF_2$-sum formula over $I_{k,r,i}$}
In this subsection, we prove the $\cF_2$-sum formula over $I_{k,r,i}$.
For positive integers $k, r, i$ with $1\leq i \leq r<k$, let $I_{k, r, i}$ denote the set of indices $\bk=(k_1,\dots,k_r)$ with $\wt(\bk)=k$, $\dep(\bk)=r$ and $k_i\geq 2$. For $\bullet \in \{\varnothing, \star\}$ and a positive integer $n$, set
\begin{align*}
S^{\bullet}_{\cF_n; k, r, i}\coloneqq \sum_{\bk \in I_{k, r, i}}\zeta^{\bullet}_{\cF_n}(\bk).
\end{align*}

Saito and Wakabayashi \cite{SW1} ($\cF=\cA$) and Murahara \cite{Mur} ($\cF=\cS$) proved that 
\begin{align*}
S_{\cF_1; k, r, i}&=(-1)^{i}\left\{\binom{k-1}{i-1}+(-1)^r\binom{k-1}{r-i}\right\}\frZ_{\cF_1}(k), \\
S^\star_{\cF_1; k, r, i}&=(-1)^{i}\left\{\binom{k-1}{r-i}+(-1)^r\binom{k-1}{i-1}\right\}\frZ_{\cF_1}(k).
\end{align*}

If $k$ is even, then we have $S_{\cF_1; k, r, i}=S^\star_{\cF_1; k, r, i}=0$ by $\frZ_{\cF_1}(k)=0$.
Thus it is a natural question what is a lifting of $S^\bullet_{\cF_1; k,r,i}$ to $\cF_2$, that is, $S^\bullet_{\cF_2; k, r, i}$.
We give the answer in the following form.
\begin{theorem}[{$\cF_2$-sum formula for $I_{k,r,i}$}]\label{i adm sum formula}
Let $k, r, i$ be positive integers with $1\leq i \leq r<k$ and suppose that $k$ is even. Then we have
\begin{align*}
S_{\cF_2; k, r, i}=(-1)^{r-1}\frac{b_{k, r, i}}{2}\cdot\frZ_{\cF_2}(k+1)x_{\cF_2},
\quad S^{\star}_{\cF_2; k, r, i}=\frac{b^\star_{k, r, i}}{2}\cdot\frZ_{\cF_2}(k+1)x_{\cF_2},
\end{align*}
where
\begin{align*}
b_{k, r, i}\coloneqq \binom{k-1}{r}+(-1)^{r-i}\left\{(k-r)\binom{k}{i-1}+\binom{k-1}{i-1}+(-1)^{r-1}\binom{k-1}{r-i}\right\}
\end{align*}
and
\begin{align*}
b^\star_{k, r, i}\coloneqq \binom{k-1}{r}+(-1)^{i-1}\left\{(k-r)\binom{k}{r-i}+\binom{k-1}{r-i}+(-1)^{r-1}\binom{k-1}{i-1}\right\}.
\end{align*}
\end{theorem}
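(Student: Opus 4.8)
The plan is to treat $\cF=\cA$ and $\cF=\cS$ simultaneously, using only the double shuffle relation (Theorem~\ref{DSRn}) together with the sum formula over $I_{k,r}$ (Theorem~\ref{sum formula F3}) and the explicit depth~$1$ and depth~$2$ evaluations (Theorems~\ref{thm:dep-1_general} and \ref{S_2 for k_1 k_2}). The engine is the reversal formula, i.e.\ the case $\bk=\varnothing$ of the shuffle relation \eqref{eq:shuffle_relation}: since $n=2$ forces $\wt(\bl')\le 1$, for any index $\boldsymbol{m}=(m_1,\dots,m_r)$ of weight $w$ it collapses to the clean identity
\[
\sum_{i=1}^{r}m_i\,\zeta^{}_{\cF_2}(\boldsymbol{m}+\mathbf{1}_i)\,x_{\cF_2}=(-1)^{w}\zeta^{}_{\cF_2}(\overline{\boldsymbol{m}})-\zeta^{}_{\cF_2}(\boldsymbol{m}),
\]
where $\boldsymbol{m}+\mathbf{1}_i$ denotes $\boldsymbol{m}$ with its $i$-th entry raised by $1$ and $\overline{\boldsymbol{m}}$ is the reverse. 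The point is that, as $\boldsymbol{m}$ runs over $I_{k-1,r}$, the index $\boldsymbol{m}+\mathbf{1}_i$ runs exactly over $I_{k,r,i}$; thus this identity ties the quantities we want to reversed sums, which the hypothesis that $k$ is even renders tractable. For example, summing over $\boldsymbol{m}\in I_{k-1,r}$ and using $\sum_{\boldsymbol{m}}\zeta^{}_{\cF_2}(\overline{\boldsymbol{m}})=S_{\cF_2;k-1,r}=(-1)^{r-1}\binom{k-1}{r}\frZ_{\cF_2}(k)x_{\cF_2}=0$ (as $\frZ_{\cF_2}(k)=0$ for even $k$) shows that the total weighted increment $\sum_{i}\sum_{\boldsymbol{m}}m_i\,\zeta^{}_{\cF_2}(\boldsymbol{m}+\mathbf{1}_i)$ vanishes.

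The difficulty with this identity alone is the weight factor $m_i$: it controls only the weighted sums $\sum_{\boldsymbol{m}}m_i\,\zeta^{}_{\cF_2}(\boldsymbol{m}+\mathbf{1}_i)$ and, after summing over $i$, only their total, whereas we need the individual \emph{unweighted} sums $S_{\cF_2;k,r,i}$. To isolate these I would run an induction on the depth $r$, peeling off the first entry by the harmonic relation \eqref{eq:harmonic_relation}:
\[
\zeta^{}_{\cF_2}(k_1)\,\zeta^{}_{\cF_2}(k_2,\dots,k_r)=\zeta^{}_{\cF_2}(\bk)+\sum_{p=1}^{r-1}\zeta^{}_{\cF_2}(\text{insert }k_1\text{ at gap }p)+\sum_{j=2}^{r}\zeta^{}_{\cF_2}(k_2,\dots,k_1+k_j,\dots,k_r).
\]
Summing this over $I_{k,r,i}$ expresses $S_{\cF_2;k,r,i}$ through products of a depth~$1$ factor (Theorem~\ref{thm:dep-1_general}) with lower-depth restricted sums, together with the depth-$(r-1)$ sums arising from the merge terms, to which the inductive hypothesis and the unconstrained sum formula \eqref{eq: SF of F2} apply. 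The base cases $r=1$ and $r=2$ are supplied directly by Theorems~\ref{thm:dep-1_general} and \ref{S_2 for k_1 k_2}, whose $\binom{k+1}{k_1}$-type coefficients are exactly the source of the binomials in $b_{k,r,i}$. Throughout, evenness of $k$ guarantees that the $x^0_{\cF_2}$-component equals $S_{\cF_1;k,r,i}=0$ (Saito--Wakabayashi \cite{SW1} and Murahara \cite{Mur}), so only the coefficient of $x_{\cF_2}$ needs to be tracked.

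For the star case I would pass from \eqref{S2 1 2 1 non star}-type data to the star version via the antipode relation (Proposition~\ref{antipode}), exactly as in the earlier proofs in this section; the reversal symmetry $i\leftrightarrow r+1-i$ relating $b_{k,r,i}$ and $b^{\star}_{k,r,i}$ (compare $\binom{k}{i-1}$ with $\binom{k}{r-i}$ and the signs $(-1)^{r-i}$ with $(-1)^{i-1}$) is precisely the reversal built into that relation. The main obstacle I anticipate is the final bookkeeping: after the reduction one is left with an accumulated alternating sum of binomial coefficients that must be matched to the closed forms $b_{k,r,i}$ and $b^{\star}_{k,r,i}$. In particular the term $(k-r)\binom{k}{i-1}$, whose weight-linear prefactor $k-r$ betrays its origin in the weight $m_i$ of the reversal identity, will require a dedicated binomial computation of the same flavour as, and comparable in length to, the identity $C=1+(-1)^a\binom{a+b+3}{b+2}$ deferred to the Appendix (Proposition~\ref{prop:true value of C}). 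I would expect to extract and prove it as a separate combinatorial lemma rather than inline.
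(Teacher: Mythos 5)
Your proposal lands in the right family of ideas --- a recurrence among the $S_{\cF_2;k,r,i}$ coming from the harmonic relation, an induction on the depth $r$, and a deferred binomial verification --- but as written it has a structural hole that the paper's proof is specifically designed to avoid. Peeling one entry off a depth-$r$ index via the stuffle product yields, for each $i$ with $1\le i\le r-1$, a relation of the shape
\[
(r-i)S_{\cF_2;k,r,i}+iS_{\cF_2;k,r,i+1}+(k-r)S_{\cF_2;k,r-1,i}=\sum_{l}\zeta^{}_{\cF_2}(l)\,S_{\cF_2;k-l,r-1,i},
\]
that is, only $r-1$ equations among the $r$ unknowns $S_{\cF_2;k,r,1},\dots,S_{\cF_2;k,r,r}$ at fixed depth $r$. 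Your forward induction from the base cases $r=1,2$ therefore cannot solve for the depth-$r$ values: the system is underdetermined by one equation at every step, and you never supply the missing relation. The paper closes the system by running the induction \emph{backward} from the maximal depth $r=k-1$, where $S_{\cF_2;k,k-1,i}$ is the single value $\zeta^{}_{\cF_2}(\{1\}^{i-1},2,\{1\}^{k-i-1})$ furnished by Theorem~\ref{S2 1 2 1} --- an ingredient your proposal never invokes --- after which each relation determines exactly one new depth-$(r-1)$ quantity because the coefficient $k-r$ of $S_{\cF_2;k,r-1,i}$ is nonzero. Two further points you would still need: the right-hand side above vanishes for even $k$ because each summand $\zeta^{}_{\cF_2}(l)S^{\bullet}_{\cF_2;k-l,r-1,i}$ is a multiple of $x_{\cF_2}^2=0$ (using the depth-$1$ evaluation and the Saito--Wakabayashi/Murahara $\cF_1$-sum formulas); and the closed forms are verified not by a fresh Appendix-style computation but by checking that $b_{k,r,i}$ and $b^{\star}_{k,r,i}$ satisfy the same homogeneous recurrence, which reduces to four elementary one-line binomial identities.

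Separately, the ``engine'' you propose does not actually drive anything here. In your reversal identity the weighted sum $\sum_i m_i\,\zeta^{}_{\cF_2}(\boldsymbol{m}+\mathbf{1}_i)$ carries an overall factor of $x_{\cF_2}$, so summing over $\boldsymbol{m}\in I_{k-1,r}$ and using $S_{\cF_2;k-1,r}=0$ only constrains the image of that weighted sum modulo $x_{\cF_2}$, i.e.\ an even-weight $\cF_1$-statement that is already trivially zero; it places no constraint on the $\cF_2$-quantities $S_{\cF_2;k,r,i}$ you are trying to compute. Likewise, using the reversal formula to relate $S_{\cF_2;k,r,i}$ to $S_{\cF_2;k,r,r+1-i}$ introduces new weighted $\cF_1$-sums of odd weight $k+1$ that are themselves unevaluated. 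The appearance of the factor $k-r$ in $b_{k,r,i}$ comes from the coefficient of $S_{\cF_2;k,r-1,i}$ in the harmonic recurrence, not from a weight factor in the reversal formula. Your suggestion for the star case (Proposition~\ref{antipode}) is viable --- the paper notes it as the method of \cite{SY} --- but the paper itself simply reruns the same backward induction with $b^{\star}_{k,r,i}$.
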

The case $\cF=\cA$ of Theorem~\ref{i adm sum formula} was proved by the third author and Yamamoto \cite{SY}.
In this subsection, we reprove their result and prove the case $\cF=\cS$ simultaneously by a different method.
\begin{lemma}[Recurrence relations]
For positive integers $k, r, i$ with $2\leq i+1\leq r\leq k-1$, we have
\begin{align*}
(r-i)S_{\cF_2; k, r, i}+iS_{\cF_2; k, r, i+1}+(k-r)S_{\cF_2; k, r-1, i}&=\sum_{l=1}^{k-r}\zeta^{}_{\cF_2}(l)S_{\cF_2; k-l, r-1, i}, \\
(r-i)S^\star_{\cF_2; k, r, i}+iS^\star_{\cF_2; k, r, i+1}-(k-r)S^\star_{\cF_2; k, r-1, i}&=\sum_{l=1}^{k-r}\zeta^{}_{\cF_2}(l)S^\star_{\cF_2; k-l, r-1, i}.
\end{align*}
\end{lemma}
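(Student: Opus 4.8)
The plan is to obtain both recurrences from the harmonic relation \eqref{eq:harmonic_relation} for $\cF_2$-MZVs and its star analogue, by expanding the products that appear on the right-hand sides. Since $\zeta^{}_{\cF_2}(l)=\zeta^\star_{\cF_2}(l)$ in depth one, I would rewrite the non-star right-hand side as
\[
\sum_{l=1}^{k-r}\zeta^{}_{\cF_2}(l)S_{\cF_2; k-l, r-1, i}=\sum_{l=1}^{k-r}\sum_{\bl\in I_{k-l,r-1,i}}Z_{\cF_2}(e_l\ast e_{\bl}),
\]
and expand each stuffle $e_l\ast e_{\bl}$, $\bl=(l_1,\dots,l_{r-1})$, by the definition of $\ast$. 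The product splits into $r$ \emph{insertion} words of depth $r$ (inserting $e_l$ into one of the $r$ gaps of $e_{\bl}$) and $r-1$ \emph{merge} words of depth $r-1$ (replacing some $e_{l_j}$ by $e_{l_j+l}$). Reorganizing the double sum according to the resulting index reduces the claim to two counting problems.

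For the insertion words I would fix a target $\boldsymbol{m}=(m_1,\dots,m_r)\in I_{k,r}$ and count the positions $j$ for which deleting $m_j$ yields an index of $I_{k-m_j,\,r-1,\,i}$. Deleting $m_j$ moves the entry in slot $i$: the $i$-th component of the shortened index equals $m_i$ when $j>i$ and $m_{i+1}$ when $j\le i$. Hence $\boldsymbol{m}$ is produced with multiplicity $r-i$ if $m_i\ge2$ (the $r-i$ choices $j>i$) plus $i$ if $m_{i+1}\ge2$ (the $i$ choices $j\le i$); in each case the constraint $m_j\le k-r$ is automatic because admissibility at slot $i$ forces $\wt(\bl)\ge r$. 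Summing over $\boldsymbol{m}\in I_{k,r}$ thus contributes exactly $(r-i)S_{\cF_2;k,r,i}+iS_{\cF_2;k,r,i+1}$.

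For the merge words I would fix $\boldsymbol{n}=(n_1,\dots,n_{r-1})\in I_{k,r-1}$ and count the pairs $(j,l)$ with $n_j=l_j+l$ and $\bl\in I_{k-l,r-1,i}$. When $n_i\ge2$ the admissibility at slot $i$ lets $l$ run over $1\le l\le n_j-1$ for each $j\ne i$ and over $1\le l\le n_i-2$ for $j=i$, so the number of pairs telescopes:
\[
\sum_{j\ne i}(n_j-1)+(n_i-2)=\Bigl(\sum_{j=1}^{r-1}(n_j-1)\Bigr)-1=(k-r+1)-1=k-r,
\]
while no pair survives when $n_i=1$. This produces $(k-r)S_{\cF_2;k,r-1,i}$, and together with the insertion count it gives the first recurrence. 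The star recurrence follows from the identical counts applied to the star analogue of the harmonic relation, in which the merge words carry a minus sign (this is the formal consequence of \eqref{eq:harmonic_relation} reflecting $\zeta^\star(a)\zeta^\star(b)=\zeta^\star(a,b)+\zeta^\star(b,a)-\zeta^\star(a+b)$); the sign turns $+(k-r)S^\star_{\cF_2;k,r-1,i}$ into $-(k-r)S^\star_{\cF_2;k,r-1,i}$, which is exactly the discrepancy between the two displayed formulas.

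The main obstacle I expect is the bookkeeping in the merge count: one must verify that the telescoping constant is genuinely $k-r$ independently of $\boldsymbol{n}$, and, more delicately, that the summation range $1\le l\le k-r$ in the statement coincides with the set of pairs that actually occur. The apparently missing value $l=k-r+1$ is excluded precisely because it would force the shortened index to equal $(\{1\}^{r-1})$, whose $i$-th entry is $1$ and so violates the admissibility defining $I_{\,\cdot\,,\,r-1,\,i}$. Checking this boundary compatibility, together with justifying the star analogue of \eqref{eq:harmonic_relation} for $\cF_2$-MZSVs, is the one place where genuine care is needed; the rest is a routine reorganization of sums.
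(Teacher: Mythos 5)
Your proposal is correct and follows essentially the same route as the paper, which expands $\sum_{(k_1,\dots,k_{r-1},l)\in I_{k,r,i}}\zeta^{}_{\cF_2}(l)\zeta^{\bullet}_{\cF_2}(k_1,\dots,k_{r-1})$ via the harmonic relation and counts insertion and merge terms, deferring the combinatorial bookkeeping to Saito--Wakabayashi \cite{SW1}. Your write-up simply makes that counting explicit (and your multiplicity computations, including the telescoping constant $k-r$ and the sign on the merge terms in the star case, are all correct).
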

\begin{proof}
Let $\bullet\in\{\varnothing, \star\}$.
From the same argument in \cite[Lemma 2.1, Proposition 2.2]{SW1}, we see that the sum of the product
\begin{align*}
\sum_{(k_1, \ldots, k_{r-1}, l) \in I_{k, r, i}}\zeta^{}_{\cF_2}(l)\zeta^\bullet_{\cF_2}(k_1, \ldots, k_{r-1})=\sum_{l=1}^{k-r}\zeta^{}_{\cF_2}(l)S^\bullet_{\cF_2; k-l, r-1, i}
\end{align*}
coincides with the left-hand side of the desired recurrence relation by the harmonic relation for $\cF_2$-MZVs.
\end{proof}
\begin{corollary}\label{cor:reccurence}
If $k$ is even, then we have
\begin{align*}
(r-i)S_{\cF_2; k, r, i}+iS_{\cF_2; k, r, i+1}+(k-r)S_{\cF_2; k, r-1, i}&=0, \\
(r-i)S^\star_{\cF_2; k, r, i}+iS^\star_{\cF_2; k, r, i+1}-(k-r)S^\star_{\cF_2; k, r-1, i}&=0
\end{align*}
for positive integers $k, r, i$ with $2\leq i+1\leq r\leq k-1$.
\end{corollary}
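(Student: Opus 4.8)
The plan is to derive Corollary~\ref{cor:reccurence} from the preceding Lemma by proving that, when $k$ is even, the right-hand side
\[
\sum_{l=1}^{k-r}\zeta^{}_{\cF_2}(l)\,S^{\bullet}_{\cF_2; k-l, r-1, i}
\]
vanishes for both $\bullet\in\{\varnothing,\star\}$. The left-hand sides of the two asserted identities are literally the left-hand sides of the Lemma, so the entire statement reduces to this vanishing.

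First I would record the depth-one evaluation. Specializing Theorem~\ref{thm:dep-1_general} to $n=2$ gives
\[
\zeta^{}_{\cF_2}(l)=(-1)^l\,l\,\frZ_{\cF_2}(l+1)\,x_{\cF_2},
\]
so every factor $\zeta^{}_{\cF_2}(l)$ is divisible by $x_{\cF_2}$. Since $x_{\cF_2}^2=0$ in both $\cA_2$ (where $\pp_2^2=(p^2\bmod p^2)_p=0$) and $\overline{\cZ}[t]/(t^2)$, the product $\zeta^{}_{\cF_2}(l)\,S^{\bullet}_{\cF_2; k-l, r-1, i}$ depends only on $S^{\bullet}_{\cF_2; k-l, r-1, i}\bmod x_{\cF_2}$. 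Termwise one has $\zeta^{\bullet}_{\cF_2}(\bk)\equiv\zeta^{\bullet}_{\cF_1}(\bk)\pmod{x_{\cF_2}}$, so this residue equals $S^{\bullet}_{\cF_1; k-l, r-1, i}$.

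I would then close the argument by a parity dichotomy, using the Saito--Wakabayashi/Murahara evaluation recalled above, which writes $S^{\bullet}_{\cF_1; k-l, r-1, i}$ as an integer multiple of $\frZ_{\cF_1}(k-l)$. If $l$ is odd, then $l+1$ is even and $\frZ_{\cF_2}(l+1)\,x_{\cF_2}=0$ (the vanishing against $x_{\cF_2}$ at even arguments already exploited in Section~\ref{sec:sp_value}), hence $\zeta^{}_{\cF_2}(l)=0$. If $l$ is even, then $k-l$ is even because $k$ is, so $\frZ_{\cF_1}(k-l)=0$ and the residue $S^{\bullet}_{\cF_1; k-l, r-1, i}$ vanishes; this forces $S^{\bullet}_{\cF_2; k-l, r-1, i}$ to be a multiple of $x_{\cF_2}$ and the product to be a multiple of $x_{\cF_2}^2=0$. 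Either way the $l$-th summand is zero, so the whole sum vanishes.

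The computation is entirely elementary; the only care needed is the bookkeeping that legitimizes passing from $S^{\bullet}_{\cF_2}$ to $S^{\bullet}_{\cF_1}$ inside the product, and the check that the constraints $1\le i\le r-1<k-l$ keep every summand in the regime where the $\cF_1$-sum formula applies (they do, since $1\le l\le k-r$ and $i+1\le r$). I do not anticipate a genuine obstacle: the mechanism is simply that $\zeta^{}_{\cF_2}(l)$ and $\frZ_{\cF_1}(k-l)$ can never be simultaneously nonzero once $k$ is even, and the relation $x_{\cF_2}^2=0$ does the rest.
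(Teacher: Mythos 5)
Your proposal is correct and follows essentially the same route as the paper: the left-hand sides are exactly those of the preceding lemma, and the right-hand side is killed termwise by the parity dichotomy ($l$ odd forces $\zeta^{}_{\cF_2}(l)=0$ since $\frZ_{\cF_2}(l+1)$ vanishes at even argument; $l$ even forces $S^{\bullet}_{\cF_2;k-l,r-1,i}\equiv 0 \bmod x_{\cF_2}$ via the Saito--Wakabayashi/Murahara evaluation, so the product dies against $x_{\cF_2}^2=0$). Your write-up just makes explicit the reduction from $\cF_2$ to $\cF_1$ modulo $x_{\cF_2}$, which the paper leaves implicit.
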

\begin{proof}
If $l$ is odd, then $\zeta^{}_{\cF_2}(l)=0$ by Theorem~\ref{thm:dep-1_general} or \eqref{rep.k non star}.
If $l$ is even, then $\zeta^{}_{\cF_2}(l)$ is a multiple of $x_{\cF}$ by Theorem~\ref{thm:dep-1_general} or \eqref{rep.k non star} and $S^{\bullet}_{\cF_2; k-l, r-1, i}$ is also a multiple of $x_{\cF}$ by Saito--Wakabayashi and Murahara's sum formulas.
\end{proof}
\begin{proof}[Proof of Theorem \ref{i adm sum formula}]
We prove the non-star case by backward induction on $r\le k-1$.
Since
\begin{align*}
b_{k, k-1, i}
&=\binom{k-1}{k-1}+(-1)^{k-1-i}\left\{(k-k+1)\binom{k}{i-1}+\binom{k-1}{i-1}+(-1)^{k-2}\binom{k-1}{k-1-i}\right\}\\
&=1+(-1)^{i-1}\binom{k+1}{i},
\end{align*}
we have 
\begin{align*}
S_{\cF_2; k, k-1, i}&=\zeta^{}_{\cF_2}(\{1\}^{i-1}, 2, \{1\}^{k-i-1})=\frac{1}{2}\left\{1+(-1)^{i-1}\binom{k+1}{i}\right\}\frZ_{\cF_2}(k+1)x_{\cF_2}\\
&=\frac{b_{k, k-1, i}}{2}\cdot\frZ_{\cF_2}(k+1)x_{\cF_2},
\end{align*}
by the definition of $S_{\cF_2; k, r, i}$ and \eqref{S2 1 2 1 non star}.
Hence, the case $r=k-1$ is true.
To complete the induction step, by Corollary~\ref{cor:reccurence}, it suffices to prove that 
\begin{equation}\label{ind step}
(r-i)b_{k, r, i}+ib_{k, r, i+1}-(k-r)b_{k, r-1, i}=0
\end{equation}
holds for $2 \leq r\leq k-1$.
The left-hand side of \eqref{ind step} is
\begin{align*}
(r-i)&\left[\binom{k-1}{r}+(-1)^{r-i}\left\{(k-r)\binom{k}{i-1}+\binom{k-1}{i-1}+(-1)^{r-1}\binom{k-1}{r-i}\right\}\right] \\
+i&\left[\binom{k-1}{r}+(-1)^{r-i-1}\left\{(k-r)\binom{k}{i}+\binom{k-1}{i}+(-1)^{r-1}\binom{k-1}{r-i-1}\right\}\right] \\
-(k-r)&\left[\binom{k-1}{r-1}+(-1)^{r-i-1}\left\{(k-r+1)\binom{k}{i-1}+\binom{k-1}{i-1}+(-1)^{r-2}\binom{k-1}{r-i-1}\right\}\right]
\end{align*}
by definition.
By $\binom{k-1}{r-1}=\frac{r}{k-r}\binom{k-1}{r}$, we have
\begin{equation}\label{first terms}
(r-i)\binom{k-1}{r}+i\binom{k-1}{r}-(k-r)\binom{k-1}{r-1}=0.
\end{equation}
By $\binom{k}{i}=\frac{k-i+1}{i}\binom{k}{i-1}$, we have
\begin{equation}\label{second terms}
(r-i)(k-r)\binom{k}{i-1}-i(k-r)\binom{k}{i}+(k-r)(k-r+1)\binom{k}{i-1}=0.
\end{equation}
By $\binom{k-1}{i}=\frac{k-i}{i}\binom{k-1}{i-1}$, we have
\begin{equation}\label{third terms}
(r-i)\binom{k-1}{i-1}-i\binom{k-1}{i}+(k-r)\binom{k-1}{i-1}=0.
\end{equation}
By $\binom{k-1}{r-i}=\frac{k-r+i}{r-i}\binom{k-1}{r-i-1}$, we have
\begin{equation}\label{fourth terms}
(r-i)\binom{k-1}{r-i}-i\binom{k-1}{r-i-1}-(k-r)\binom{k-1}{r-i-1}=0.
\end{equation}
From \eqref{first terms}, \eqref{second terms}, \eqref{third terms} and \eqref{fourth terms}, we obtain \eqref{ind step} and we complete the proof of the formula for $S_{\cF_2; k, r, i}$. 
In the star case, we should prove
\begin{align*}
S^{\star}_{\cF_2; k, k-1, i}=\frac{b^{\star}_{k, k-1, i}}{2}\cdot \frZ_{\cF_2}(k+1)x_{\cF_2}
\end{align*}
and the recurrence relation
\begin{align*}
(r-i)b^\star_{k, r, i}+ib^\star_{k, r, i+1}-(k-r)b^\star_{k, r-1, i}=0.
\end{align*}
These are proved similarly to the non-star case.
\end{proof}
\begin{remark}
We can also prove the star case by connecting $S_{\cF_2; k, r, i}$ and $S^{\star}_{\cF_2; k, r, i}$ directly using Proposition \ref{antipode}. This is the method used in \cite{SY}.
\end{remark}
%
\appendix
\section{Proof of equality~\eqref{key binom calc}}
In this appendix, we prove the following proposition.
\begin{proposition}\label{prop:true value of C}
For non-negative integers $a$ and $b$, we have
\begin{align*}
C=1+(-1)^a\binom{a+b+3}{b+2};
\end{align*}
see the proof of Theorem~\ref{S2 1 2 1} for the definition of $C$.
\end{proposition}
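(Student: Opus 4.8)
The plan is to expand $C$ exactly as written in the proof of Theorem~\ref{S2 1 2 1} and split each of its three sums into a \emph{main} part, carrying a factor $\binom{a+b+3}{\cdots}$, and a \emph{correction} part, carrying the pure sign $(-1)^{s+m}$ (or $(-1)^{s+1}$ for the middle sum). Throughout write $N\coloneqq a+b+3$, which is \emph{odd} since $a+b$ is even by hypothesis. In every braced expression the outer and inner signs collapse, e.g.\ $(-1)^{s+m+1}(-1)^{s+m}=-1$, so the correction parts become sign-free double sums of products of binomials, while the main parts retain the nontrivial coupling through $\binom{N}{s+m+2}$.

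First I would dispose of the correction parts. Each inner sum over a complementary pair of indices is a Vandermonde convolution $\sum_{r+s=a}\binom{r+l+1}{r}\binom{s+m+1}{s}=\binom{a+l+m+3}{a}$, which together with the hockey-stick identity $\sum_{r=0}^{a}\binom{r+b+1}{b+1}=\binom{a+b+2}{b+2}$ and the partial alternating-sum identity $\sum_{n=0}^{j}(-1)^n\binom{M}{n}=(-1)^j\binom{M-1}{j}$ reduces the total correction to $(-1)^a\bigl[-b\binom{N-1}{a}-2\binom{N-1}{b+2}+\binom{N-2}{a-1}\bigr]$.

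The substance is the main part. For the first and third sums I would regroup by the single index $j\coloneqq s+m$ controlling $\binom{N}{s+m+2}$; for fixed $j$ the remaining summation is again a Vandermonde convolution equal to $\binom{a+b+1}{a}$, but taken over a \emph{restricted} range, so one must subtract the two out-of-range boundary terms, which simplify to $\binom{a+b-j}{b+1}$ and $\binom{j+1}{b+1}$. The middle sum has only one free index and is treated directly. Combining the three, the $\binom{a+b-j}{b+1}$-boundary contributions cancel, and the ``full convolution'' piece $\binom{a+b+1}{a}\sum_{j}(-1)^{j+1}\binom{N}{j+2}$ vanishes because the alternating partial row sum of $\binom{N}{\cdot}$ telescopes to $0$ once the parity of $N$ is used. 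What survives is $(-1)^a\bigl[2N\binom{N-2}{b+1}-P_2\bigr]$ with $P_2\coloneqq\sum_{j=b}^{a+b-1}(-1)^{j+1}\binom{N}{j+2}\binom{j+1}{b+1}$.

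To finish I would evaluate $P_2$ by the finite-difference principle $\sum_{k=0}^{N}(-1)^k\binom{N}{k}P(k)=0$ for every polynomial $P$ of degree $<N$, applied to $P(k)=\binom{k-1}{b+1}$; correcting for the term $k=0$ (where the polynomial value is $(-1)^{b+1}$) and for the two terms $k=N-1,N$ lying above the summation range yields $P_2=N\binom{N-2}{b+1}-\binom{N-1}{b+1}-(-1)^b$. Substituting this and using $(-1)^b=(-1)^a$, the stray sign produces precisely the constant $1$ in the claimed value, and after rewriting $\binom{N-1}{a}=\binom{N-1}{b+2}$ and $\binom{N-2}{a-1}=\binom{N-2}{b+2}$ (valid since $a=N-b-3$) the remaining pure-binomial identity
\[
N\binom{N-2}{b+1}+\binom{N-1}{b+1}-(b+2)\binom{N-1}{b+2}+\binom{N-2}{b+2}=\binom{N}{b+2}
\]
follows from the absorption identity $(b+2)\binom{N-1}{b+2}=(N-1)\binom{N-2}{b+1}$ and two applications of Pascal's rule. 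The main obstacle is the main-part computation: keeping the restricted-range Vandermonde convolutions and their boundary terms in step so that exactly the right pieces cancel, and then handling the residual sum $P_2$, where the vanishing supplied by finite differences must be corrected by out-of-range terms and where the hypothesis that $a+b$ is even (equivalently $N$ odd) is indispensable.
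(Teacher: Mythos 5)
Your proposal is correct: I checked each stated intermediate value (the correction total $(-1)^a\bigl[-b\binom{N-1}{a}-2\binom{N-1}{b+2}+\binom{N-2}{a-1}\bigr]$ with $N=a+b+3$, the surviving main part $(-1)^a\bigl[2N\binom{N-2}{b+1}-P_2\bigr]$, the value $P_2=N\binom{N-2}{b+1}-\binom{N-1}{b+1}-(-1)^b$, and the closing binomial identity) and they all agree with what the decomposition actually yields. Your split of $C$ into main and correction parts is exactly the paper's six-piece decomposition $\mathrm{I},\dots,\mathrm{VI}$, and your evaluation of the corrections (Vandermonde, hockey-stick, partial alternating sums) matches the paper's treatment of $\mathrm{II}$, $\mathrm{IV}$, $\mathrm{VI}$. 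Where you genuinely diverge is on the main parts: the paper evaluates each of $\mathrm{I}$, $\mathrm{III}$, $\mathrm{V}$ separately in closed form by repeated use of the partial-fraction identity $\sum_{k=0}^n(-1)^k\binom{n}{k}\frac{1}{x+k}=\frac{n!}{x(x+1)\cdots(x+n)}$ (with $\mathrm{I}=0$ emerging only after a page of cancellation), whereas you regroup all three by the common index $j=s+m$, let the $\binom{a+b-j}{b+1}$-contributions cancel \emph{across} the three sums before anything is evaluated, kill the full-convolution piece via the alternating partial row sum of $\binom{N}{\cdot}$ (using that $N$ is odd), and dispatch the single surviving sum $P_2$ with the finite-difference vanishing $\sum_{k=0}^N(-1)^k\binom{N}{k}P(k)=0$ for $\deg P<N$; this buys a structural explanation of the cancellation rather than three independent closed-form miracles. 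One imprecision in your prose: for the third sum, the inner sum at fixed $j$ is \emph{not} a boundary-corrected Vandermonde, because the residual sign $(-1)^s$ survives there; instead $r=a-1-j$ is frozen and the sum collapses via your partial alternating-sum identity to the single term $(-1)^j\binom{a+b-j}{b+1}$ --- which is precisely the quantity your cancellation requires, so the bookkeeping is right, but the mechanism should be stated as the alternating-sum identity rather than a restricted Vandermonde.
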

We divide $C$ into six parts. Set
\begin{align*}
\text{I}&\coloneqq(-1)^{a+1}\sum_{\substack{l+m=b-1 \\ l, m\geq0}}\sum_{\substack{r+s=a \\ r, s\geq0}}(-1)^{s+m}\binom{r+l+1}{r}\binom{s+m+1}{s}\binom{a+b+3}{s+m+2},\\
\text{II}&\coloneqq(-1)^{a+1}\sum_{\substack{l+m=b-1 \\ l, m\geq0}}\sum_{\substack{r+s=a \\ r, s\geq0}}\binom{r+l+1}{r}\binom{s+m+1}{s}, \\
\text{III}&\coloneqq2(-1)^a\sum_{\substack{r+s=a \\ r, s\geq0}}(-1)^s\binom{r+b+1}{r}\binom{a+b+3}{s+1},\\
\text{IV}&\coloneqq2(-1)^{a+1}\sum_{\substack{r+s=a \\ r, s\geq0}}\binom{r+b+1}{r},\\
\text{V}&\coloneqq\sum_{\substack{m+n=a-1 \\ m, n\geq0}}(-1)^n\sum_{\substack{r+s=n \\ r, s\geq0}}(-1)^{s+m+1}\binom{r+b+1}{r}\binom{s+m+1}{s}\binom{a+b+3}{s+m+2},\\
\text{VI}&\coloneqq\sum_{\substack{m+n=a-1 \\ m, n\geq0}}(-1)^{n+1}\sum_{\substack{r+s=n \\ r, s\geq0}}\binom{r+b+1}{r}\binom{s+m+1}{s}.
\end{align*}
Note that we easily obtain
\begin{align}\label{eq: IV}
\text{IV}=2(-1)^{a+1}\binom{a+b+2}{a}.
\end{align}
By the definition of negative binomial coefficients and the Chu--Vandermonde identity, we also have
\begin{align}\label{eq: II and VI}
\text{II}=(-1)^{a+1}b\binom{a+b+2}{a}, 
\qquad \text{VI}=(-1)^a\binom{a+b+1}{a-1}.
\end{align}

Next, we calculate I, III and V.  We use the following equality repeatedly: 
\begin{align}\label{eq: PFD}
  \sum_{k=0}^n(-1)^k\binom{n}{k}\frac{1}{x+k}=\frac{n!}{x(x+1)\cdots(x+n)}.
\end{align}
Here, $n$ is a non-negative integer and $x$ is an indeterminate.
\begin{lemma}\label{I}
    \begin{align*}
    \text{I}=0.
    \end{align*}
\end{lemma}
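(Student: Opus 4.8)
The plan is to prove $\text{I}=0$ by exhibiting a sign-reversing involution on the index set, using the standing hypothesis (inherited from Theorem~\ref{S2 1 2 1}) that $a+b$ is even. First I would eliminate the constrained summation variables by writing $r=a-s$ and $l=b-1-m$; then $\text{I}=(-1)^{a+1}\sum_{s,m}f(s,m)$, where
\[
f(s,m)\coloneqq(-1)^{s+m}\binom{a+b-s-m}{a-s}\binom{s+m+1}{s}\binom{a+b+3}{s+m+2}
\]
and the sum ranges over the rectangle $0\le s\le a$, $0\le m\le b-1$. (If $b=0$ the rectangle is empty and $\text{I}=0$ trivially.)

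Next I would introduce the involution $\iota(s,m)=(a-s,\,b-1-m)$, which maps the rectangle bijectively onto itself. The point of the above rewriting is that $\iota$ interchanges the two mixed binomials: a direct check gives $a+b-(a-s)-(b-1-m)=s+m+1$ and $(a-s)+(b-1-m)+1=a+b-s-m$, so that $\binom{a+b-s-m}{a-s}$ and $\binom{s+m+1}{s}$ are swapped. Meanwhile the outer binomial is $\iota$-invariant, since $s+m+2$ and $(a+b+1)-(s+m)$ are complementary arguments in $\binom{a+b+3}{\,\cdot\,}$. As the product of the two mixed binomials is unaffected by the swap, the only net effect of $\iota$ is the sign $(-1)^{(a-s)+(b-1-m)}=(-1)^{a+b-1}(-1)^{s+m}$; that is, $f(\iota(s,m))=(-1)^{a+b-1}f(s,m)$.

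Re-indexing the sum through $\iota$ then yields $\sum_{s,m}f(s,m)=(-1)^{a+b-1}\sum_{s,m}f(s,m)$, hence $\text{I}=(-1)^{a+b-1}\text{I}$. Since $a+b$ is even we have $(-1)^{a+b-1}=-1$, so $\text{I}=-\text{I}$ and therefore $\text{I}=0$.

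I expect the only delicate point to be the bookkeeping in the second paragraph: one must confirm that $\iota$ preserves the rectangle and that each of the three binomial factors transforms exactly as claimed, in particular that the large binomial is truly invariant. I would emphasize that the parity hypothesis is essential rather than cosmetic: a direct evaluation (for instance $a=2$, $b=1$) gives $\text{I}\neq0$ when $a+b$ is odd, so any correct argument must use it. A more computational route would expand $\binom{a+b+3}{s+m+2}$ and collapse the inner sum over $s$ via the partial-fraction identity~\eqref{eq: PFD}, but the involution makes this unnecessary.
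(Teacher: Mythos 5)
Your proof is correct, and it takes a genuinely different route from the paper. The paper's proof of Lemma~\ref{I} rewrites the double sum via the partial fraction identity~\eqref{eq: PFD}, decomposes the inner sum over $m$ into several telescoping pieces, and then evaluates six resulting single sums whose contributions cancel pairwise using the parity of $a+b$; it is a page of careful bookkeeping. Your argument replaces all of this with the sign-reversing involution $\iota(s,m)=(a-s,\,b-1-m)$ on the rectangle $0\le s\le a$, $0\le m\le b-1$. I checked the three transformation claims: $\binom{a+b-s-m}{a-s}$ and $\binom{s+m+1}{s}$ are indeed swapped, $\binom{a+b+3}{s+m+2}$ is fixed because $s+m+2$ and $a+b+1-s-m$ are complementary in $\binom{a+b+3}{\cdot}$, and the sign transforms by $(-1)^{a+b-1}$, so $\text{I}=(-1)^{a+b-1}\text{I}=-\text{I}$ when $a+b$ is even. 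Your use of the parity hypothesis is legitimate and matches the paper, whose own proof also closes with ``Since $a+b$ is even, we have the conclusion''; your numerical check that $\text{I}\neq0$ for $a=2$, $b=1$ confirms the hypothesis cannot be dropped. What each approach buys: yours is shorter, conceptual, and exposes the symmetry responsible for the vanishing; the paper's computational template has the advantage of applying uniformly to the non-vanishing sums III and V of the appendix, where no such involution is available.
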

\begin{proof}
By elementary calculations, we have
\begin{equation}\label{first calc of I}
\begin{split}
  \text{I}&=(-1)^{a+1}\sum_{m=0}^{b-1}\sum_{s=0}^a(-1)^{s+m}\binom{a+b-m-s}{b-m}\binom{s+m+1}{s}\binom{a+b+3}{s+m+2}\\           
  &=(-1)^{a+1}(a+1)\binom{a+b+2}{a+1}\sum_{s=0}^a(-1)^s\binom{a}{s}  \\
                & \quad \times\sum_{m=0}^{b-1}(-1)^m\binom{b+1}{m+1}\left(\frac{1}{a+b+1-s-m}+\frac{1}{s+m+2}\right).
\end{split}
\end{equation}
Applying \eqref{eq: PFD} with $x=-(a+b+2-s)$ and $x=s+1$, we have
\begin{equation}\label{second calc of I}
\begin{split}
  &\sum_{m=0}^{b-1}(-1)^m\binom{b+1}{m+1}\left(\frac{1}{a+b+1-s-m}+\frac{1}{s+m+2}\right)\\
  &=\sum_{m=0}^{b+1}(-1)^{m-1}\binom{b+1}{m}\left(\frac{1}{a+b+2-s-m}+\frac{1}{s+m+1}\right) \\
  &\quad+\left(\frac{1}{a+b+2-s}+\frac{1}{s+1}\right)+(-1)^{b+1}\left(\frac{1}{a+1-s}+\frac{1}{s+b+2}\right) \\
  &=\frac{(-1)^b}{a+b+2-s}\binom{a+b+1-s}{b+1}^{-1}-\frac{1}{s+1}\binom{b+s+2}{b+1}^{-1}\\
  &\quad+\frac{1}{a+b+2-s}+\frac{1}{s+1} +\frac{(-1)^{b+1}}{a+1-s}+\frac{(-1)^{b+1}}{s+b+2}. 
\end{split}
\end{equation}
By substituting \eqref{second calc of I} into \eqref{first calc of I} and then using \eqref{eq: PFD} again, we have
\begin{align*}
&(-1)^{a+b+1}\sum_{s=0}^a(-1)^s\binom{a+b+2}{s}=(-1)^{b+1}\binom{a+b+1}{a},\\
&(-1)^a\sum_{s=0}^a(-1)^s\binom{a+b+2}{a-s}=\sum_{s=0}^a(-1)^s\binom{a+b+2}{s}=(-1)^a\binom{a+b+1}{a},\\
&(-1)^{a+1}(a+1)\binom{a+b+2}{a+1}\sum_{s=0}^a(-1)^s\binom{a}{s}\frac{1}{a+b+2-s}=-1,\\
&(-1)^{a+1}(a+1)\binom{a+b+2}{a+1}\sum_{s=0}^a(-1)^s\binom{a}{s}\frac{1}{s+1}=(-1)^{a+1}\binom{a+b+2}{a+1},\\
&(-1)^{a+b}(a+1)\binom{a+b+2}{a+1}\sum_{s=0}^a(-1)^s\binom{a}{s}\frac{1}{a+1-s}=(-1)^b\binom{a+b+2}{a+1},\\
&(-1)^{a+b}(a+1)\binom{a+b+2}{a+1}\sum_{s=0}^a(-1)^s\binom{a}{s}\frac{1}{s+b+2}=(-1)^{a+b}.
\end{align*}
Since $a+b$ is even, we have the conclusion.
\end{proof}
\begin{lemma}\label{lem:III}
\[
  \text{III}=2+2(-1)^a\binom{a+b+2}{a+1}.
\]
\end{lemma}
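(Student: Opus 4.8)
The plan is to evaluate the single alternating sum $\text{III}$ directly by recognizing it as a truncated alternating Vandermonde convolution and then correcting for the missing boundary terms. First I would eliminate the redundant summation index: since $r+s=a$, I set $r=a-s$ and use $\binom{r+b+1}{r}=\binom{a+b+1-s}{b+1}$ to rewrite
\[
\text{III}=2(-1)^a\sum_{s=0}^{a}(-1)^s\binom{a+b+1-s}{b+1}\binom{a+b+3}{s+1}.
\]
Writing $S$ for the sum on the right, so that $\text{III}=2(-1)^aS$, the substitution $u=s+1$ turns $S$ into $-\sum_{u=1}^{a+1}(-1)^u\binom{a+b+3}{u}\binom{a+b+2-u}{b+1}$, which is the tail of a convolution of the shape $\sum_u(-1)^u\binom{N}{u}\binom{M-u}{k}$ with $N=a+b+3$, $M=a+b+2$ and $k=b+1$.

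Next I would complete this to a full-range convolution. The terms with $a+1<u\le a+b+2$ contribute nothing, since $\binom{a+b+2-u}{b+1}=0$ there, and restoring the $u=0$ term $\binom{a+b+2}{b+1}$ gives
\[
\binom{a+b+2}{b+1}-S=\sum_{u=0}^{a+b+2}(-1)^u\binom{a+b+3}{u}\binom{a+b+2-u}{b+1};
\]
call the right-hand side $T$. To evaluate $T$ I would pass to the full range $0\le u\le a+b+3$: interpreting $\binom{a+b+2-u}{b+1}$ as $[y^{b+1}](1+y)^{a+b+2-u}$ (as a formal power series, so that the $u=a+b+3$ term is handled correctly), the complete sum equals the coefficient of $y^{b+1}$ in $(1+y)^{a+b+2}\bigl(1-\tfrac{1}{1+y}\bigr)^{a+b+3}=y^{a+b+3}(1+y)^{-1}$, which is $0$ because $b+1<a+b+3$. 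Hence $T$ is minus the single omitted term at $u=a+b+3$, namely $-(-1)^{a+b+3}\binom{-1}{b+1}=-(-1)^{a+b+3}(-1)^{b+1}=(-1)^{a+1}$.

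Combining these gives $S=\binom{a+b+2}{b+1}-(-1)^{a+1}=\binom{a+b+2}{b+1}+(-1)^a$, and then $\text{III}=2(-1)^aS=2+2(-1)^a\binom{a+b+2}{b+1}$; using the symmetry $\binom{a+b+2}{b+1}=\binom{a+b+2}{a+1}$ yields the claimed value $\text{III}=2+2(-1)^a\binom{a+b+2}{a+1}$. I expect the one genuinely delicate point to be the bookkeeping of the summation range, namely verifying that precisely the boundary contributions at $u=0$ and $u=a+b+3$ account for the discrepancy between $S$ and the closed-form convolution while all interior tail terms vanish identically; everything else is routine. Finally, note that this route needs no parity hypothesis on $a+b$; if one instead prefers to stay within the partial-fraction toolkit used for Lemma~\ref{I}, the same value of $S$ can be recovered from \eqref{eq: PFD}, at the cost of a longer computation.
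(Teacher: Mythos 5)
Your proposal is correct, and it reaches the stated value by a genuinely different route from the paper. The paper evaluates $\text{III}$ with the partial-fraction identity \eqref{eq: PFD}: it rewrites the summand so that the $s$-dependence sits in a factor $\frac{1}{a+b+3-s}$ against $\binom{a+1}{s}$, applies \eqref{eq: PFD} once (after adjoining the $s=a+1$ term), and reads off the closed form in three lines. You instead shift the index, complete the truncated sum to a full Vandermonde-type convolution $\sum_{u=0}^{a+b+3}(-1)^u\binom{a+b+3}{u}\binom{a+b+2-u}{b+1}$, evaluate that as the coefficient of $y^{b+1}$ in $y^{a+b+3}(1+y)^{-1}$ (which vanishes), and recover $S$ from the two boundary corrections at $u=0$ and $u=a+b+3$. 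I checked the delicate bookkeeping: the interior tail terms $a+1<u\le a+b+2$ do vanish since $0\le a+b+2-u\le b<b+1$, and the $u=a+b+3$ term is $(-1)^{a+b+3}\binom{-1}{b+1}=(-1)^{a}$ under the formal power series convention, consistent with your computation $T=(-1)^{a+1}$ and hence $S=\binom{a+b+2}{b+1}+(-1)^a$; small cases (e.g.\ $a=b=0$ giving $\text{III}=6$, and $a=1$, $b=0$ giving $\text{III}=-4$) confirm the final formula. What each approach buys: the paper's PFD computation is shorter and reuses the single tool \eqref{eq: PFD} that also handles the sums $\text{I}$ and $\text{V}$, keeping the appendix uniform; your convolution argument is self-contained, makes transparent exactly where the two terms $2$ and $2(-1)^a\binom{a+b+2}{a+1}$ come from (the two boundary terms), and, as you note, uses no parity hypothesis on $a+b$ --- though in fact the paper's computation of $\text{III}$ does not use parity either (parity only enters in Lemma~\ref{I}).
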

\begin{proof}
By \eqref{eq: PFD}, we have
\begin{align*}
  \text{III}
     &=2(-1)^a\sum_{s=0}^a(-1)^s\binom{a+b+1-s}{a-s}\binom{a+b+3}{s+1}\\
     &=2(-1)^{a-1}(a+b+3)\binom{a+b+2}{a+1}\left\{\sum_{s=0}^{a+1} (-1)^s\binom{a+1}{s}\frac{1}{a+b+3-s}-\frac{1}{a+b+3}\right\}\\
     &=2(-1)^{a-1}(a+b+3)\binom{a+b+2}{a+1}\left\{\frac{(-1)^{a-1}}{a+b+3}\binom{a+b+2}{a+1}^{-1}-\frac{1}{a+b+3}\right\}\\
     &=2+2(-1)^a\binom{a+b+2}{a+1},
  \end{align*}
which completes the proof.
\end{proof}
\begin{lemma}\label{V}
  \begin{align*}
  \text{V}=(-1)^aa\binom{a+b+2}{a+1}+(-1)^a\binom{a+b+1}{a}-1.
  \end{align*}
\end{lemma}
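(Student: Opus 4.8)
The plan is to collapse the quadruple sum defining $\text{V}$ into a single alternating sum and then evaluate that sum by a Vandermonde-type identity, isolating a few boundary terms.

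First I would simplify the sign. Writing $n=a-1-m$, the factor $(-1)^n(-1)^{s+m+1}$ becomes $(-1)^{a+s}$, so that
\[
\text{V}=(-1)^a\sum_{\substack{m+n=a-1\\ m,n\geq0}}\sum_{\substack{r+s=n\\ r,s\geq0}}(-1)^s\binom{r+b+1}{r}\binom{s+m+1}{s}\binom{a+b+3}{s+m+2}.
\]
The crucial structural feature of $\text{V}$ (in contrast with $\text{I}$, where partial fractions \eqref{eq: PFD} are forced) is that after substituting $r=a-1-m-s$ the first factor becomes $\binom{r+b+1}{r}=\binom{a+b-m-s}{b+1}$, which depends only on $j\coloneqq s+m$. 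Hence I would re-express the sum over the free indices $s,m\ge0$ with $s+m\le a-1$ and group the terms by $j$.

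Second, for fixed $j$ the only $s$-dependence is in $(-1)^s\binom{j+1}{s}$, and $\sum_{s=0}^{j}(-1)^s\binom{j+1}{s}=(-1)^j$ (a truncation of the vanishing alternating row $\sum_{s=0}^{j+1}(-1)^s\binom{j+1}{s}=0$). After the reindexing $i=j+2$ this produces the single alternating sum
\[
\text{V}=(-1)^a\sum_{i=2}^{a+1}(-1)^i\binom{a+b+2-i}{b+1}\binom{a+b+3}{i}.
\]

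Third — and this is the main point — I would evaluate $\sum_{i=2}^{a+1}$ by comparison with the full-range sum. Since $\binom{a+b+2-i}{b+1}$ is a polynomial in $i$ of degree $b+1<a+b+3$, the identity $\sum_{i=0}^{N}(-1)^i\binom{N}{i}\binom{x-i}{m}=\binom{x-N}{m-N}$ (with $N=a+b+3$, $x=a+b+2$, $m=b+1$) gives $\sum_{i=0}^{a+b+3}(-1)^i\binom{a+b+3}{i}\binom{a+b+2-i}{b+1}=\binom{-1}{-a-2}=0$. The terms with $a+2\le i\le a+b+2$ vanish, so only the boundary contributions at $i=0$, $i=1$, and $i=a+b+3$ survive. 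I expect the bookkeeping at the right endpoint to be the one delicate step: there the generalized binomial $\binom{-1}{b+1}=(-1)^{b+1}$ is \emph{nonzero} and contributes $(-1)^a$, which must not be discarded. Solving the resulting linear relation for the middle block yields
\[
\sum_{i=2}^{a+1}(-1)^i\binom{a+b+2-i}{b+1}\binom{a+b+3}{i}=(a+b+3)\binom{a+b+1}{b+1}-\binom{a+b+2}{b+1}-(-1)^a.
\]
Finally, multiplying by $(-1)^a$, rewriting $\binom{a+b+1}{b+1}=\binom{a+b+1}{a}$ and $\binom{a+b+2}{b+1}=\binom{a+b+2}{a+1}$, and invoking the elementary identity $(a+b+2)\binom{a+b+1}{a}=(a+1)\binom{a+b+2}{a+1}$, I would obtain $\text{V}=(-1)^a a\binom{a+b+2}{a+1}+(-1)^a\binom{a+b+1}{a}-1$, as asserted. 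Note that, unlike in Lemma~\ref{I}, the parity of $a+b$ is never used; the degenerate case $a=0$ is the empty sum and is already covered by the same closed form.
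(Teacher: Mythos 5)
Your proof is correct, and it takes a genuinely different route from the paper's. The paper evaluates $\text{V}$ by fixing $n$, rewriting the product of the three binomial coefficients as $(a+1)\binom{a+b+2}{a+1}\binom{a}{n}\binom{n}{s}\bigl(\frac{1}{a+s+1-n}+\frac{1}{b+2+n-s}\bigr)$, and then applying the partial-fraction identity \eqref{eq: PFD} to each inner sum over $s$ --- the same machinery it uses for Lemmas~\ref{I} and \ref{lem:III}. You instead group the terms by $j=s+m$, exploit the fact that after eliminating $r$ the factor $\binom{r+b+1}{r}=\binom{a+b-j}{b+1}$ depends only on $j$, collapse the inner sum via $\sum_{s=0}^{j}(-1)^s\binom{j+1}{s}=(-1)^j$, and then evaluate the resulting single sum by completing it to the full range $0\le i\le a+b+3$, where the $(a+b+3)$-rd finite difference of the degree-$(b+1)$ polynomial $\binom{a+b+2-i}{b+1}$ vanishes; only the boundary terms at $i=0,1,a+b+3$ survive, and your handling of the generalized binomial $\binom{-1}{b+1}=(-1)^{b+1}$ at the right endpoint is exactly right. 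Your argument avoids \eqref{eq: PFD} entirely and is shorter and more self-contained for this particular lemma (and, as you note, makes visible that the parity of $a+b$ is not needed here), whereas the paper's method has the advantage of treating I, III and V uniformly with a single tool. All intermediate identities check out, including the final simplification via $(a+b+2)\binom{a+b+1}{a}=(a+1)\binom{a+b+2}{a+1}$ and the degenerate case $a=0$.
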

\begin{proof}
Since
  \begin{align*}
  \text{V}
    &=(-1)^a\sum_{n=0}^{a-1}\sum_{s=0}^n(-1)^s\binom{b+n+1-s}{b+1}\binom{a+s-n}{s}\binom{a+b+3}{a+s+1-n}\\
    &=(-1)^a(a+1)\binom{a+b+2}{a+1}\sum_{n=0}^{a-1}\binom{a}{n}\\
    & \quad \times \sum_{s=0}^n(-1)^s\binom{n}{s}\left(\frac{1}{a+s+1-n}+\frac{1}{b+2+n-s}\right)
  \end{align*}
and
  \begin{align*}
  \sum_{s=0}^n(-1)^s\binom{n}{s}\frac{1}{a+s+1-n}&=\frac{1}{a+1-n}\binom{a+1}{n}^{-1},\\
  \sum_{s=0}^n(-1)^s\binom{n}{s}\frac{1}{b+2+n-s}&=\frac{(-1)^n}{b+2+n}\binom{b+n+1}{b+1}^{-1}
  \end{align*}
hold by \eqref{eq: PFD}, we obtain the desired formula.
\end{proof}
\begin{proof}[Proof of Proposition \ref{key binom calc}]
From \eqref{eq: IV}, \eqref{eq: II and VI},  Lemmas \ref{I}, \ref{lem:III}, and \ref{V}, we obtain the desired formula.
\end{proof}
%


\begin{thebibliography}{IKOO}
\bibitem[BB]{BB}
D. Bowman, D. M. Bradley,
\emph{The algebra and combinatorics of shuffles and multiple zeta values},
J. Combin. Theory Ser.\ A \textbf{97} (2002), 43--61.
\bibitem[HHT]{HHT}
KH. Hessami Pilehrood, T. Hessami Pilehrood, R. Tauraso,
\emph{New properties of multiple harmonic sums modulo $p$ and $p$-analogues of Leshchiner's series},
Trans.\ Amer.\ Math.\ Soc.\ \textbf{366} (2014), 3131--3159.
\bibitem[HMO]{HMO}
M. Hirose, H. Murahara, M. Ono,
\emph{On variants of symmetric multiple zeta-star values and the cyclic sum formula},
to appear in Ramanujan J.
\bibitem[H1]{H1}{M. E. Hoffman}, \emph{The algebra of multiple harmonic series}, J. Algebra \textbf{194} (1997), 477--495.
\bibitem[H2]{H2}
M. E. Hoffman, 
\emph{Quasi-symmetric functions and mod $p$ multiple harmonic sums}, 
Kyushu J. Math.\ \textbf{69} (2015), 345--366.
 \bibitem[IKOO]{IKOO}
 K. Ihara, J. Kajikawa, Y. Ohno, J. Okuda, 
 \emph{Multiple zeta values vs. multiple zeta-star values}, 
 J. Algebra \textbf{332} (2011), 187--208.
 \bibitem[IKZ]{IKZ}
 K. Ihara, M. Kaneko, D. Zagier, 
 \emph{Derivation relation and double shuffle relations for multiple zeta values},
 Compositio Math. \textbf{142} (2006), 307--338.
 \bibitem[J1]{J1}
 D. Jarossay, 
 \emph{Double m\'{e}lange des multiz\^{e}tas finis et multiz\^{e}tas sym\'{e}tris\'{e}s},
 C. R. Acad.\ Sci.\ Paris, \textbf{352} (2014), 767--771.
\bibitem[J2]{J2}
D. Jarossay, 
\emph{Adjoint cyclotomic multiple zeta values and cyclotomic multiple harmonic values},
preprint, arXiv:1412.5099v5.
\bibitem[Kan]{Kan}
M.~Kaneko, 
\emph{An introduction to classical and finite multiple zeta values},
Publications math\'{e}matiques de {B}esan\c{c}on, no.\ 1 (2019), 103--129.
\bibitem[KS]{KS}
M. Kaneko, M. Sakata, 
\emph{On multiple zeta values of extremal height}, 
Bull.\ Aust.\ Math.\ Soc.\ \textbf{93} (2016), 186--193.
\bibitem[KZ]{KZ}
M. Kaneko, D. Zagier,
\emph{Finite multiple zeta values},
in preparation.
\bibitem[Li]{Li}
Z. Li,
\emph{Gamma series associated to elements satisfying regularized double shuffle relations}, J. Number Theory \textbf{130} (2010), 213--231.
\bibitem[Mun]{Mun}
S. Muneta, \emph{A note on evaluations of multiple zeta values}, Proc. Amer. Math. Soc. \textbf{137} (2009), 931--935.
\bibitem[Mur]{Mur}
H. Murahara,
\emph{A note on finite real multiple zeta values}, 
Kyushu J. Math., \textbf{70} (2016), 197--204.
\bibitem[MOS]{MOS}
H. Murahara, T. Onozuka, S. Seki,
\emph{Bowman-Bradley type theorem for finite multiple zeta values in $\cA_2$},
Osaka Journal of Mathematics \textbf{57} (2020), 647--653.
\bibitem[OSY]{OSY}
M. Ono, S. Seki, S. Yamamoto,
\emph{Truncated $t$-adic symmetric multiple zeta values and double shuffle relations},
  Res. number theory \textbf{7}, 15 (2021). 
\bibitem[Re]{Re}
C. Reutenauer,
\emph{Free Lie Algebras}, Oxford Science Publications (1993).
\bibitem[Ro]{Ro1}
J. Rosen, \emph{Asymptotic relations for truncated multiple zeta values}, J.~Lond.~Math.~Soc.~(2) \textbf{91} (2015), 554--572.
Int.\ Math.\ Res.\ Notices \textbf{23} (2019), 7379--7405.
\bibitem[SW1]{SW1}
S. Saito, N. Wakabayashi, 
\emph{Sum formula for finite multiple zeta values},
J. Math.\ Soc.\ Japan \textbf{67} (2015), 1069--1076.
\bibitem[SW2]{SW2}
S. Saito, N. Wakabayashi, 
\emph{Bowman-Bradley type theorem for finite multiple zeta values},
Tohoku Math.\ J. (2) \textbf{68} (2016), 241--251.
\bibitem[SS]{SS}
K. Sakugawa, S. Seki,
\emph{On functional equations of finite multiple polylogarithms}, J. Algebra \textbf{469} (2017), 323--357.
\bibitem[Se]{Se2}
S.~Seki,
\emph{The $\pp$-adic duality for the finite star-multiple polylogarithms}, Tohoku Math.\ J.~\textbf{71} (2019), 111--122.
\bibitem[SY]{SY}
S.~Seki, S.~Yamamoto,
\emph{Ohno-type identities for multiple harmonic sum},
J. Math.\ Soc.\ Japan \textbf{72} (2020), 673--686.
\bibitem[Su]{Su}
Z. H. Sun,
\emph{Congruences concerning Bernoulli numbers and Bernoulli polynomials},
Disc.\ Appl.\ Math.\ \textbf{105} (2000), 193--223.
\bibitem[TT]{TT}
Y.~Takeyama, K.~Tasaka,
\emph{Supercongruences of multiple harmonic $q$-sums and generalized finite/symmetric multiple zeta values},
preprint, arXiv:2012.07067.
\bibitem[TY]{TY}
K.~Tasaka, S.~Yamamoto,
\emph{On some multiple zeta-star values of one-two-three indices}, Int.\ J. Number Theory \textbf{9} (2013), 1171--1184.
\bibitem[Tau]{Tau}
R.~Tauraso,
\emph{More congruences for central binomial coefficients}, J. Number Theory \textbf{130} (2010), 2639--2649.
\bibitem[W]{W}
L.~C.~Washington,
\emph{$p$-adic $L$-functions and sums of powers},
J. Number Theory \textbf{69} (1998), 50--61.
\bibitem[Y]{Y}
S.~Yamamoto,
\emph{Explicit evaluation of certain sums of multiple zeta-star values}, Funct.\ Approx.\ Comment.\ Math.\ \textbf{49} (2) (2013), 283--289.
\bibitem[Z]{Z}
D.~Zagier,
\emph{Evaluation of the multiple zeta values $\zeta(2, \ldots, 2, 3, 2, \ldots, 2)$}, Ann.~of Math.~\textbf{175} (2012), 977--1000.
\bibitem[ZC]{ZC}
X.~Zhou, T.~Cai,
\emph{A generalization of a curious congruence on harmonic sums},
Proc.\ of Amer.\ Math.\ Soc.\ \textbf{135} (2007), no. 5, 1329--1333.
\bibitem[Zh]{Zh}
J.~Zhao,
\emph{Wolstenholme type theorem for multiple harmonic sums},
Int.\ J. Number Theory \textbf{4} (2008), 73--106.
\end{thebibliography}
\end{document}